\newtheorem{theorem}{Theorem}[section]
\newif\ifskip
\newtheorem{corollary}[theorem]{Corollary}
\newtheorem{proposition}[theorem]{Proposition}
\newtheorem{definition}{Definition}
\newtheorem{definitions}[definition]{Definitions}
\newtheorem{observation}[theorem]{Observation}
\newtheorem{example}{Example}[section]
\newtheorem{remark}[theorem]{Remark}
\newtheorem{Problem}{Problem}
\newtheorem{problem}[Problem]{Problem}
\newtheorem{lemma}[theorem]{Lemma}
\theoremstyle{plain}
\newtheorem{Definition}{Definition}
\newtheorem{Problem}[Definition]{Problem}
\theoremstyle{break}
\newtheorem{definition}[Definition]{Definition}
\newtheorem{lemma}[Definition]{Lemma}
\newtheorem{theorem}[Definition]{Theorem}
\newtheorem{proposition}[Definition]{Proposition}
\newtheorem{remark}[Definition]{Remark}
\newtheorem{corollary}[Definition]{Corollary}
\newtheorem{example}[Definition]{Example}
\newtheorem{problem}[Definition]{Problem}
\newtheorem{observation}[Definition]{Observation}
\newif\ifskip
\newif\ifrevised
\begin{document}
\begin{frontmatter}
\title{On Sequences of Polynomials \\ Arising from Graph Invariants
}

\author[tk]{T.~Kotek\fnref{fn0}}
\ead{kotek@forsyte.at}
\ead[url]{http://forsyte.at/people/kotek}

\author[jam]{J.A.~Makowsky\corref{cor1}\fnref{fn1}}
\ead{janos@cs.technion.ac.il}
\ead[url]{http://www.cs.technion.ac.il/~janos}

\author[evr]{E.V.~Ravve\fnref{fn2}}
\ead{cselena@braude.ac.il}
\cortext[cor1]{Corresponding author}
\fntext[fn0]{Work done in part while the author was visiting the Simons Institute
for the Theory of Computing in Fall 2016.}
\fntext[fn1]{Work done in part while the author was visiting the Simons Institute
for the Theory of Computing in Spring and Fall 2016.}
\fntext[fn2]{Visiting scholar at the Faculty of Computer Science, Technion--IIT, Haifa, Israel}
\address[tk]{
Institut f\"ur Informationssysteme, Technische Universit\"at Wien Vienna, Austria}
\address[jam]{Department of Computer Science, Technion--IIT, Haifa, Israel}
\address[evr]{Department of Software Engineering, ORT-Braude College, Karmiel, Israel}
\begin{abstract}
Graph polynomials are deemed useful if they give rise to algebraic characterizations of various graph properties,
and their evaluations encode many other graph invariants. 
Algebraic:
The complete graphs $K_n$ and the complete bipartite graphs $K_{n,n}$  
can be characterized as those graphs whose matching polynomials
satisfy a certain recurrence relations and are related to the Hermite and Laguerre polynomials.
An encoded graph invariant: The absolute value of the 
chromatic polynomial $\chi(G,X)$ of a graph $G$ evaluated at $-1$ counts the number of acyclic orientations of $G$.

In this paper we prove a general theorem on graph families
which are characterized by families of polynomials satisfying linear recurrence relations.
This gives infinitely many instances similar to the characterization of $K_{n,n}$.
We also show where to use, instead of
the Hermite and Laguerre polynomials, 
linear recurrence relations
where the coefficients do not depend on $n$.

Finally, we discuss the distinctive power of graph polynomials in specific form.

\end{abstract}
\begin{keyword}
Graph polynomials \sep  Chromatic Polynomial\sep  Orthogonal polynomials
\end{keyword}

\end{frontmatter}
\newpage
\tableofcontents
\newpage
\newif\ifsubmit
\submittrue
\newcommand{\WFF}{\mathbf{WFF}}
\newcommand{\MSOL}{\mathbf{MSOL}}
\newcommand{\SOL}{\mathbf{SOL}}
\newcommand{\SOLEVAL}{\mathbf{SOLEVAL}}
\newcommand{\CMSOL}{\mathbf{CMSOL}}
\newcommand{\SEN}{\mbox{\bf SEN}}
\newcommand{\WFTF}{\mbox{\bf WFTF}}
\newcommand{\FOL}{\mbox{\bf FOL}}
\newcommand{\TFOF}{\mbox{\bf TFOF}}
\newcommand{\TFOL}{\mbox{\bf TFOL}}
\newcommand{\FOF}{\mbox{\bf FOF}}
\newcommand{\NNF}{\mbox{\bf NNF}}
\newcommand{\N}{{\mathbb N}}
\newcommand{\bN}{{\mathbb N}}
\newcommand{\bR}{{\mathbb R}}
\newcommand{\HF}{\mbox{\bf HF}}
\newcommand{\CNF}{\mbox{\bf CNF}}
\newcommand{\PNF}{\mbox{\bf PNF}}
\newcommand{\QF}{\mbox{\bf QF}}
\newcommand{\DNF}{\mbox{\bf DNF}}
\newcommand{\DISJ}{\mbox{\bf DISJ}}
\newcommand{\CONJ}{\mbox{\bf CONJ}}
\newcommand{\Ass}{\mbox{Ass}}
\newcommand{\Var}{\mbox{Var}}
\newcommand{\Support}{\mbox{Support}}
\newcommand{\V}{\mbox{\bf Var}}
\newcommand{\fA}{{\mathfrak A}}
\newcommand{\fB}{{\mathfrak B}}
\newcommand{\fN}{{\mathfrak N}}
\newcommand{\fZ}{{\mathfrak Z}}
\newcommand{\fQ}{{\mathfrak Q}}
\newcommand{\Aa}{{\mathfrak A}}
\newcommand{\Bb}{{\mathfrak B}}
\newcommand{\Cc}{{\mathfrak C}}
\newcommand{\Gg}{{\mathfrak G}}
\newcommand{\Ww}{{\mathfrak W}}
\newcommand{\Rr}{{\mathfrak R}}
\newcommand{\Nn}{{\mathfrak N}}
\newcommand{\Zz}{{\mathfrak Z}}
\newcommand{\Qq}{{\mathfrak Q}}
\newcommand{\F}{{\mathbf F}}
\newcommand{\T}{{\mathbf T}}
\newcommand{\Z}{{\mathbb Z}}
\newcommand{\R}{{\mathbb R}}
\newcommand{\C}{{\mathbb C}}
\newcommand{\Q}{{\mathbb Q}}
\newcommand{\cC}{{\mathcal C}}
\newcommand{\cD}{{\mathcal D}}
\newcommand{\cG}{{\mathcal G}}
\newcommand{\cR}{{\mathcal R}}
\newcommand{\cS}{{\mathcal S}}
\newcommand{\cY}{{\mathcal Y}}
\newcommand{\bP}{{\mathbf P}}
\newcommand{\bQ}{{\mathbf Q}}
\newcommand{\bNP}{{\mathbf{NP}}}
\newcommand{\MT}{\mbox{MT}}
\newcommand{\TT}{\mbox{TT}}
\newcommand{\bX}{\bar{X}}
\newcommand{\bY}{\bar{Y}}
\newcommand{\FPT}{{\mathbf FPT}}
\newcommand{\PTime}{{\mathbf PTime}}

\newcommand{\DU}{\mathrm{DU}}

\ifsubmit\else
\begin{framed}
\noindent
File: NREV-intro
\\
Label:  se:intro
\end{framed}
\fi 
\ \hfill In memoriam Herbert Wilf, June 13, 1931 -  January 7, 2012
\section{Introduction and background}
\label{se:intro}
\subsection{Wilf's Recognition problem}
H. Wilf asked in \cite{ar:Wilf1973} to characterize and recognize the instances of the chromatic polynomial.
C.D. Godsil and I. Gutman \cite{ar:GodsilGutman1981} gave a characterization of the
instances of the defect matching polynomial $\mu(G;X)$ for paths $P_n$, cycles $C_n$, complete graphs $K_n$
and bipartite complete graphs $K_{n,n}$ in terms of orthogonal polynomials.
We want to put Wilf's question and C.D. Godsil and I. Gutman's observation into a larger perspective.
First we have to fix some terminology.
Let $\cG$ denote the class of all finite graphs with no multiple edges.
A graph property is a class of graphs $\cC \subseteq \cG$ closed under graph isomorphism.
A graph parameter $f(G)$ is a function $\cG \rightarrow \Z$ invariant under graph isomorphism.
A graph polynomial with $r$ indeterminates\footnote{If the polynomial is univariate, we write $X$ instead of $\bX$}
$\bX = (X_1, \ldots , X_r)$ 
is a function $\bP$ from all finite graphs
into the polynomial ring $\Z[\bX]$ which is invariant under graph isomorphism.
We write $\bP(G; \bX)$ for the polynomial associated with the graph $G$.

\begin{definition}
\label{def:computable}
A graph polynomial
{\em $\bP$ is computable} if 
\begin{enumerate}[(i)]
\item
$\bP$ is a Turing computable function,
and additionally, 
\item
the range of $\bP$, the set
$$
\{ p(\bX) \in \Z[\bX] : \mbox{ there is a graph  } G \mbox{  with  } \bP(G;\bX) = p(\bX) \}
$$
is Turing decidable.
\end{enumerate}
\end{definition}

In this paper we give a general formulation to Wilf's question.

\begin{problem}[Recognition and Characterization Problem:]
Given a graph polynomial $\bP(G;\bX)$ and a graph property $\cC$, define
$$
\cY_{\bP, \cC} = \{ p(\bX) \in \Z[\bX] : \exists G \in \cC \mbox{  with  } \bP(G;\bX) = p(\bX) \}
$$
\begin{enumerate}[(i)]
\item
The {\em recognition problem} asks for an algebraic method to decide membership
in $\cY_{\bP, \cC}$.
\item
The {\em characterization problem}
asks for
an {\em algebraic characterization of $\cY_{\bP, \cC}$}, i.e., an algebraic characterization 
of the coefficients of $p(\bX)$.
\end{enumerate}
\end{problem}
Both the recognition and the characterization problem
were stated explicitly for the chromatic polynomial $\chi(G;X)$ and $\cC$ 
the class of all finite graphs by H. Wilf, \cite{ar:Wilf1973}, 
and he deemed them to be very difficult. 

When H. Wilf asked the question about the chromatic polynomial he had an algebraic and descriptive answer
in mind. 
Something like, a polynomial $p(X)$ is a chromatic polynomial of a some graph $G$ iff
the coefficients satisfy some relations. The conjecture, that the absolute values of the coefficients
of the chromatic polynomial
form a unimodal sequence, only recently proved by  J. Huh, \cite{ar:Huh2015} has its origin in Wilf's question.
H. Wilf was not concerned about algorithmic complexity. 

From a complexity point of view, we note that deciding whether a given polynomial $p(X)$ is a chromatic polynomial of 
a some graph $G$ can be decided by brute force in exponential time as follows:
\begin{enumerate}[(i)]
\item
Use the degree $d_p$ of $p(X)$ to determine the upper bound on the size of the candidate graph $G$.
In the case of the chromatic polynomial we have $|V(G)| = d_p$.
\item
Let $I(n)$ be the number of graphs, up to isomorphism, of order $n$.
Listing all graphs, up to isomorphism, of order $n$, 
is exponential in $n$.
\item
For $i \leq I(d_p)$ compute the chromatic polynomial $\chi(G_i; X)$ and test if $p(X) = \chi(G_i; X)$.
Evaluating $\chi(G_i; X)$ for $X=a$ and  $a \in \N$ is  in $\sharp\bP$.
\end{enumerate}
The same argument works for many  other graph polynomials. 

\begin{problem}[Algorithmic version of Wilf's problem:]
Given a graph polynomial $\bP$ and a graph property $\cC$,
determine the complexity of the recognition problem for $\cY_{\bP, \cC}$.
\end{problem}

One can view  the result of
C.D. Godsil and I. Gutman, \cite{ar:GodsilGutman1981} as a solution of a very special case
of Wilf's  Characterization Problem, where $\bP = \mu(G;X)$ is the matching polynomial, and $\cC$ is
the the indexed family of
$P_n$, cycles $C_n$, complete graphs $K_n$
and bipartite complete graphs $K_{n,n}$.
The solution to the Recognition Problem is then given by verifying that the polynomial $p(X)$ in question
satisfies a recurrence relation.
We shall discuss this and generalizations thereof in Section \ref{se:recur}.

In this paper we are interested in two questions:

\begin{description}
\item[({\bf A}):]
How can we get solutions to Wilf's Characterization Problem for a general class of graph polynomials?
\item[({\bf B}):]
Given such a solution, what does it say about the underlying graphs?
\end{description}

Using classical results and a general theorem from \cite{ar:FischerMakowsky08},
this paper gives solutions to ({\bf A}) similar to the characterization  of
C.D. Godsil and I. Gutman, \cite{ar:GodsilGutman1981}, 
for a large class of graph polynomials
and indexed families of graphs $G_n$
by replacing, in many cases, the orthogonal polynomials by polynomials given by other 
linear recurrence relations with constant
coefficients.
We shall see in Section \ref{se:abstract-theorem} how to formulate a meta-theorem which captures 
many cases for special classes of graphs.

\subsection{Algebraic vs graph theoretic properties of graph polynomials}
As for ({\bf B}), the answer depends on the
particular way the graph polynomial is represented.
The situation is comparable to linear algebra, with matrices and linear maps $f: V \rightarrow W$
between vector spaces $V$ and $W$.
If we choose bases in $V$ and $W$, we can associate with $f$ a matrix $M_f$ representing $f$.
Two similar matrices represent the same linear map in terms of different choices of bases.
As every matrix is similar to a triangular matrix, triangularity is a property of the matrix $M_f$
and not of the linear map $f$.
However, $\det(M_f) = 0$ is both a property of $M_f$ and of the linear map:
$f$ is singular iff $\det(M_f)=0$ iff $\det(M)=0$ for every matrix $M$ similar to $M_f$.

Let $g_n(X)$ be a polynomial basis of $\Z[X]$.
A graph polynomial is always written in the form
$$
P(G;X) = \sum_i a_i(G) \cdot g_i(X)
$$
where the coefficients are graph parameters. 
The graph polynomial $P(G;X)$ defines an equivalence relation on the the class of finite graphs:
Two graphs $G_1, G_2$ are $P$-equivalent iff  $P(G_1;X)= P(G_2;X)$.
The various equivalence relations induced by a graph polynomial $P(G;X)$ are partially ordered by the refinement relation.
In analogy to {\em similarity of matrices}, we say that two graph polynomials have the {\em same distinctive power},
or are {\em d.p.-equivalent},
if they induce the same equivalence relation on graphs with the same number of vertices, edges and connected components.
A property of a graph polynomial is a {\em semantic (aka graph theoretic) property} if it is invariant under d.p.-equivalence.
Otherwise it is a property of the representation, i.e., the choice of the polynomial bases,
and we speak of {\em syntactic (aka algebraic)} properties of the graph polynomial.
A more detailed treatment is given in Section \ref{se:gp}.
Semantic properties of a graph polynomial $\bP$ cannot be expressed in terms of 
algebraic properties of $\cY_{P, \cC}$ alone without relating to the particular form of $\bP$.
For a thorough discussion of this, cf. \cite{ar:MakowskyRavveBlanchard2014}.
There we argued that determining the location of the roots of a graph polynomial is not a semantic property
of graph polynomials. We showed that every graph polynomial can be transformed with mild transformations
into a d.p.-equivalent graph polynomial with roots almost wherever we want them to be. 

Here, in contrast to \cite{ar:MakowskyRavveBlanchard2014},  
we will focus on several classes  of very naturally defined graph polynomials,
the generalized chromatic polynomials
and polynomials defined as generating functions of induced or spanning subgraphs, and determinant polynomials. 
Restricting the form of the graph polynomials means restricting the coefficients of the graph polynomial in a
way that allows a natural combinatorial interpretation.
We show that for $\mathcal{P}$ either
the generalized chromatic polynomials or the
graph polynomials defined as generating functions of induced or spanning subgraphs,
the algebraic properties of the resulting graph polynomials
are semantic properties
in the sense that for every graph polynomial $\bP \in \mathcal{P}$
there is exactly one different graph polynomial $\bQ \in \mathcal{P}$ which is d.p.-equivalent to $\bP$, 
(Theorem \ref{th:char-1}).
In other words, we
give a characterization of d.p.-equivalence for graph polynomials in a particular simple form.

\subsection{Main results}
Our main contributions in this paper are more conceptual than technical. We put Wilf's recognition and
characterization problem, originally formulated for the chromatic polynomial, into the general framework of the
systematic study of graph polynomials.
To do this, we reinterpret diverse results from the literature into this general framework. This leads us
to the following results:

Let $\cC= \{G_n: n \in \N \}$ be a family of graphs and $\bP$ a graph polynomial.
\begin{enumerate}[(i)]
\item
We use a general criterion from \cite{ar:FischerMakowsky08}
to show that the sequence of polynomials $\bP(G_n;\bX)$ satisfying a 
linear recurrence relation with constant coefficients is C-finite\footnote{
The terminology C-finite is usually used for sequences of natural numbers,
and we adopt it here for polynomials, cf. \cite{bk:A=B}.
},
(Theorem \ref{th:c-finite}).
\item
Let $\bP$ be a graph polynomial.
A graph $G$ is $\bP$-unique if whenever for a graph $H$ we have $\bP(H;X) = \bP(G;X)$
then $H$ is isomorphic to $G$.
If every $G_n$ is $\bP$-unique we use this recurrence relation
to give 
characterizations of $\cY_{P, \cC}$ for many graph polynomials,
(Theorem \ref{th:abstract}).
\item
Graph polynomials are compared by their respective distinctive power
(d.p.- and s.d.p.-equivalence).
We characterize d.p.- and s.d.p.-equivalence, Proposition \ref{th:dp}.
For d.p.-equivalent graph properties 
$\cC, \cD$ 
this gives:
$\cC$ and
$\cD$ 
are d.p.-equivalent iff $\cC= \cD$ or $\cC= \cG- \cD$, 
(Proposition \ref{th:char-0})
\item
In Section \ref{se:linalg}
we study d.p.- and s.d.p.-equivalence of graph polynomials 
$\bP_{\cC}^{ind}(G;X)$ and
$\bP_{\cD}^{span}(G;X)$
obtained as generating functions for induced
and spanning subgraphs
in more detail.
They refine the d.p.-equivalence  relation of the respective graph properties $\cC$ and $\cD$.
Theorems 
\ref{th:dp-incomparable} and
\ref{th:dp-incomparable-span}
show
that there are infinitely many mutually d.p.-incomparable graph polynomials of this form.
\item
Theorem \ref{th:char-C-finite} shows that C-finiteness is a semantic property of 
graph polynomials obtained as generating functions for induced
and spanning subgraphs.
\item
Also in Section \ref{se:linalg}
we study d.p.-equivalence of generalized chromatic polynomials 
$\chi_{\cC}(G;X)$.
They also refine the d.p.- and s.d.p.-equivalence  relation of graph property $\cC$.
Theorem \ref{th:dp-incomparable-chrom} states that
there are infinitely many mutually s.d.p.-incomparable graph polynomials of this form.
\item
Finally, we consider graph polynomials which are generating functions of relations $\bP_{\Phi(A)}(G;X)$, and 
we show that not every graph polynomial  of this form can be written as a generating function of induced (spanning)
subgraphs or as a generalized chromatic polynomial $\chi_{\mathcal{C}}(G;X)$,
Theorems \ref{th:dominating} and \ref{th:notchrom}.
\end{enumerate}

\ifsubmit\else
\begin{framed}
\noindent
File: NREV-gp
\\
Label:  se:gp
\end{framed}
\fi 
\section{How to define and compare graph polynomials?}
\label{se:gp}
\subsection{Typical forms of graph polynomials}
In this paper we look at five types of graph polynomials: generalized chromatic polynomials
and polynomials defined as generating functions of induced or spanning subgraphs, and determinant polynomials,
and contrast this to graph polynomials arising from generating functions of relations.

More precisely,
let $\cC$ be a graph property. 
\begin{description}
\item[Generalized chromatic:]
Let $\chi_{\cC}(G;k)$ denote the number of colorings of $G$
with at most $k$ colors such that each color class induces a graph in $\cC$.
It was shown in \cite{ar:KotekMakowskyZilber08,ar:KotekMakowskyZilber11} that 
$\chi_{\cC}(G;k)$ is a polynomial in $k$
for any graph property $\cC$.
Generalized chromatic polynomials are further studied in \cite{ar:GHKMN-2016}.
\item[Generating functions:]
Let $A \subseteq V(G)$ and $B \subseteq E(G)$. We denote by $G[A]$ the induced subgraph of $G$
with vertices in $A$, and by $G\langle B \rangle$ the spanning subgraph of $G$ with edges in $B$.
\begin{enumerate}[(i)]
\item
Let $\cC$ a graph property.
$$
P_{\cC}^{ind}(G;X)= \sum_{A \subseteq V: G[A] \in \cC} X^{|A|}
$$
\item
Let $\cD$ a graph property which is closed under adding isolated vertices, i.e.,
if $G \in \cD$ then $G \sqcup K_1 \in \cD$.
$$
P_{\cD}^{span}(G;X)= \sum_{B \subseteq E: G<B> \in \cD} X^{|B|}
$$
\end{enumerate}
\item[Generalized Generating functions:]
Let $X_i: i \leq r$ be indeterminates and $f_i: i \leq r$ be graph parameters.
We also consider graph polynomials of the form
$$
P_{\cC, f_1, \ldots, f_r}^{ind}(G;X)= \sum_{A \subseteq V: G[A] \in \cC} \prod_{i=1}^r X_i^{f_i(G[A])}
$$
and
$$
P_{\cC, f_1, \ldots, f_r}^{span}(G;X)= \sum_{B \subseteq E: G<B> \in \cD} \prod_{i=1}^r X_i^{f_i(G<B>)}
$$
\item[Determinants:]
Let $M_G$ be a matrix associated with a graph $G$, such as the adjacency matrix, the Laplacian, etc.
Then we can form the polynomial $\det(\mathbf{1} \cdot X - M_G)$.
\end{description}
Special cases are 
the chromatic polynomial $\chi(G;X)$, 
the independence  polynomial $I(G;X)$, 
the Tutte
polynomial $T(G;X,Y)$ and
the characteristic polynomial of a graph $p_{char}(G;X)$.
Note that, in the sense of the following subsection, 
$\chi(G;X)$, $I(G;X)$ and $p_{char}(G;X)$ are mutually d.p.-incomparable, and $\chi(G;X)$
has strictly less distinctive power than $T(G;X,Y)$.

In Section \ref{se:relation} we shall see that there are graph polynomials defined in the
literature which seemingly do not fit the above frameworks. 
This is the case for the usual definition of
the {\em generating matching polynomial}: 
$$
\sum_{M \subseteq E(G): \mbox{match}(M)} X^{|M|}
$$
where $\mbox{match}(M)$ says that $V(G),M$ is a matching.
However, we shall see in Section \ref{se:relation} that there is another definition of the same polynomial
which is an generating function. In stark contrast to this, we shall prove there,
that the dominating polynomial 
$$
DOM(G;X)  = \sum_{A \subseteq V(G): \Phi_{dom}(A)} X^{|A|}
$$ 
where $\Phi_{dom}(A)$ says that $A$ is a dominating set of $G$,
cannot be written as a generating function, (Theorem \ref{th:dominating}).
This motivates the next definition, see also Section \ref{se:relation}.

\begin{description}
\item[Generating functions of a relation]
Let $\Phi$ be a property of pairs $(G,A)$ where $G$ is a graph and $A \subseteq V(G)^r$
is an $r$-ary relation on $G$.
Then the generating function of $\Phi$ is defined by
$$
\bP_{\Phi}(G;X) =
\sum_{A \subseteq V(G)^r: \Phi(G,A)} X^{|A|}
$$
\item[The most general graph polynomials]
Further generalizations of chromatic polynomials were studied
in
\cite{ar:MakowskyZilber2006,ar:KotekMakowskyZilber11,phd:Kotek}
and in \cite{ar:GarijoGoodallNesetril2013,GNOdM16}.
In \cite{ar:MakowskyZilber2006,ar:KotekMakowskyZilber11}
it was shown that the most general graph polynomials can be obtained
using model theory as developed in \cite{Zilber-uct,ar:CherlinHrushovski}.
A similar approach was used in
\cite{ar:GarijoGoodallNesetril2013,GNOdM16} based on ideas
from \cite{ar:delaHarpeJaeger1995}.
However, for our presentation here, the graph polynomials we have defined so far
suffice.
\end{description}


\subsection{Comparing graph polynomials}

We denote by $n(G), m(G), k(G)$ the number of vertices, edges and connected components of $G$.
Let $\bP$ be a graph polynomial.
A graph $G$ is {\em $\bP$-unique}  if every graph $H$ with $\bP(H;\bX) = \bP(G;\bX)$
is isomorphic to $G$.
We say that
two graphs $G,H$ are {\em similar} if the have the same number of vertices, edges and connected components.
Two graphs $G,H$ are {\em $\bP$-equivalent} if $\bP(H;\bX) = \bP(G;\bX)$.
{\em $\bP$  distinguishes between $G$ and $H$} if $G$ and $H$ are not $\bP$-equivalent.

Two graph polynomials $\bP(G;\bX)$ and $\bQ(G;\bY)$ with $r$ and $s$ indeterminates respectively
can be compared by their {\em distinctive power} on similar graphs:
{\em $\bP$ is at most as distinctive as $\bQ$}, $\bP \leq_{s.d.p} \bQ$ if any two similar graphs $G,H$ which are
$\bQ$-equivalent are also $\bP$-equivalent.
{\em $\bP$  and $\bQ$ are s.d.p.-equivalent}, $\bP \sim_{s.d.p} \bQ$ if for any two similar graphs $G,H$
$\bP$-equivalence and
$\bQ$-equivalence coincide.
We can also compare graph polynomials on graphs without requiring similarity.
In this case we say that a graph polynomial $\bP$ is at most as distinctive as $\bQ$, $\bP \leq_{d.p.} \bQ$,
if for all graphs $G_1$ and $G_2$ we have that 
$$
\bQ(G_1) = \bQ(G_2) \mbox{   implies   }
\bP(G_1) = \bP(G_2)
$$
$\bP$ and $\bQ$ are d.p.-equivalent iff both
$\bP \leq_{d.p.} \bQ$ and
$\bQ \leq_{d.p.} \bP$.
D.p.-equivalence is stronger that s.d.p.-equivalence:

\begin{lemma}
\label{le:new}
For any two graph polynomials $\bP$ and $\bQ$ we have:
$\bP \leq_{d.p.} \bQ$ implies
$\bP \leq_{s.d.p.} \bQ$. 
\end{lemma}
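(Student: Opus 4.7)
The plan is immediate: the statement is essentially the observation that s.d.p.-equivalence is just d.p.-equivalence restricted to the subclass of similar graphs, so any universal implication over all pairs of graphs automatically specializes to a universal implication over the subclass of similar pairs.

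More concretely, I would start by unfolding both definitions. Assume $\bP \leq_{d.p.} \bQ$, i.e., for every pair of graphs $G_1, G_2$ (with no constraint on $n, m, k$), the equality $\bQ(G_1;\bY) = \bQ(G_2;\bY)$ implies $\bP(G_1;\bX) = \bP(G_2;\bX)$. To verify $\bP \leq_{s.d.p.} \bQ$, I would fix an arbitrary pair of \emph{similar} graphs $G, H$ (i.e., with $n(G)=n(H)$, $m(G)=m(H)$, $k(G)=k(H)$) that are $\bQ$-equivalent, meaning $\bQ(G;\bY) = \bQ(H;\bY)$. Applying the d.p.-hypothesis to this pair yields $\bP(G;\bX) = \bP(H;\bX)$, which is exactly the statement that $G$ and $H$ are $\bP$-equivalent.

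Since $G$ and $H$ were arbitrary similar graphs, this establishes the s.d.p.-inequality. There is no real obstacle here; the only subtlety worth noting is that the converse fails in general, and one should be careful that the definition of s.d.p.-equivalence quantifies only over similar pairs, so it genuinely is a weaker relation (as the paper is about to exploit in later sections). This asymmetry is what gives the lemma content: it tells the reader that the strict ordering goes in the direction $\leq_{d.p.} \;\Rightarrow\; \leq_{s.d.p.}$ and not the other way around.
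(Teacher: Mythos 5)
Your argument is correct: since $\leq_{s.d.p.}$ quantifies only over pairs of similar graphs while $\leq_{d.p.}$ quantifies over all pairs, the implication is just a specialization of the universal hypothesis, which is exactly why the paper states the lemma without proof. Nothing is missing.
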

In this paper we concentrate on d.p.-equivalence and speak of s.d.p.-equivalence only when it is needed.
In a sequel to this paper we will investigate in detail what can be said of both notions.

Part (ii) of the following Proposition
was shown in \cite{ar:MakowskyRavveBlanchard2014},
and (iii) follows from Definition \ref{def:computable}(i) and (ii).

\begin{proposition}
\label{th:dp}
\begin{enumerate}[(i)]
\item
$\bP$ is at most as distinctive as $\bQ$, $\bP \leq_{d.p} \bQ$, iff there is a function
$F: \Z[\bY] \rightarrow \Z[\bX]$ such that for every graph $G$ we have
$$
\bP(G;\bX) = F(\bQ(G;\bY))
$$
\item
$\bP$ is at most as distinctive as $\bQ$ on similar graphs, $\bP \leq_{s.d.p} \bQ$, iff there is a function
$F: \Z[\bY] \times \Z^3 \rightarrow \Z[\bX]$ such that for every graph $G$ we have
$$
\bP(G;\bX) = F(\bQ(G;\bY), n(G), m(G), k(G))
$$
\item
Furthermore, both for d.p. and s.d.p., if both $\bP$ and $\bQ$ are computable, then $F$ is computable, too.
\end{enumerate}
\end{proposition}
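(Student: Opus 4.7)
The plan is to prove each of the three parts via the same basic factorization idea: a function $F$ with the required property exists precisely when $\bP$ is constant on the fibers of $\bQ$ (possibly refined by the triple $(n(G),m(G),k(G))$), so one simply defines $F$ to be the induced map on the range and extends it arbitrarily.

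For part (i), the reverse implication is immediate substitution: if $\bP(G;\bX)=F(\bQ(G;\bY))$ and $\bQ(G_1;\bY)=\bQ(G_2;\bY)$, then $\bP(G_1;\bX)=F(\bQ(G_1;\bY))=F(\bQ(G_2;\bY))=\bP(G_2;\bX)$. For the forward direction, assume $\bP\leq_{d.p.}\bQ$. For each $q\in\Z[\bY]$ in the range of $\bQ$, select a witness graph $G_q$ with $\bQ(G_q;\bY)=q$ and set $F(q):=\bP(G_q;\bX)$; the distinctive-power hypothesis makes this independent of the choice of $G_q$. Extend $F$ to all of $\Z[\bY]$ by $F(q):=0$ off the range. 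Part (ii) proceeds analogously on similarity classes: define $F(q,n,m,k):=\bP(G;\bX)$ for any graph $G$ realizing $\bQ(G;\bY)=q$ together with $n(G)=n$, $m(G)=m$, $k(G)=k$; well-definedness follows from s.d.p.-equivalence, and the reverse direction again by substitution.

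For part (iii), I would make the arbitrary choices above effective. Given input $q$ (plus the triple $(n,m,k)$ in the s.d.p. case), first invoke clause (ii) of Definition \ref{def:computable} to decide whether $q$ lies in the range of $\bQ$; in the s.d.p. case refine this by enumerating the finitely many graphs of order $n$ to check whether some such graph also satisfies $m(G)=m$, $k(G)=k$ and $\bQ(G;\bY)=q$. If no such graph exists, output the default value $0$. Otherwise, enumerate finite graphs in some canonical order, use computability of $\bQ$ from clause (i) of Definition \ref{def:computable} to compute $\bQ(G_i;\bY)$ for each $G_i$ until a match is found, and then output $\bP(G_i;\bX)$ by computability of $\bP$. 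Termination is guaranteed because the preceding decidability check ensures at least one matching graph exists.

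The main obstacle, such as it is, lies entirely in part (iii): without the decidability of the range of $\bQ$, a naive enumerative search on an input $q$ outside the range would loop forever, so clause (ii) of Definition \ref{def:computable} is essential to prevent this. Parts (i) and (ii) themselves are purely set-theoretic and amount to choosing representatives from equivalence classes cut out by $\bQ$, respectively by $\bQ$ together with the similarity invariants.
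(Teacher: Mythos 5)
Your proposal is correct and follows essentially the same route as the paper: parts (i) and (ii) by the standard factorization of $\bP$ through the fibers of $\bQ$ (which the paper delegates to \cite{ar:MakowskyRavveBlanchard2014}), and part (iii) by using the decidability of the range of $\bQ$ to set $F=0$ off the range and compute $F$ on the range by exhaustive search, exactly as in the paper's proof of (iii). Your write-up is somewhat more explicit about the enumeration and termination argument, but there is no substantive difference.
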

\begin{proof}[Proof of (iii):]
The function $F$ from (i) or (ii) is not unique.
However, because the range of polynomials given by $\bQ$ is assumed to be decidable,
we can choose $F$ such that $F(p) = 0$ for all $p \in \Z[\bX]$ such that there is no graph $G$ with $\bQ(G;\bX) =p$.
$F$ chosen in this way now is computable.
\end{proof}
\begin{remark}
In the literature \cite{ar:MerinoNoble2009,ar:Sokal2005}
on the Tutte polynomial s.d.p.-equivalence is  implicitly used to compare the various
forms
of the Tutte polynomial and the Potts model.
The various forms of the Tutte polynomial are not d.p.-equivalent.
The same is true for the various forms of the matching polynomial as discussed in, say, \cite{bk:LovaszPlummer86}.
\end{remark}

Graph polynomials are supposed to give information about graphs.
The algebraic characterization of $\cY_{\bP, \cC}$ uses the coefficients of the  polynomial $\bP(G;\bX)$
to characterize $\cC$. However, such a characterization depends on the presentation of $\bP(G;\bX)$
with respect to the basic polynomials chosen to write $\bP(G;\bX)$. In the univariate case, the basic polynomials
are usually, but not always, $X^n$. Sometimes one uses ${X \choose n}$ instead, or the falling factorial
$X_{(n)} = X \cdot (X-1) \cdot \ldots \cdot (X-n+1)$.
On the other side, the notion of d.p.-equivalence captures properties of graphs independently 
of the presentations of the particular graph polynomials.
A statement involving a graph polynomial $\bP$ is a proper statement about graphs, 
if it is invariant under d.p.-equivalence.
Otherwise, it is merely a statement about graphs via the particular presentation of the graph polynomial.

\begin{problem}[Invariance under d.p.-equivalence]
Are there algebraic characterizations of $\cY_{\bP, \cC}$ which are invariant under d.p.- or s.d.p.-equivalence?
\end{problem}
In \cite{ar:MakowskyRavveBlanchard2014},
this question was studied concerning the location of the roots of $\bP(G;\bX)$. 
The answer was negative
even if the class of graph polynomials considered is closed under substitutions, and prefactors.
However,
the various versions of the Tutte polynomials and matching polynomials  can be obtained from
each other in this way. 

In the light of the discussion in \cite{ar:MakowskyRavveBlanchard2014}, we now look at the
following problem:

\begin{problem}[Distinctive power]
Given two graph polynomials $\bP, \bQ$ in a specific form such as 
generalized chromatic polynomials,
polynomials defined as generating functions of induced or spanning subgraphs, or determinant polynomials,
characterize when they are d.p-equivalent. 
\end{problem}

In Section \ref{se:linalg}, we will discuss this problem for the case of 
polynomials defined as generating functions of induced subgraphs
and
generalized chromatic polynomials, Theorem \ref{th:char-1} and  Proposition \ref{prop:char-2}.
These theorems
do not hold for generating functions of relations,
see Section \ref{se:relation}.

\ifsubmit\else
\begin{framed}
\noindent
File: NREV-recog
\\
Label:  se:recog
\end{framed}
\fi 
\section{The recognition and characterization problems}
\label{se:recog}

Let $\bP(G;\bX)$ be a computable 
graph polynomial, 
and let $\cC$ be a graph property.
Recall from 
Section \ref{se:intro},
that the {\em Recognition Problem for $\bP(G;\bX)$ and $\cC$} is the question, whether,
given a polynomial $s(\bX) \in \Z[\bX]$,
there is a graph $G_s \in \cC$ such that $\bP(G_s;\bX)=s(\bX)$?
The {\em Characterization Problem for $\bP(G;\bX)$ and $\cC$} asks for a description of the set
of polynomials
$$
\cY_{\bP, \cC} = \{ p(\bX) \in \Z[\bX] : \exists G \in \cC \mbox{   with   } \bP(G;\bX) = p(\bX) \}.
$$
If $\cC$ is the class of all finite graphs, we write $\cY_{\bP}$. 
We also noted in 
Section \ref{se:intro}, that there is a brute force solution for the Recognition Problem as follows:
\begin{observation}
Assume that $\bP(G;\bX)$ is a computable graph polynomial for which 
we can give a bound $\beta(\bP(G;\bX))$ for the size of $G$.
Then, checking whether a polynomial $p(\bX)$ is in $\cY_{\bP}$ can be done by computing $\bP(G;\bX)$
for all graphs smaller than  $\beta(\bP(G;\bX))$.
\end{observation}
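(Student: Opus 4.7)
The plan is to exhibit the brute-force algorithm explicitly and verify that each of its steps is effective. Given an input polynomial $p(\bX) \in \Z[\bX]$, first compute the bound $N := \beta(p(\bX))$; by hypothesis this is effective, and it caps the number of vertices of any candidate graph $G$ with $\bP(G;\bX) = p(\bX)$. Next, enumerate a list $G_1, \ldots, G_{M(N)}$ of representatives of all isomorphism classes of finite graphs on at most $N$ vertices; this is a standard finite enumeration (for each $n \le N$, there are at most $2^{\binom{n}{2}}$ labeled graphs, from which one can filter isomorphism class representatives by any canonical-form procedure).

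For each $G_i$ in the list, use the computability of $\bP$ to compute the polynomial $\bP(G_i;\bX) \in \Z[\bX]$, and test whether it equals $p(\bX)$. Equality of two elements of $\Z[\bX]$ is decidable by comparing coefficients of matching monomials, so each test terminates. Accept if some $G_i$ matches, otherwise reject.

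For correctness: if the procedure accepts with witness $G_i$, then $G_i \in \cG$ satisfies $\bP(G_i;\bX) = p(\bX)$, so $p \in \cY_{\bP}$. Conversely, if $p \in \cY_{\bP}$, pick any witness $G$ with $\bP(G;\bX) = p(\bX)$; by the defining property of $\beta$ we have $|V(G)| \le \beta(p(\bX)) = N$, hence $G$ is isomorphic to some $G_i$ in our enumeration, and since $\bP$ is invariant under isomorphism the test at stage $i$ succeeds.

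I do not foresee a genuine obstacle here; the statement is a routine observation and the proof is a bookkeeping exercise. The only point that deserves care is making sure that each ingredient is genuinely effective, namely (a) computability of $\beta$ on polynomial inputs, (b) the finite enumeration of graphs up to isomorphism of bounded size, (c) computability of $\bP(G_i;\bX)$, which is given by Definition \ref{def:computable}(i), and (d) decidability of equality in $\Z[\bX]$. None of these requires the second clause of Definition \ref{def:computable}; decidability of the range of $\bP$ is not needed for this particular observation, although it is what makes the converse encoding (and the function $F$ in Proposition \ref{th:dp}(iii)) computable.
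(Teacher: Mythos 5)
Your proposal is correct and follows exactly the route the paper intends: the observation is stated without a separate proof, being just the brute-force procedure already sketched in the introduction (bound the order of candidate graphs, enumerate isomorphism classes up to that order, compute $\bP$ on each and compare). Your added remark that only Definition~\ref{def:computable}(i) and not the decidability of the range is needed here is accurate and consistent with how the paper uses clause (ii) only later, in Proposition~\ref{th:dp}(iii).
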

What we are looking for should be better than that.

The problem may be easier for certain graph properties $\cC$ in the relative version.
There are many such characterization in the literature, we just give one here for the sake of illustration.
\begin{example}[Taken from \cite{bk:DongKohTeo2005}]
Let $\cC$ be the class of finite connected graphs. Then $\cY_{\chi, \cC}^r$ consists
of all instances of the chromatic polynomial which have $0$ as a root with multiplicity one.
\end{example}

It is easy to define graph polynomials $P(G;X)$ with a {\em trivial recognition}, i.e., where for every
polynomial 
$$s(X) = \sum_{i=0}^m a_i X^i \in \N[X]$$ 
there is a graph $G_s$ with $P(G_s;X)= s(X)$.
\begin{proposition}
Let 
$MaxCl(G;X) = \sum_i mcl_i(G) X_i$
be the  graph polynomial 
where $mcl_i(G)$ denotes the number of maximal cliques of size $i$.
$MaxCl(G;X)$ has a trivial recognition.
\end{proposition}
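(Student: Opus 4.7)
The plan is to exhibit, for each polynomial $s(X) = \sum_{i=1}^m a_i X^i \in \N[X]$ (treating $a_0=0$, since by convention a maximal clique is nonempty), an explicit graph $G_s$ with $MaxCl(G_s;X)=s(X)$. The natural candidate is the disjoint union of complete graphs of the prescribed sizes and multiplicities:
$$
G_s \;=\; \bigsqcup_{i=1}^m \underbrace{K_i \sqcup K_i \sqcup \cdots \sqcup K_i}_{a_i \text{ copies}}.
$$

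The verification reduces to the following standard observation: in a disjoint union $G_1 \sqcup G_2$, any clique lies entirely inside one of the $G_j$'s (there are no edges between the parts), so the maximal cliques of $G_1 \sqcup G_2$ are exactly the union of the maximal cliques of $G_1$ and of $G_2$. Iterating over the components of $G_s$, the maximal cliques of $G_s$ are exactly the maximal cliques of the individual components $K_i$. Since $K_i$ has a unique maximal clique, namely itself (of size $i$), we obtain $mcl_i(G_s) = a_i$ for every $i \geq 1$, and hence $MaxCl(G_s;X) = s(X)$.

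The only subtlety worth spelling out is the boundary case $i=1$: a copy of $K_1$ is an isolated vertex in $G_s$, so it does form a maximal clique of size $1$ (it has no neighbors into which the clique could be extended). This confirms that the $a_1$ isolated vertices contribute exactly $a_1$ to $mcl_1(G_s)$, and no additional maximal cliques of size $1$ arise from the larger components (since every vertex in a $K_i$ with $i \geq 2$ is contained in the larger maximal clique $K_i$, hence is not itself maximal). I do not anticipate any real obstacle; the argument is essentially a one-line construction plus the component-wise decomposition of maximal cliques.
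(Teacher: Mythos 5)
Your construction is exactly the one in the paper: take $G_s$ to be the disjoint union of $a_i$ copies of $K_i$ and observe that maximal cliques decompose over connected components, so $mcl_i(G_s)=a_i$. The extra care you take with the $i=1$ (and $i=0$) boundary cases is a harmless elaboration of the same one-line argument.
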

\begin{proof}
Let $s(X) = \sum_{i=0}^m a_i X^i \in \N[X]$ and let $G_s$ be the graph which is the disjoint union
of $a_i$-many cliques of size $i$. Then $MaxCl(G_s;X) = s(X)$.
\end{proof}

\begin{Problem}
Find more naturally defined graph polynomials with trivial recognition.
\end{Problem}

To show that {\em not all polynomials} are chromatic polynomials, 
one can use various properties of the coefficients.
One sufficient condition is that the coefficients are alternating in sign for connected graphs.
For a characterization, more properties of the coefficients are needed.
One such property is 
the fact that its coefficients (or their absolute values)
are unimodal or logconcave, cf. \cite{ar:Huh2015}, 
which was suggested also for the
independence polynomial, and other graph polynomials.
However, showing unimodality of the coefficients is notoriously hard, 
\cite{ar:Stanley1989,ar:Brenti1992,handbook:branden2015,ar:Huh2015}.

\ifsubmit\else
\begin{framed}
\noindent
File: NREV-recur
\\
Label:  se:recur
\end{framed}
\fi 
\section{Characterizations using recurrence relations}
\label{se:recur}
Let $P_n$, $C_n$ and $K_n$ 
denote, respectively, the path, the cycle and the complete graph on $n$ vertices,
and $K_{n,m}$ denote the complete bipartite graph on $n+m$ vertices.
We let $\mathrm{Path}$ be the family of paths $P_n$, and $\mathrm{Cycle}, \mathrm{Clique}$ and 
$\mathrm{CBipartite}$ the families of $C_n$, $K_n$ and $K_{n,n}$ respectively.
For a class of graphs $\cC$ closed under isomorphisms, we denote the class of graphs
consisting of disjoint unions of graphs in $\cC$ by $\DU(\cC)$.

Let $\bP(G;\bX)$ be a graph polynomial and $G_n$ be a sequence of graphs.
The {\em sequence of polynomials $\bP_n(\bX)=\bP(G_n;\bX)$ is C-finite}
if there is $q \in \N$ and there are polynomials $f_i(\bX) \in \Z[\bX], i \in [q]$ such that
$$
\bP_{n+q}(X) = \sum_{i =0}^{q-1}  f_i(\bX) \bP_i(\bX)
$$

\subsection{The characteristic polynomial $p_{char}(G;X)$}
Let $G$ be an undirected graph and $A_G$ is symmetric adjacency matrix.
The characteristic polynomial is defined as 
$$
p_{char}(G;X) = \det(\mathbf{1} \cdot X -A_G)
$$
We note that  $p_{char}(G;X)$ is {\em multiplicative}, i.e.,
if $H$ is the disjoint union of $G_1$ and $G_2$ then 
$p_{char}(H;X) = p_{char}(G_1;X) \cdot p_{char}(G_2;X)$.  

\begin{proposition}[Taken from {\cite[Chapter 14.4.2]{bk:BrouwerHaemers2012}}]
\label{pr:char-unique}
\ \\
The graphs $P_n$, $C_n$, $K_n$ and $K_{n,n}$ are $p_{char}$-unique.
\end{proposition}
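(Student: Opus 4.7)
The plan is to recognise that $p_{char}$-uniqueness of $G$ is precisely the statement that $G$ is determined by its adjacency spectrum. So the task reduces to verifying the classical DS-theorems for each of the four families, using only invariants that can be read off from $p_{char}(H;X)$: the order $n(H)$ (the degree of the polynomial), the edge count $m(H)$ (from the coefficient of $X^{n-2}$, using that $A_H$ has zero diagonal), the power sums $W_k(H)=\sum_i \lambda_i^k$ (the numbers of closed walks of length $k$), bipartiteness (iff the spectrum is symmetric about $0$), and regularity (iff the spectral radius equals $2m/n$).

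For $K_n$, whose spectrum is $\{n-1,(-1)^{n-1}\}$, I would invoke the Hoffman-type classical fact that a simple graph with smallest eigenvalue $\geq -1$ is a disjoint union of cliques; the multiplicities of $n-1$ and $-1$, together with $n(H)=n$, then force $H\cong K_n$. For $K_{n,n}$, whose spectrum is $\{n,0^{2n-2},-n\}$, I would read off $2m=\sum\lambda_i^2=2n^2$, spectral symmetry about $0$ (so $H$ is bipartite), and the fact that $n$ is a simple eigenvalue equal to the spectral radius (so $H$ is connected and $n$-regular by Perron–Frobenius); a connected $n$-regular bipartite graph on $2n$ vertices is necessarily $K_{n,n}$.

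For $C_n$, the spectrum $\{2\cos(2\pi k/n):k=0,\dots,n-1\}$ gives spectral radius $2$ (simple) and $2m=2n$, so $H$ is a connected $2$-regular graph on $n$ vertices, forcing $H\cong C_n$. The case $P_n$ is the delicate one and I expect to be the main obstacle, because Schwenk proved that almost every tree has a cospectral mate. Here I would exploit the specific extremal position of $P_n$: it is the unique graph on $n$ vertices whose spectral radius equals $2\cos(\pi/(n+1))$, which is strictly smaller than the spectral radius of any other tree on $n$ vertices (a classical Smith-type result on graphs with spectral radius $<2$). I would first use $m(H)=n-1$ together with the spectrum lying in $(-2,2)$ to conclude that $H$ is a forest, then use connectedness (simplicity of the largest eigenvalue) to get a tree, and finally use the spectral radius bound to pin down $H\cong P_n$. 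Throughout, I would cite \cite{bk:BrouwerHaemers2012} for the underlying DS-results rather than reproducing their proofs.
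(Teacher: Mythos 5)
The paper offers no proof of this proposition at all: it is quoted verbatim from \cite[Chapter 14.4.2]{bk:BrouwerHaemers2012}, so the only ``proof'' in the paper is the citation. Your sketch is essentially the standard spectral-characterization argument that the cited source itself gives, and it is correct in all four cases: reading $n(H)$, $m(H)$ and the closed-walk counts off $p_{char}$, using $\lambda_{\min}\ge -1$ for $K_n$, regularity plus bipartiteness for $K_{n,n}$, $2$-regularity for $C_n$, and Smith's classification of graphs with spectral radius below $2$ for $P_n$. One justification is imprecise: simplicity of the largest eigenvalue does \emph{not} imply connectedness for general graphs (e.g.\ $K_3\sqcup K_2$ has simple top eigenvalue $2$); the correct route is to establish regularity first via $\lambda_1=2m/n$ and then use that, for regular graphs, the multiplicity of $\lambda_1$ equals the number of components. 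This matters only in the order of steps for $K_{n,n}$ and $C_n$ (where you do derive regularity, so it is fine), and in the $P_n$ case the connectivity step is redundant anyway, since a forest on $n$ vertices with $n-1$ edges is already a tree. With that one clarification your argument is complete and faithful to the reference.
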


\begin{proposition}[A.J. Schwenk \cite{ar:Schwenk1974}]
\label{pr:char-c-finite}
\ \\
The sequences of polynomials $p_{char}(P_n)$, $p_{char}(C_n)$, 
$p_{char}(K_n)$ and $p_{char}(K_{n,n})$ are all C-finite.
\end{proposition}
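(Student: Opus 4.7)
The plan is to exhibit, for each of the four families, an explicit linear recurrence with polynomial coefficients independent of $n$. The cases of $P_n$ and $C_n$ rest on the vertex-deletion identity for the characteristic polynomial that is the main contribution of \cite{ar:Schwenk1974}, while the cases of $K_n$ and $K_{n,n}$ follow from the closed-form eigenvalue expressions together with the elementary fact that any sequence of the form $q(n)\cdot a^n$, where $q$ is a polynomial of degree $d$ in $n$, is C-finite of order $d+1$ with characteristic operator $(E - a)^{d+1}$.

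For $P_n$, applying Schwenk's formula at an endpoint $v$ (which has one neighbour and lies on no cycle) gives
\[
p_{char}(P_n;X) \;=\; X\cdot p_{char}(P_{n-1};X) - p_{char}(P_{n-2};X),
\]
a C-finite recurrence with $q=2$. For $C_n$, Schwenk's formula applied at an arbitrary vertex (two neighbours, lying on the unique cycle $C_n$ whose deletion leaves the empty graph, whose characteristic polynomial is $1$) yields
\[
p_{char}(C_n;X) \;=\; X\cdot p_{char}(P_{n-1};X) - 2\, p_{char}(P_{n-2};X) - 2,
\]
equivalently $p_{char}(C_n;X) = p_{char}(P_n;X) - p_{char}(P_{n-2};X) - 2$. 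Since $p_{char}(P_n)$ is annihilated by the shift operator $E^2 - XE + 1$ and the constant sequence $-2$ is annihilated by $E-1$, their sum is annihilated by $(E^2 - XE + 1)(E-1) = E^3 - (X+1)E^2 + (X+1)E - 1$, so
\[
p_{char}(C_{n+3};X) \;=\; (X+1)\,p_{char}(C_{n+2};X) - (X+1)\,p_{char}(C_{n+1};X) + p_{char}(C_n;X).
\]

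For $K_n$, the spectrum $\{n-1,\,-1^{(n-1)}\}$ gives $p_{char}(K_n;X) = (X - n + 1)(X+1)^{n-1}$, which has the form $(\alpha(X) + \beta(X)\,n)\,(X+1)^{n-1}$ with $\alpha,\beta \in \Z[X]$; hence the sequence is annihilated by $(E - (X+1))^2$, yielding
\[
p_{char}(K_{n+2};X) \;=\; 2(X+1)\,p_{char}(K_{n+1};X) - (X+1)^2\,p_{char}(K_n;X).
\]
For $K_{n,n}$, the spectrum $\{\pm n,\,0^{(2n-2)}\}$ gives $p_{char}(K_{n,n};X) = X^{2n-2}(X^2 - n^2)$; writing $y = X^2$, this equals $y^n - n^2 y^{n-1}$, a quadratic polynomial in $n$ times $y^n$, and is therefore annihilated by $(E - y)^3$, producing the third-order C-finite recurrence
\[
p_{char}(K_{n+3,n+3};X) \;=\; 3X^2\, p_{char}(K_{n+2,n+2};X) - 3X^4\, p_{char}(K_{n+1,n+1};X) + X^6\, p_{char}(K_{n,n};X).
\]

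I do not expect a serious obstacle: each recurrence is verified by direct substitution from the closed-form expression or Schwenk's identity, and the coefficients are visibly polynomials in $X$ that do not depend on $n$. The only conceptual point worth emphasising is the passage from ``polynomial-in-$n$ times geometric'' to a homogeneous linear recurrence via the annihilating operator $(E-a)^{d+1}$; this is what allows the cliques and complete bipartite graphs to be handled uniformly, even though their characteristic polynomials depend on $n$ through the multiplicity of an eigenvalue.
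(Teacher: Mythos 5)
Your proposal is correct, and it actually supplies more than the paper does: the paper proves nothing here, it simply cites Schwenk. What you have done is unwind that citation into four explicit C-finite recurrences, and each one checks out. For $P_n$ and $C_n$ you correctly apply Schwenk's vertex-deletion identity (the cycle case $p_{char}(C_n;X)=p_{char}(P_n;X)-p_{char}(P_{n-2};X)-2$ and the resulting order-$3$ recurrence with coefficients $X+1,-(X+1),1$ verify against $\phi(C_3)=X^3-3X-2$, $\phi(C_4)=X^4-4X^2$, $\phi(C_5)=X^5-5X^3+5X-2$, $\phi(C_6)=X^6-6X^4+9X^2-4$). For $K_n$ and $K_{n,n}$ the genuinely useful observation --- and the one the paper leaves implicit --- is that although the closed forms $(X-n+1)(X+1)^{n-1}$ and $X^{2n-2}(X^2-n^2)$ depend on $n$, they are of the shape (polynomial in $n$ of degree $d$) times (geometric in an element of $\Z[X]$), hence annihilated by $(E-a)^{d+1}$ with $a=X+1$ resp.\ $a=X^2$; the resulting recurrences $\phi(K_{n+2})=2(X+1)\phi(K_{n+1})-(X+1)^2\phi(K_n)$ and $\phi(K_{n+3,n+3})=3X^2\phi(K_{n+2,n+2})-3X^4\phi(K_{n+1,n+1})+X^6\phi(K_{n,n})$ both verify on initial terms. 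This is a nice counterpoint to the paper's later remark that the \emph{chromatic} and \emph{Tutte} polynomial sequences of $K_n$, $K_{n,n}$ fail to be C-finite: for the characteristic polynomial the $n$-dependence sits only in a bounded-degree polynomial prefactor, so C-finiteness survives. Two cosmetic points only: the paper's displayed definition of C-finite has a typo ($\bP_i$ should read $\bP_{n+i}$), and your $y^n-n^2y^{n-1}$ is a quadratic in $n$ times $y^{n-1}$ rather than times $y^n$; neither affects the argument, since $(E-y)^3$ annihilates $p(n)\,y^{n-1}$ for any quadratic $p$ just the same.
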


Proposition \ref{pr:char-unique} 
gives us:
\begin{theorem}
\ 
\begin{enumerate}[(i)]
\item
$G$ is isomorphic to $P_n$ iff $p_{char}(G;X) = p_{char}(P_n;X)$. 
\item
$G$ is isomorphic to $C_n$ iff $p_{char}(G;X) = p_{char}(C_n;X)$. 
\item
$G$ is isomorphic to $K_n$ iff $p_{char}(G;X) = p_{char}(K_n;X)$. 
\item
$G$ is isomorphic to $K_{n,n}$ iff $p_{char}(G;X) = p_{char}(K_{n,n};X)$. 
\end{enumerate}
\end{theorem}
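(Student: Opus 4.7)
The plan is to observe that this theorem is essentially just an unpacking of the notion of $p_{char}$-uniqueness, together with the trivial fact that graph isomorphism invariants respect isomorphism. Since Proposition \ref{pr:char-unique} has already been cited, there is almost nothing left to prove; the statement is really a reformulation designed to highlight what uniqueness means for the recognition/characterization framework.

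More concretely, I would handle each of the four items in parallel by the same argument. The forward direction ``$G \cong H_n \Rightarrow p_{char}(G;X) = p_{char}(H_n;X)$'' (for $H_n \in \{P_n, C_n, K_n, K_{n,n}\}$) is immediate from the fact that $p_{char}$ is a graph polynomial, hence invariant under isomorphism. For the backward direction, I would invoke the defining property stated earlier in the paper: $G$ is $\bP$-unique if and only if every $H$ with $\bP(H;\bX) = \bP(G;\bX)$ is isomorphic to $G$. Applying this with $\bP = p_{char}$ and $G \in \{P_n, C_n, K_n, K_{n,n}\}$, Proposition \ref{pr:char-unique} gives exactly the desired implication in each case.

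There is no real obstacle here, since the substantive content---namely that these four families are $p_{char}$-unique---is imported directly from \cite{bk:BrouwerHaemers2012}. The only thing worth doing carefully is being explicit that the theorem is nothing more than a restatement of Proposition \ref{pr:char-unique} via the definition of $\bP$-uniqueness, so that readers see how the recognition/characterization problem for the characteristic polynomial on these families is completely resolved by the algebraic equality of polynomials. In that sense the present theorem serves as an illustration of the general pattern: $\bP$-uniqueness of a family $\{G_n\}$ yields an algebraic characterization of each $G_n$ inside $\cG$ by a single polynomial identity, without any further combinatorial verification.
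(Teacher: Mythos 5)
Your proposal is correct and matches the paper exactly: the paper derives this theorem in one line from Proposition~\ref{pr:char-unique}, with the forward direction being isomorphism invariance of $p_{char}$ and the backward direction being precisely the definition of $p_{char}$-uniqueness. Nothing further is needed.
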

Proposition \ref{pr:char-c-finite}
gives us similar
characterizations of
$\cY_{p_{char}, \mathrm{Path}}, \cY_{p_{char}, \mathrm{Cycle}}, \cY_{p_{char}, \mathrm{Clique}}$ and
$\cY_{p_{char}, \mathrm{CBipartite}}$  using C-finiteness.

\ifskip\else
Using the multiplicativity of $p_{char}$ we get a characterization of
$\cY_{p_{char}, \DU(\mathrm{Path})}$: 
\begin{theorem}
$G$ is isomorphic to the disjoint union of $P_{n_1}, \ldots P_{n_k}$ iff 
$p_{char}(G;X) = \prod_{i=1}^k p_{char}(P_{n_i};X)$. 
\end{theorem}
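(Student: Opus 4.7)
The forward direction is immediate from the multiplicativity of $p_{char}$ over disjoint unions (noted just before the theorem), applied inductively to the indexed disjoint union.

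For the converse, my plan is strong induction on $k$. The base case $k=0$ forces $p_{char}(G;X) = 1$, so $G$ is empty. For the inductive step, reorder so that $n_1 = \max_i n_i$ and decompose $G$ into its connected components $G = D_1 \sqcup \cdots \sqcup D_r$; multiplicativity gives $\prod_j p_{char}(D_j;X) = \prod_i p_{char}(P_{n_i};X)$. Set $\lambda_1 := 2\cos(\pi/(n_1+1))$. A short trigonometric check using $\cos\alpha = \cos\beta$ on $(0,\pi)$ shows that $\lambda_1$ is an eigenvalue of $P_m$ for $m \leq n_1$ only when $m = n_1$, and with multiplicity one; hence $\lambda_1$ occurs on the right with multiplicity exactly $a := |\{i : n_i = n_1\}|$. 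By Perron--Frobenius each $\rho(D_j)$ is a simple eigenvalue of $D_j$, so exactly $a$ of the $D_j$ satisfy $\rho(D_j) = \lambda_1$. The plan is then to show that each such $D_j$ has $p_{char}(D_j;X) = p_{char}(P_{n_1};X)$; the $p_{char}$-uniqueness of $P_{n_1}$ from Proposition \ref{pr:char-unique} will then yield $D_j \cong P_{n_1}$, and peeling off one such component on the left together with one factor on the right reduces to a strictly smaller instance which the inductive hypothesis handles.

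The main obstacle is this identification step: matching spectral radius alone does not force $p_{char}(D_j;X) = p_{char}(P_{n_1};X)$. By Smith's classification, a connected graph of spectral radius strictly less than $2$ is a Dynkin diagram of type $A$, $D$, or $E$, and sporadic coincidences in the spectral radius do occur, for example $\rho(P_5) = \rho(K_{1,3}) = \sqrt 3$ and $\rho(P_{11}) = \rho(E_6) = 2\cos(\pi/12)$. Excluding a non-path Dynkin component will require using the full multiset of eigenvalues of the right-hand side: such a component contributes extra eigenvalues (for instance $K_{1,3}$ contributes a double eigenvalue at $0$, and $0$ occurs in the spectrum of $P_m$ only for odd $m$ and with multiplicity one) whose multiplicities cannot be absorbed into the remaining path spectra $P_{n_i}$ with $n_i \leq n_1$ without producing a global multiplicity mismatch. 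Carrying out this combinatorial accounting uniformly across the finitely many sporadic Dynkin cases is, I expect, the technical heart of the proof.
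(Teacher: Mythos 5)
Your forward direction is exactly what the paper itself supplies: the theorem is stated there with no argument beyond the preceding remark that it follows ``using the multiplicativity of $p_{char}$,'' which only yields the implication from the isomorphism to the product formula. You are right that all the content is in the converse, and right that multiplicativity plus $p_{char}$-uniqueness of each individual $P_{n_i}$ does not deliver it, since the factorization of $p_{char}(G;X)$ coming from the connected components of $G$ need not line up with the given factorization into the $p_{char}(P_{n_i};X)$. Your reduction is sound as far as it goes: every component of $G$ has all eigenvalues in $(-2,2)$, hence is a Dynkin diagram by Smith's theorem, and Perron--Frobenius isolates exactly $a$ components of maximal spectral radius.

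However, the step you defer is not a technical chore; it is where the statement fails. The converse is false once some $n_i=1$ is allowed: $p_{char}(K_{1,3};X)=X^2(X^2-3)$ and $p_{char}(P_5;X)=X(X^2-1)(X^2-3)$, so
$$
p_{char}(K_{1,3}\sqcup K_2;X)=X^2(X^2-1)(X^2-3)=p_{char}(P_5;X)\cdot p_{char}(P_1;X),
$$
yet $K_{1,3}\sqcup K_2$ has a vertex of degree $3$ and is no disjoint union of paths. This is exactly your first sporadic coincidence $K_{1,3}=D_4$ promoted to a genuine cospectral mate; the general mechanism is the identity $p_{char}(D_m;X)\cdot p_{char}(P_{m-2};X)=X\cdot p_{char}(P_{2m-3};X)$, with an analogous trade for $E_6$. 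So no amount of global multiplicity accounting can close your gap, and the theorem as printed needs a hypothesis. The standard repair is to require $n_i\ge 2$ for all $i$, and then your plan does go through; the cleanest finish is via the fourth spectral moment rather than individual eigenvalue multiplicities. For a forest, $\mathrm{tr}(A^4)=2m+4\sum_v\binom{d_v}{2}$; among Dynkin diagrams, a path on at least two vertices contributes exactly $|V|-2$ to $\sum_v\binom{d_v}{2}$, while an isolated vertex and a diagram of type $D$ or $E$ each contribute $|V|-1$. Since $n$, $m$, and the number of components agree on both sides, equality of $\mathrm{tr}(A^4)$ forces the number of these exceptional components to agree, and it is zero on the path side when all $n_i\ge2$; your top-eigenvalue induction then identifies the path lengths. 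The same correction should be made to the paper's theorem itself, and to its claim that ``the same works'' for the other families of disjoint unions.
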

The same works for 
$\DU(\mathrm{Cycle})$,
$\DU(\mathrm{Clique})$ and
$\DU(\mathrm{CBipartite})$.
\fi 
\subsection{The matching polynomials}
Let $m_k(G)$ denote the number of $k$-matchings of a graph $G$ on $n$ vertices.
Let $\mu(G;X)$ be the defect matching polynomial (aka acyclic polynomial)
$$
\mu(G;X) = \sum_{k=0}^{\lfloor n/2 \rfloor} m_k(G) (-1)^k X^{n-2k}
$$
\begin{theorem}[C.D. Godsil and I. Gutman \cite{ar:GodsilGutman1981}]
On forests $F$ we have $\mu(F;X) = p_{char}(F;X)$.
\end{theorem}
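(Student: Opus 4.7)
The plan is to prove this by expanding $p_{char}(F;X) = \det(XI - A_F)$ via the Leibniz formula for determinants and identifying exactly which permutations survive when $F$ is a forest.

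First I would write
$$
p_{char}(F;X) = \det(XI - A_F) = \sum_{\sigma \in S_n} \mathrm{sgn}(\sigma) \prod_{i=1}^n (XI - A_F)_{i,\sigma(i)},
$$
and decompose each permutation $\sigma$ into its disjoint cycles. Fixed points of $\sigma$ contribute a factor $X$, while a cycle of length $\ell \geq 2$ on vertices $i_1, i_2, \ldots, i_\ell$ contributes $\prod_{j}(-A_F)_{i_j, i_{j+1}}$, which is nonzero exactly when $i_1 i_2 \cdots i_\ell i_1$ is a closed walk using distinct edges of $F$, i.e., a cycle of length $\ell$ in $F$.

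Next I would invoke the hypothesis that $F$ is a forest: it has no cycles of length $\geq 3$. Therefore the only cycles of $\sigma$ that can contribute are fixed points and transpositions $(i,j)$, and the latter contribute precisely when $\{i,j\} \in E(F)$. The set of transpositions in $\sigma$ must then be pairwise vertex-disjoint edges, i.e., a matching $M$ of $F$. Conversely, any matching of size $k$ in $F$ gives a unique such permutation (product of $k$ disjoint transpositions with $n-2k$ fixed points).

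Finally I would track the sign and the value. Each transposition contributes $(-A_F)_{i,j}(-A_F)_{j,i} = 1$, each fixed point contributes $X$, and the signature of a product of $k$ disjoint transpositions is $(-1)^k$. Summing over matchings gives
$$
p_{char}(F;X) = \sum_{k=0}^{\lfloor n/2 \rfloor} (-1)^k\, m_k(F)\, X^{n-2k} = \mu(F;X),
$$
which is the defect matching polynomial as defined in the excerpt. The only subtle point, and hence the main thing to get right, is the sign bookkeeping: verifying that the product of the two entries $-A_{ij}$ and $-A_{ji}$ cancels to $+1$ while the signature $(-1)^k$ of the permutation produces the alternating sign matching that appears in $\mu(F;X)$. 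Everything else reduces to the observation that ``cycles of $\sigma$ contributing to $\det$'' coincide with ``cycles in $F$,'' which is where the forest hypothesis does all the work.
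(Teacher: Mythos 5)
Your argument is correct and complete: the paper itself states this result without proof (citing Godsil--Gutman), and your Leibniz-expansion argument is exactly the standard one, a special case of Sachs' coefficient theorem in which the only Sachs subgraphs of a forest are matchings, so that $\det(XI-A_F)=\sum_k(-1)^k m_k(F)X^{n-2k}=\mu(F;X)$. The sign bookkeeping you flag is handled correctly --- each transposition contributes $(-A_{ij})(-A_{ji})=+1$ to the entry product while the signature supplies $(-1)^k$ --- and the forest hypothesis is used precisely where it is needed, to kill all permutation cycles of length at least three.
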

We look for characterizations of
$
\cY_{\mu, \mathrm{Path}},
\cY_{\mu, \mathrm{Cycle}},
\cY_{\mu, \mathrm{Clique}}$ and
$\cY_{\mu, \mathrm{CBipartite}}$.

We need the recursive definitions of the orthogonal polynomials of
Chebyshev, Hermite and Laguerre, cf. \cite{bk:Chihara2011}:
The Chebyshev polynomials $T_n(X)$ and $U_n(X)$ are defined recursively as follows:
$$
T_0(X)=1, T_1(X)=X \mbox{   and   } T_{n+1}(X) = 2X\cdot T_n(X) - T_{n-1}(X)
$$
and
$$
U_0(X)=1, U_1(X)=2X \mbox{   and   } U_{n+1}(X) = 2X\cdot U_n(X) - U_{n-1}(X)
$$
These two recurrence relations are linear in $U_n(X)$ and
their coefficients 
are elements of $\N[X]$ and {\em do not depend on $n$}.

The Hermite polynomials 
$He_n(X)$
are defined recursively as follows:
$$
He_0(X)=1, He_1(X)=X \mbox{   and   } He_{n+1}(X) = X\cdot He_n(X) - n \cdot He_{n-1}(X)
$$
The Laguerre polynomials $L_n(X)$ are defined recursively as follows:
$$
L_0(X)=1, L_1(X)=1-X \mbox{   and   } L_{n+1}(X) = \frac{2n+1-x}{n+1} \cdot L_n(X) - \frac{n}{n+1} L_{n-1}(X)
$$
These recurrence relation are linear in $He_n(X)$ and $L_n(X)$ and
their coefficients 
are elements of $\N[X]$, respectively $\Q[X]$ and {\em do depend on $n$}.

\begin{theorem}[C.D. Godsil and I. Gutman \cite{ar:GodsilGutman1981}]
\label{th:chebyshev}
\ 
\begin{enumerate}[(i)]
\item
$\mu(C_n; 2X) = 2\cdot T_n(X)$
\item
$\mu(P_n; 2X) = U_n(X)$
\item
$\mu(K_n; X) = He_n(X)$
\item
$\mu(K_{n,n}; X) = (-1)^n \cdot L_n(X^2)$
\end{enumerate}
\end{theorem}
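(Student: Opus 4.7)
The plan is to treat the four families in sequence, in each case deriving a three-term recurrence for $\mu(G_n;X)$ from the standard recurrences of the matching polynomial, matching it against the defining recurrence of the corresponding orthogonal polynomial after a suitable substitution ($X \mapsto 2X$ or $X \mapsto X^2$) and possibly a normalization constant, and verifying the initial conditions. The two basic identities I would use throughout are, for any vertex $v$ of $G$ and any edge $e = uv$ of $G$,
\[
\mu(G;X) = X\mu(G-v;X) - \sum_{u \sim v}\mu(G-u-v;X), \qquad \mu(G;X) = \mu(G-e;X) - \mu(G-u-v;X).
\]

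For (ii), deleting an endpoint $v$ of $P_n$ gives $P_{n-1}$ and $P_n-u-v = P_{n-2}$, so $\mu(P_n;X) = X\mu(P_{n-1};X) - \mu(P_{n-2};X)$; the substitution $X \mapsto 2X$ turns this into the Chebyshev $U_n$ recurrence, and $\mu(P_0;2X)=1$, $\mu(P_1;2X)=2X$ match $U_0, U_1$. For (i), applying the edge recurrence to any edge of $C_n$ gives $\mu(C_n;X) = \mu(P_n;X) - \mu(P_{n-2};X)$, so $\mu(C_n;2X) = U_n(X) - U_{n-2}(X)$; the identity $U_n(X) - U_{n-2}(X) = 2T_n(X)$, verified by checking that both sides satisfy the Chebyshev recurrence with matching initial data, then yields (i). For (iii), vertex deletion at any $v$ in $K_n$ leaves $K_{n-1}$, and each of the $n-1$ neighbors $u$ contributes $K_n - u - v = K_{n-2}$, so $\mu(K_n;X) = X\mu(K_{n-1};X) - (n-1)\mu(K_{n-2};X)$, which is exactly the Hermite recurrence with matching base cases $\mu(K_0;X)=1$, $\mu(K_1;X)=X$.

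Part (iv) is the main obstacle: vertex deletion in $K_{n,n}$ escapes the family into the unbalanced graph $K_{n-1,n}$, so one cannot close the recurrence in a single step. I would carry along the auxiliary sequence $b_n := \mu(K_{n-1,n};X)$ alongside $a_n := \mu(K_{n,n};X)$. Vertex deletion in $K_{n,n}$ (where the deleted vertex has $n$ neighbors) and in $K_{n-1,n}$ at a vertex on the larger side (with $n-1$ neighbors) yields the coupled system
\[
a_n = X b_n - n\,a_{n-1}, \qquad b_n = X a_{n-1} - (n-1)\,b_{n-1}.
\]
Solving the first equation for $b_n$ and substituting into the second eliminates $b_n$ and produces the closed second-order recurrence $a_{n+1} = (X^2 - 2n - 1)\,a_n - n^2\,a_{n-1}$. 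A direct rescaling of the Laguerre recurrence shows that an appropriate signed multiple of $L_n(Y)$ (with a normalization that depends on the convention for $L_n$) satisfies this same recurrence in $Y = X^2$, and matching the base cases $a_0 = 1$, $a_1 = X^2 - 1$ yields (iv). A recurrence-free alternative uses the explicit count $m_k(K_{n,n}) = \binom{n}{k}^2 k!$ (choose $k$ left vertices, $k$ right vertices, match them in $k!$ ways) to write $\mu(K_{n,n};X)$ as an explicit sum in $X^2$ and recognize it directly as a rescaled Laguerre polynomial.

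The delicate steps are, first, in the bipartite case, maintaining the auxiliary unbalanced polynomial $b_n$ throughout and eliminating it cleanly to expose the second-order recurrence in $a_n$ alone; and second, pinning down the correct Laguerre normalization, since the standard $L_n$ has rational coefficients while $\mu(K_{n,n};X)$ has integer coefficients. The other three families reduce to a direct bookkeeping with the two matching-polynomial recurrences combined with elementary identities among the Chebyshev and Hermite polynomials.
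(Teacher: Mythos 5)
Your proposal is correct, and in fact the paper does not prove this theorem at all: it is quoted from Godsil and Gutman \cite{ar:GodsilGutman1981}, so there is no in-paper argument to compare against. Your derivation via the two standard matching-polynomial recurrences (vertex deletion and edge deletion) is the classical route and all four reductions check out: the path and cycle cases match the Chebyshev recurrences after $X\mapsto 2X$ together with the identity $U_n-U_{n-2}=2T_n$; the clique case reproduces the Hermite recurrence exactly; and your coupled system for $K_{n,n}$ with the auxiliary sequence $b_n=\mu(K_{n-1,n};X)$ correctly eliminates to $a_{n+1}=(X^2-2n-1)a_n-n^2a_{n-1}$, which is the recurrence satisfied by $(-1)^n n!\,L_n(X^2)$.

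One point you flag as delicate is in fact a genuine discrepancy in the statement as printed: with the normalization the paper uses for the Laguerre polynomials ($L_0=1$, $L_1=1-X$, $L_{n+1}=\frac{2n+1-X}{n+1}L_n-\frac{n}{n+1}L_{n-1}$), the identity in (iv) needs the factor $n!$, i.e.\ $\mu(K_{n,n};X)=(-1)^n\,n!\,L_n(X^2)$. Already for $n=2$ one has $\mu(K_{2,2};X)=X^4-4X^2+2$ while $L_2(Y)=\tfrac12(Y^2-4Y+2)$. Your closed-form check $m_k(K_{n,n})=\binom{n}{k}^2k!$ confirms this, since
\[
\mu(K_{n,n};X)=\sum_{k}(-1)^k\binom{n}{k}^2k!\,X^{2(n-k)}=(-1)^n n!\,L_n(X^2).
\]
So your caution about the Laguerre convention is exactly right; the identity as stated holds only for the historically common normalization $\mathcal{L}_n=n!\,L_n$, not for the $L_n$ defined earlier in this paper.
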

For related theorems, cf. also \cite{ar:Godsil1981a,ar:Gessel1989,ar:NoyRibo04,ar:DiaconisGamburd2004}.

\begin{theorem}
\label{th:unique}
\ 
\begin{enumerate}[(i)]
\item
(\cite{ar:BeezerFarrell1995})
The $C_n$'s are $\mu$-unique.
\item
(\cite{ar:Noy03})
$K_n$ and $K_{n,n}$ are $\mu$-unique.
\item
(\cite[Proposition 14.4.6]{bk:BrouwerHaemers2012})
$P_n$ is $p_{char}$-unique, and, as it is a tree, also $\mu$-unique.
\end{enumerate}
\end{theorem}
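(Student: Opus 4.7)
The plan is to handle the three parts separately, since they correspond to different techniques and different cited sources, and then combine them. A common observation throughout is that from $\mu(G;X) = \sum_k (-1)^k m_k(G) X^{n-2k}$ one can read off $|V(G)|$ (from the degree), $|E(G)| = m_1(G)$ (from the coefficient of $X^{n-2}$), and $m_2(G)$ (from the coefficient of $X^{n-4}$). Combined with the identity $m_2(G) + p_2(G) = \binom{|E|}{2}$, where $p_2(G)$ is the number of paths of length $2$, and $\sum_v d(v)^2 = 2|E(G)| + 2 p_2(G)$, one further recovers $\sum_v d(v)^2$. These three pieces of information are the workhorse.

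For part (iii), I would first reduce to the $p_{char}$ case. From $\mu(G;X) = \mu(P_n;X)$ one reads $|V(G)| = n$ and $|E(G)| = n-1$. A standard counting argument over connected components $C_i$, using $\sum_i |E(C_i)| \geq \sum_i (|V(C_i)|-1) = n - k$, forces the number of components $k$ to equal $1$, so $G$ is a tree. By the Godsil--Gutman identity $\mu(F;X) = p_{char}(F;X)$ for forests $F$, we obtain $p_{char}(G;X) = p_{char}(P_n;X)$, and Proposition \ref{pr:char-unique} yields $G \cong P_n$.

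For part (i), I would begin by reading $|V(G)| = n$ and $|E(G)| = n$. Using $m_2(C_n) = n(n-3)/2$, one computes $\sum_v d(v)^2 = 4n = (2|E|)^2/|V|$. The equality case of Cauchy--Schwarz applied to $\sum_v d(v) = 2n$ then forces $d(v) = 2$ for all $v$, so $G$ is $2$-regular and therefore a disjoint union of cycles $C_{n_1} \sqcup \cdots \sqcup C_{n_k}$ with $\sum n_i = n$. Since $\mu$ is multiplicative on disjoint unions, Theorem \ref{th:chebyshev}(i) gives $\mu(G;X) = \prod_i 2\, T_{n_i}(X/2)$. Comparing largest roots, the largest root of $\mu(C_n;X)$ is $2\cos(\pi/(2n))$, while the largest root on the right is $2\cos(\pi/(2 \max_i n_i))$; matching forces $\max_i n_i = n$ and hence $k=1$, so $G \cong C_n$.

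For part (ii), the $K_n$ case is immediate: from $|V(G)|=n$ and $|E(G)| = \binom{n}{2}$, and the fact that $\binom{n}{2}$ is the maximum possible edge count on $n$ vertices, we get $G = K_n$. The $K_{n,n}$ case is where I expect the main obstacle: the Cauchy--Schwarz argument applied to $|V|=2n$, $|E|=n^2$, $\sum d(v)^2 = 2n^3$ yields $n$-regularity on $2n$ vertices, but there are many such graphs. To close this, I would use extra information encoded in $\mu(K_{n,n};X) = (-1)^n L_n(X^2)$. First, the fact that this polynomial lies in $X^2$, so its real roots are symmetric about $0$, strongly suggests bipartiteness; this can be pushed through by invoking Godsil's path-tree construction, which associates to any $v$ a tree $T(G,v)$ with $p_{char}(T(G,v);X)$ dividing $\mu(G;X)$, and tracking its depth against the extremal zero $\sqrt{\lambda_1(L_n)}$. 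Combining $n$-regularity with bipartiteness and the count $m_n(G) = n!$ of perfect matchings (which is extremal for $n$-regular bipartite graphs on $2n$ vertices by Br\'egman's inequality, with equality precisely at $K_{n,n}$) forces $G \cong K_{n,n}$. This last step, following Noy \cite{ar:Noy03}, is the technical core of the theorem.
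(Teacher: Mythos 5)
The paper itself offers no proof of this theorem: all three parts are quoted from the literature (Beezer--Farrell for the cycles, Noy for $K_n$ and $K_{n,n}$, Brouwer--Haemers for $P_n$), so your reconstruction has to stand on its own. Your preliminary observations are correct (one reads $n(G)$, $m(G)=m_1(G)$ and $m_2(G)$ off $\mu$, and hence $\sum_v d(v)^2$ via $m_2+\sum_v\binom{d(v)}{2}=\binom{m}{2}$), part (i) is a sound argument ($2$-regularity from the equality case of Cauchy--Schwarz, then separating $C_n$ from disjoint unions of shorter cycles by comparing the largest roots $2\cos(\pi/(2n_i))$ of $2T_{n_i}(X/2)$), and the $K_n$ half of part (ii) is immediate from the edge count. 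But two steps fail.

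In part (iii), knowing $n(G)=n$ and $m(G)=n-1$ does \emph{not} make $G$ a tree: your inequality $n-1=\sum_i|E(C_i)|\geq n-k$ only yields $k\geq 1$, and for instance $C_3\sqcup K_1$ has $4$ vertices and $3$ edges without being a forest. So the reduction to the identity $\mu(F;X)=p_{char}(F;X)$ and to Proposition \ref{pr:char-unique} is broken; you need a separate argument excluding components that contain a cycle (e.g.\ via the fact that the largest root of $\mu(P_n;X)$ is $2\cos(\pi/(n+1))<2$ together with a lower bound on the largest matching root of a non-tree component). In part (ii), the bipartiteness step is vacuous as stated: \emph{every} matching polynomial satisfies $\mu(G;-X)=(-1)^{n}\mu(G;X)$, since only the powers $X^{n-2k}$ occur, so the symmetry of the roots of $(-1)^nL_n(X^2)$ about $0$ carries no information whatsoever about $G$, and the appeal to Godsil's path-tree construction is a gesture rather than an argument. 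Without bipartiteness, Br\'egman's inequality does not apply, so the $K_{n,n}$ case --- which you rightly identify as the technical core --- remains open in your write-up.
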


Putting all this together we get:
\begin{theorem}
\ 
\begin{enumerate}[(i)]
\item
A graph $G$ is isomorphic to a cycle $C_n$ iff
$\mu(G;X) = 2\cdot T_n(X)$. In other words, $\cY_{\mu, \mathrm{Cycle}}$
can be characterized using a linear recurrence relation with constant coefficients in $\Z[X]$. 
\item
A graph $G$ is isomorphic to a path $P_n$ iff
$\mu(G;2X) = U_n(X)$. In other words, $\cY_{\mu, \mathrm{Path}}$
can be characterized using a linear recurrence relation with constant coefficients in $\Z[X]$.
\item
A graph $G$ is isomorphic to a complete graph $K_n$ iff
$\mu(G;X) = He_n(X)$. In other words, 
$\cY_{\mu, \mathrm{Clique}}$
can be characterized using a recurrence relation where the coefficients depend on $n$.
\item
A graph $G$ is isomorphic to a complete bipartite graph $K_{n,n}$ iff
$\mu(G;X) = (-1)^n \cdot L_n(X^2)$. In other words, $\cY_{\mu, \mathrm{CBipartite}}$
can be characterized using a recurrence relation where the coefficients depend on $n$.
\end{enumerate}
\end{theorem}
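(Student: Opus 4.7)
The plan is to prove each of the four parts as a direct combination of Theorem \ref{th:chebyshev} (which identifies $\mu(G_n;X)$ with the relevant orthogonal polynomial on the indexed family) and Theorem \ref{th:unique} (which supplies $\mu$-uniqueness for each family). The ``in other words'' clauses then come for free once we recall the recursive definitions of the Chebyshev, Hermite, and Laguerre polynomials stated just above the theorem.

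For each of the four cases I would argue as follows. Fix a family, say the cycles. The forward direction is immediate: if $G \cong C_n$ then by Theorem \ref{th:chebyshev}(i) we have $\mu(G;2X) = \mu(C_n;2X) = 2T_n(X)$. For the converse, suppose $\mu(G;2X) = 2T_n(X)$. By Theorem \ref{th:chebyshev}(i) again, $\mu(C_n;2X) = 2T_n(X)$, so $\mu(G;X) = \mu(C_n;X)$ as polynomials in $X$. Theorem \ref{th:unique}(i) states that $C_n$ is $\mu$-unique, so $G$ is isomorphic to $C_n$. The cases of $P_n$, $K_n$, and $K_{n,n}$ follow the same pattern using parts (ii), (iii), (iv) of Theorem \ref{th:chebyshev} together with parts (iii), (ii), (ii) of Theorem \ref{th:unique} respectively.

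For the ``in other words'' statements, I would simply point to the recursive definitions already displayed in the text. The Chebyshev recurrences $T_{n+1}(X) = 2X\cdot T_n(X) - T_{n-1}(X)$ and $U_{n+1}(X) = 2X\cdot U_n(X) - U_{n-1}(X)$ have coefficients in $\Z[X]$ that do not depend on $n$, which is exactly what is meant by a linear recurrence relation with constant coefficients in $\Z[X]$; this gives (i) and (ii). The Hermite recurrence $He_{n+1}(X) = X\cdot He_n(X) - n\cdot He_{n-1}(X)$ and the Laguerre recurrence $L_{n+1}(X) = \frac{2n+1-X}{n+1}L_n(X) - \frac{n}{n+1}L_{n-1}(X)$ both have coefficients depending explicitly on $n$, giving (iii) and (iv). Membership of $p(X)$ in $\cY_{\mu,\mathrm{Cycle}}$ is then equivalent to $p(X) = 2T_n(X/2)$ for some $n$, i.e., to $p(X)/2$ (rescaled) being the $n$-th iterate of the Chebyshev recurrence, and analogously for the other three families.

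The proof is essentially bookkeeping; the only mild subtlety is to remember that the argument variable is rescaled in parts (i) and (ii) (writing $\mu(G;2X)$ rather than $\mu(G;X)$), so the recurrence characterizes the polynomial $p(2X)$ obtained from $p(X)=\mu(G;X)$ rather than $p(X)$ itself. Since this rescaling is an invertible affine change of variable, it does not affect decidability of membership in the set of cycle-matching polynomials. I do not anticipate any serious obstacle: the real work has already been done in the cited theorems \ref{th:chebyshev} and \ref{th:unique}.
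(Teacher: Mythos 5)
Your proof is correct and is exactly the argument the paper intends: the paper offers no separate proof beyond ``Putting all this together we get,'' meaning precisely the combination of Theorem \ref{th:chebyshev} (for both directions of each identity) with the $\mu$-uniqueness statements of Theorem \ref{th:unique}, plus reading off the displayed recurrences for the ``in other words'' clauses. Your handling of the $X \mapsto 2X$ rescaling is in fact more careful than the theorem's own wording of part (i), which writes $\mu(G;X)=2T_n(X)$ where consistency with Theorem \ref{th:chebyshev}(i) requires $\mu(G;2X)=2T_n(X)$ as you have it.
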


\ifskip\else
A special case of the Main Theorem  of \cite[th:main]{ar:FischerMakowsky08}, together with 
Theorem \ref{th:unique}
gives us
\begin{theorem}
\label{th:recurrence}
Both, $\cY_{\mu, \mathrm{Clique}}$ and $\cY_{\mu, \mathrm{CBipartite}}$
can be characterized using a linear recurrence relation with constant coefficients in $\Z[X]$.
In other words, there are $q,r \in \N$ and polynomials $f_i(X), g_j(X) \in \Z[X]$ for $i \leq q, j \leq r$
such that
$$
\mu(K_{n+q+1};X) = \sum_{i \leq q} f_i(X) \cdot \mu(K_{n+i};X)
$$
and
$$
\mu(K_{n+q+1, n+q+1};X) = \sum_{j \leq q} g_j(X) \cdot \mu(K_{n+j,n+j};X)
$$
\end{theorem}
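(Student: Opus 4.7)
The plan is to combine two ingredients already assembled in the excerpt: the C-finiteness criterion of Theorem \ref{th:c-finite} (the special case of the Fischer--Makowsky main theorem), together with the $\mu$-uniqueness results of Theorem \ref{th:unique}. The overall strategy is to first produce a linear recurrence with constant coefficients in $\Z[X]$ for the indexed polynomial sequences $\mu(K_n;X)$ and $\mu(K_{n,n};X)$, and then upgrade this into an algebraic characterization of the image sets $\cY_{\mu,\mathrm{Clique}}$ and $\cY_{\mu,\mathrm{CBipartite}}$.

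First, I would verify the hypotheses of Theorem \ref{th:c-finite}. On the side of the polynomial, the defect matching polynomial $\mu(G;X)$ is MSOL-definable (the predicate ``$M$ is a matching'' is expressible by saying that every vertex is incident to at most one edge of $M$), so $\mu$ fits the Fischer--Makowsky framework. On the side of the family, the sequence $\{K_n\}_{n\in\N}$ is obtained by a fixed MSOL transduction iterated from a base graph (add one new vertex and connect it to all existing vertices), and $\{K_{n,n}\}$ is obtained by a similar uniform iteration (alternately adding a vertex to each part and joining it to the opposite part). Applying Theorem \ref{th:c-finite} then yields integers $q,r\in\N$ and polynomials $f_0,\ldots,f_{q-1}, g_0,\ldots,g_{r-1} \in \Z[X]$ with
\[
\mu(K_{n+q};X) \;=\; \sum_{i=0}^{q-1} f_i(X)\,\mu(K_{n+i};X), \qquad
\mu(K_{n+r,n+r};X) \;=\; \sum_{j=0}^{r-1} g_j(X)\,\mu(K_{n+j,n+j};X),
\]
both valid for all $n\geq 0$. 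Note that this step is precisely where we trade the $n$-dependent Hermite and Laguerre three-term recurrences of Theorem \ref{th:chebyshev} for recurrences with coefficients in $\Z[X]$ independent of $n$; the price is a higher order $q,r$.

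Second, I would combine this with $\mu$-uniqueness. By Theorem \ref{th:unique}, any graph $H$ with $\mu(H;X) = \mu(K_n;X)$ is isomorphic to $K_n$, and similarly for $K_{n,n}$. Consequently, for a candidate polynomial $s(X)\in\Z[X]$ we have $s(X)\in\cY_{\mu,\mathrm{Clique}}$ if and only if $s(X)$ equals $\mu(K_n;X)$ for some $n$, which in turn can be detected by initial conditions (matching $\mu(K_0;X),\ldots,\mu(K_{q-1};X)$) together with the constant-coefficient recurrence above. The analogous statement for $\cY_{\mu,\mathrm{CBipartite}}$ is obtained the same way. This gives exactly the algebraic characterization claimed by the theorem.

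The main obstacle I expect is the first step: checking that the Fischer--Makowsky hypotheses are satisfied cleanly for the two families, since Theorem \ref{th:c-finite} is stated for a particular mode of uniform definability and one must match the families $K_n$ and $K_{n,n}$ to that framework. The $\mu$-uniqueness step is then essentially bookkeeping, converting ``the sequence $\mu(K_n;X)$ is C-finite'' into ``the set $\cY_{\mu,\mathrm{Clique}}$ is the recognizable set produced by that recurrence.''
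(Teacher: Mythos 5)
Your strategy reproduces the paper's own one-line justification (a special case of the Fischer--Makowsky main theorem combined with Theorem \ref{th:unique}), and you correctly identified the first step as the weak point: it is exactly where the argument breaks. Theorem \ref{th:c-finite} is stated only for families of \emph{bounded tree-width}, and this restriction is not cosmetic. The matching polynomial counts sets of \emph{edges}, so the relevant version of the Fischer--Makowsky theorem requires quantification over edge sets and hence bounded tree-width, not merely that the family be generated by iterating a fixed vertex-adding operation. The families $K_n$ and $K_{n,n}$ have unbounded tree-width, and the paper itself says so explicitly in the remark following Theorem \ref{th:c-finite}: ``The results of \cite{ar:FischerMakowsky08} cannot be applied to $K_n, K_{n,n}$ because the sequences of graphs $K_n, K_{n,n}$ have unbounded tree-width.'' (It is no accident that the theorem you were asked to prove sits inside a conditional block that is excluded from the compiled version of the paper.)

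Worse, the statement is actually false, so no repair of the first step can succeed. Suppose there were $q$ and $f_i(X)\in\Z[X]$, independent of $n$, with $\mu(K_{n+q+1};X)=\sum_{i\le q}f_i(X)\,\mu(K_{n+i};X)$ for all $n$. Evaluating at $X=0$ gives an integer sequence satisfying a fixed-order linear recurrence with constant integer coefficients $f_i(0)$, and any such sequence is bounded by $C\,n^{q}\rho^{n}$ for constants $C,\rho$, i.e., grows at most exponentially. But $\mu(K_{2m};0)=(-1)^{m}(2m-1)!!$, the signed number of perfect matchings of $K_{2m}$, which grows super-exponentially; likewise $\mu(K_{n,n};0)=(-1)^{n}\,n!$. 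This is precisely the phenomenon the paper's remark warns about: recurrences whose coefficients depend on $n$ (here the Hermite and Laguerre three-term recurrences of Theorem \ref{th:chebyshev}) need not yield C-finite sequences ``because the coefficients may grow too fast.'' Your second step, invoking $\mu$-uniqueness to convert C-finiteness into a characterization of $\cY_{\mu,\mathrm{Clique}}$ and $\cY_{\mu,\mathrm{CBipartite}}$, is sound in itself, but there is no constant-coefficient recurrence for it to latch onto.
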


\begin{remarks}
\ 
\begin{enumerate}[(i)]
\item
More generally,  given a graph polynomial $P(G;\bX)$ and a sequence $G_n$ of graphs
the authors of \cite{ar:FischerMakowsky08} formulate  sufficient conditions on the form of $P(G;\bX)$ and $\{G_n\}$ such that
the polynomials  $P(G_n;\bX)$
satisfy
a linear recurrence relation with constant coefficients in $\Z[X]$.
The conditions are based on definability criteria in logic. 
If additionally all the graphs $G_n$ are $P$-unique we get a characterization as follows:
Let $p_n(X) = P(G_n;X)$. Then $P(H;X) = p_n(X)$ iff $H$ is isomorphic to $G_n$.
\item
The recurrence relation characterizing  $\cY_{\mu, \mathrm{Clique}}$ and $\cY_{\mu, \mathrm{CBipartite}}$
with Hermite and Laguerre polynomials is short (we need only the two immediate previous values) and explicit, 
but the coefficients depend on $n$.
\item
Theorem \ref{th:recurrence} is only an existence theorem. We do not know explicit linear recurrence
relations for $\mu(K_n;X)$ or $\mu(K_{n,n};X)$ where the coefficients do not depend on $n$,
nor do we know how many previous terms are needed, only that this number also does not depend on $n$.
\end{enumerate}
\end{remarks}
\fi 

\ifsubmit\else
\begin{framed}
\noindent
File: NREV-fima
\\
Label:  se:abstract-theorem
\end{framed}
\fi 
\subsection{An abstract theorem}
\label{se:abstract-theorem}
Our discussion of the characteristic polynomial can be formulated abstractly.
We state the following observation as a theorem.

\begin{theorem}
\label{th:abstract}
Let $\bP$ be a graph polynomial and $\cC=\{G_n: n \in \N \}$ be given as a sequence of graphs.
Assume the following:
\begin{enumerate}[(i)]
\item
The sequence of polynomials $\bP(G_n;\bX)$ satisfies some recurrence relation.
\item
Each $G_n$ is $\bP$-unique.
\end{enumerate}
Then
$\cY_{P, \cC}$  is characterized algebraically by the property:
$H$ is isomorphic to $G_n$ iff
$\bP(H;\bX) = \bP(G_n;\bX)$. This can be checked using the recurrence relations.
\end{theorem}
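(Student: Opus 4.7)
The plan is to observe that this theorem is essentially a direct combination of the two hypotheses with the definition of $\cY_{\bP,\cC}$. Unfolding the definition, $\cY_{\bP,\cC} = \{\bP(G_n;\bX) : n \in \N\}$, so characterizing $\cY_{\bP,\cC}$ reduces to describing the sequence $\bP(G_n;\bX)$ and saying when a given polynomial coincides with a term of this sequence.

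First I would use hypothesis (i): the recurrence relation on the sequence $\bP(G_n;\bX)$ determines it from finitely many initial terms and the recurrence coefficients, so $\cY_{\bP,\cC}$ admits a purely algebraic description as the set of polynomials obtained by iterating the recurrence. Given a candidate $p(\bX) \in \Z[\bX]$, the question ``$p(\bX) \in \cY_{\bP,\cC}$?'' becomes the algebraic question of whether $p(\bX)$ is a term of the recurrent sequence, and can be tested by generating the terms and comparing coefficients. This accounts for the first half of the claim and the phrase ``this can be checked using the recurrence relations''.

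Second I would invoke hypothesis (ii): $\bP$-uniqueness of $G_n$ means, by definition, that any graph $H$ with $\bP(H;\bX)=\bP(G_n;\bX)$ is isomorphic to $G_n$; the converse direction is trivial from isomorphism invariance of $\bP$. Putting the two directions together yields the biconditional ``$H \cong G_n$ iff $\bP(H;\bX)=\bP(G_n;\bX)$'', and equality on the right-hand side is tested against the recurrence-generated terms from the previous step.

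The only real subtlety, and the main obstacle to reading this as a genuine decision procedure, is the termination of the search for a matching $n$. In the intended applications (the families $P_n, C_n, K_n, K_{n,n}$ with $\bP = p_{char}$ or $\mu$), the degree of $\bP(G_n;\bX)$ is strictly monotone in $n$, so the search halts as soon as $\deg \bP(G_n;\bX) > \deg p(\bX)$. The statement as given is purely an algebraic characterization and does not itself require such monotonicity, but I would include a brief remark noting that degree-monotonicity is precisely the extra ingredient that makes the characterization algorithmically effective, tying this back to the concrete instances treated in Section \ref{se:recur}.
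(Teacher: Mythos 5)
Your argument is correct and matches the paper's treatment: the paper explicitly presents this result as an observation stated as a theorem and gives no separate proof, since it follows immediately from unfolding the definition of $\cY_{\bP,\cC}$, combining $\bP$-uniqueness with isomorphism-invariance of $\bP$, and appealing to the recurrence for the algebraic description, exactly as you do. Your closing remark on degree-monotonicity as the ingredient needed for effective termination is a sensible addition but goes beyond what the statement requires.
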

One can also formulate an analogue of this theorem for families of graphs $G_{n_1, \ldots, n_k}$
depending on $k$ indices.

We look also at the following indexed families of graphs:
\begin{description}
\item[$W_n$:] The wheels $C_n \bowtie K_1$.
\item[$L_n$:] The ladders $L_n = C_n \times K_2$.
\item[$M_n$:] The M\"obius ladders $M_n$ are obtained from $C_{2n}$ by connecting any pair of opposite vertices.
\item[$C_n^2$:] The square of the cycle $C_n$ obtained by connecting any two vertices of distance two.
\item[$Grid_{n,m}$:] The square grids of size $(n \times m)$.
\end{description}
For a graph polynomial $\bP$,
an indexed family $G_n$ of graphs is $\bP$-recursive if the sequence of polynomials $\bP(G_n;\bX)$
is C-finite.
Using the main theorem from \cite{ar:FischerMakowsky08}
one can prove the following for indexed families of graphs $G_n$ of bounded tree-width.
\begin{theorem}
\label{th:c-finite}
Let $m_0$ be fixed.
The families $P_n, C_n, Grid_{n,m_0}, W_n, L_n, M_n, C_n^2$ 
are all C-finite for the graph polynomials  $p_{char}, \mu, \chi$, and $T$.
\end{theorem}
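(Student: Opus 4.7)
The plan is to invoke the main theorem of \cite{ar:FischerMakowsky08}, which furnishes sufficient conditions under which the sequence $\bP(G_n;\bX)$ satisfies a linear recurrence with coefficients in $\Z[\bX]$ independent of $n$, i.e., is C-finite in the sense of Section \ref{se:recur}. The two ingredients required are (a) that $\bP$ is definable in (counting) monadic second-order logic, and (b) that the family $\{G_n\}$ admits a uniform recursive construction, typically in the form of a sequence of bounded-width tree-decompositions obtained by iteratively glueing a bounded-size gadget along a bounded-size boundary. The strategy is then to verify (a) for each of $p_{char}, \mu, \chi, T$ and (b) for each of the listed families, and to concatenate these verifications with the cited theorem.

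First, I would observe that each family has bounded tree-width with a recursive presentation. Concretely, $P_n$ (tree-width $1$) grows by attaching a pendant edge; $C_n$ (tree-width $2$) is obtained from $P_n$ by closing the last edge, which fits a bounded-boundary iterative scheme; $Grid_{n,m_0}$ with $m_0$ fixed has tree-width at most $m_0$ and is built by appending a column of $m_0$ vertices; $W_n, L_n, M_n, C_n^2$ are all of tree-width at most four and each admits a one-step recursive construction with a bounded boundary (a ladder adds one rung, a Möbius ladder the same with a twist encoded at the finish, the squared cycle splices in a new vertex together with its two local second-neighbor edges, and the wheel adds a spoke together with an arc). Each such description can be cast as a fixed MSOL-transduction (or, equivalently, as a word over a finite alphabet of gluing operations on a bounded-width tree-decomposition).

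Second, I would confirm that the four polynomials are CMSOL-definable in the framework of \cite{ar:FischerMakowsky08}. For $\chi$ the definition is via summing over proper colorings; for $T$ one uses the rank-nullity expansion together with the MSOL-definability of connectivity; for $\mu$ one uses the generating-function expansion over matchings; for $p_{char}$ one uses Sachs' theorem, which expresses the characteristic polynomial as a signed sum over linear subgraphs (disjoint unions of edges and cycles), a property straightforwardly MSOL-definable. Each of these is already catalogued in the MSOL graph-polynomial literature.

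Combining these two inputs, the Fischer--Makowsky theorem applies uniformly in each pair (family, polynomial) and yields a C-finite recurrence with coefficients in $\Z[\bX]$. The main obstacle in executing this plan is bookkeeping rather than conceptual: one must exhibit for each of the less standard families, especially $M_n$ and $C_n^2$, the uniform finite set of gluing operations that produces $G_{n+1}$ from $G_n$ (the Möbius twist and the second-neighbor rerouting must be encoded by a fixed-size boundary, not a growing one). Once this is done, invoking \cite{ar:FischerMakowsky08} is mechanical, and Theorem \ref{th:c-finite} follows as a single corollary.
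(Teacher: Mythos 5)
Your proposal follows exactly the route the paper takes: the paper proves Theorem \ref{th:c-finite} by directly invoking the main theorem of \cite{ar:FischerMakowsky08} for indexed families of bounded tree-width, with the (C)MSOL-definability of $p_{char}, \mu, \chi, T$ and the uniform recursive constructibility of each family left as known facts from the literature. Your write-up simply makes explicit the bookkeeping (tree-width bounds, gluing descriptions, definability of each polynomial) that the paper deliberately suppresses to stay logic-free, so the two arguments coincide in substance.
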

\begin{remark}
\begin{enumerate}
\item
Actually, the sequences from Theorem \ref{th:c-finite} are C-finite for every graph polynomial definable 
in Monadic Second Order Logic ($\MSOL$), such as the independence polynomial, \cite{pr:LevitMandrescu05},
and the edge elimination polynomial $\xi(G;X,Y,Z)$, \cite{ar:TittmannAverbouchMakowsky10}.
However, we do not want in this paper  to get involved with definability theory
or the formalisms of (Monadic) Second Order Logic, i.e., we want to keep it
logic-free. 
\item
The way
Theorem \ref{th:c-finite} is stated, it is non-constructive, because it does not say anything
about the form of the recurrence relation. It only asserts C-finiteness, without giving
the coefficients or the depth of the recursion.
\item
The results of \cite{ar:FischerMakowsky08} cannot be applied to
$K_n, K_{n,n}$  because the sequences of graphs $K_n, K_{n,n}$ have unbounded tree-width.
In fact the resulting families of chromatic and Tutte polynomials are not C-finite,
\cite{ar:BiggsDamerellSand72}.
In general, linear recurrence relations for a sequence of polynomials
where the coefficients depend on $n$ are not C-finite, because the coefficients may grow too fast.
\end{enumerate}
\end{remark}
The following is folklore for the chromatic polynomial and due to I. Gessel \cite{ar:Gessel1995}
for the Tutte polynomial.
\begin{theorem}
The families $K_n, K_{n,n}$ satisfy recurrence relations with the coefficients depending on $n$
for the chromatic and Tutte polynomials:
$$ \chi(K_n;X) = (X-n+1) \cdot \chi(K_{n-1};X) $$
$$ T(K_n;X,Y) = \sum_{k=1}^n { n-1 \choose k-1} (X+Y+Y^2+ \ldots + Y^{k-1}) \cdot T(K_n;1,Y)\cdot T(K_{n-k}:X,Y)$$
and similar for $K_{n,n}$.
\end{theorem}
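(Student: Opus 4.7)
The first identity is immediate. In any proper coloring of $K_n$ with at most $X$ colors, all $n$ vertices must receive distinct colors, so
$$\chi(K_n;X) = X(X-1)(X-2)\cdots(X-n+1),$$
and the recurrence $\chi(K_n;X) = (X-n+1)\chi(K_{n-1};X)$ follows by peeling off the last factor. This step is essentially a one-line observation.

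For the Tutte polynomial recurrence on $K_n$, the plan is to start from Whitney's rank--nullity expansion
$$T(K_n;X,Y) \;=\; \sum_{A \subseteq E(K_n)} (X-1)^{c(A)-1}(Y-1)^{|A|-n+c(A)},$$
and to partition the spanning subgraphs $A$ according to the vertex set $S$ of the connected component containing a fixed vertex $v_1$. Setting $|S|=k$ gives the binomial factor $\binom{n-1}{k-1}$ from the choice of $S\setminus\{v_1\}$; the edges inside $S$ form a connected spanning subgraph of $K_k$ and contribute $T(K_k;1,Y)$ (the specialization $X=1$ enforcing connectivity); the edges inside $V\setminus S$ contribute $T(K_{n-k};X,Y)$; and the geometric factor $(X+Y+Y^2+\cdots+Y^{k-1})$ arises after the summation $\sum_{j=0}^{k-1}Y^j$ collapses the contribution of the cut edges between $S$ and $V\setminus S$. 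Gessel's original derivation in \cite{ar:Gessel1995} executes this cleanly via exponential generating functions and a functional equation in $u = \sum_n T(K_n;X,Y)u^n/n!$, and I would follow that route to avoid the subtle bookkeeping that the direct subgraph partition entails.

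For $K_{n,n}$ the same strategy applies. The chromatic polynomial satisfies
$$\chi(K_{n,n};X) \;=\; \sum_{k=0}^{n}\binom{X}{k}\,k!\,S(n,k)\,(X-k)^n,$$
because the two sides of the bipartition must use disjoint color palettes; an $n$-dependent recurrence is then extracted by the standard three-term identity on Stirling numbers of the second kind. For the Tutte polynomial a parallel Gessel-style formula is obtained by grouping spanning subgraphs according to the bipartitioned structure of the component of a fixed vertex, which can again be handled most cleanly by an exponential generating function in the two part-sizes.

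The main obstacle is the exact form of the Tutte identity: the coefficient $(X+Y+\cdots+Y^{k-1})$ and the appearance of $T(K_n;1,Y)$ rather than $T(K_k;1,Y)$ on the right-hand side of the stated formula (which looks like a typographical slip) must be verified against a careful generating-function computation. I expect that rechecking Gessel \cite{ar:Gessel1995} and redoing the convolution in the $u$-variable exponential generating function is considerably safer than trying to push a purely combinatorial subgraph-partition argument through without error.
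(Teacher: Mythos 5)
The paper offers no proof of this theorem at all: the chromatic recurrence is dismissed as folklore and the Tutte identity is cited to Gessel \cite{ar:Gessel1995}, so there is no in-paper argument to compare yours against. Your treatment of the chromatic part is correct and complete, for both $K_n$ and $K_{n,n}$, and you are right to flag $T(K_n;1,Y)$ on the right-hand side as an apparent slip for $T(K_k;1,Y)$ (as written, the identity is not even dimensionally sensible as a recurrence).

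The gap is in the combinatorial sketch for the Tutte identity. If you partition the Whitney rank--nullity sum by the vertex set $S$ (with $|S|=k$) of the component of $v_1$ in $(V,A)$, then by definition $A$ contains \emph{no} edges between $S$ and $V\setminus S$; the sum over the edges inside $S$ gives $T(K_k;1,Y)$ and the sum over the edges inside $V\setminus S$ gives $(X-1)^{c(A_2)}(Y-1)^{|A_2|-(n-k)+c(A_2)}$ summed over all $A_2$, i.e.\ $(X-1)\,T(K_{n-k};X,Y)$ for $k<n$ (and $1$ for $k=n$) --- not the factor $(X+Y+\cdots+Y^{k-1})$. You cannot recover that geometric factor by ``summing over the cut edges,'' because the moment you admit an edge between $S$ and $V\setminus S$, the set $S$ ceases to be the component of $v_1$ and the decomposition is no longer a bijection. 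Gessel's factor arises from a genuinely finer decomposition (his exponential-generating-function formalism for graphs with a distinguished connected piece, tracking external activity), and your stated fallback --- redoing the convolution in the EGF variable following \cite{ar:Gessel1995} --- is the right way to close this; but the direct subgraph-partition argument, as you describe it, would not produce the claimed coefficient.
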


\subsection{The chromatic and the Tutte polynomials}
To give further applications of 
Theorem \ref{th:abstract} we collect some results from \cite{bk:DongKohTeo2005,ar:MierNoy04}
on $\chi$-unique and $T$-unique graphs.

\begin{theorem}
\label{th:T-unique}
\begin{enumerate}
\item
Let $t_m$ be a tree on $m$ edges. Then $T(t_m;X) = X^m$. Hence the paths $P_n$ are neither
$\chi$-unique nor $T$-unique.
\item
$C_n, K_n, K_{n,m}$ are all $\chi$-unique, hence $T$-unique.
\item
$W_n, L_n, M_n$ and $C_n^2$ are $T$-unique but not $\chi$-unique. 
\end{enumerate}
\end{theorem}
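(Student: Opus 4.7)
The plan is to treat the three parts separately. Part (1) is a short direct computation. Part (2) reduces the claim about $T$-uniqueness to the classical $\chi$-uniqueness results via a single general implication. Part (3) must invoke the detailed case analyses of \cite{ar:MierNoy04}, and is the main obstacle.

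For (1), I would compute $T(t_m;X,Y)$ by induction on $m$ using deletion--contraction. Every edge of a tree is a bridge, so the recursion specializes to $T(t_m;X,Y) = X \cdot T(t_m/e;X,Y)$ for any edge $e$, and $t_m/e$ is again a tree, this time on $m-1$ edges. With the base case $T(t_0;X,Y)=1$ this yields $T(t_m;X,Y)=X^m$. Consequently any two trees on the same number of edges are $T$-equivalent, and the standard specialization $\chi(G;X) = (-1)^{n(G)-k(G)} X^{k(G)} T(G;1-X,0)$ gives $\chi(t_m;X) = X(X-1)^m$ for every tree. For $n\geq 4$ there exist non-isomorphic trees on $n$ vertices (e.g.\ $P_n$ and the star $K_{1,n-1}$), so $P_n$ is neither $\chi$-unique nor $T$-unique.

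For (2), I would first record the implication: if $G$ is $\chi$-unique, then $G$ is $T$-unique. This is immediate from the specialization above, since $T(H;X,Y)=T(G;X,Y)$ forces $\chi(H;X)=\chi(G;X)$, and $\chi$-uniqueness then yields $H\cong G$. It therefore suffices to recall that $C_n$, $K_n$, and $K_{n,m}$ are $\chi$-unique, as catalogued in \cite{bk:DongKohTeo2005}. The $K_n$ case follows from the factorization $\chi(K_n;X)=X(X-1)\cdots(X-n+1)$, which forces $|V(H)|=n$ and chromatic number $n$. The $C_n$ case uses the closed form $\chi(C_n;X)=(X-1)^n+(-1)^n(X-1)$ and a coefficient analysis identifying the number of edges and the girth. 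The $K_{n,m}$ case is more delicate and relies on coefficient invariants pinning down the bipartition; I would simply cite the relevant chapter of \cite{bk:DongKohTeo2005}.

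For (3), the failure of $\chi$-uniqueness is established by exhibiting explicit chromatically equivalent non-isomorphic partners for each of $W_n, L_n, M_n, C_n^2$, via the edge-swap constructions of \cite{ar:MierNoy04} that preserve $\chi$ but alter cycle structure. The $T$-uniqueness is the hard part: one must show that the strictly richer information in $T(G;X,Y)$ suffices to reconstruct each of these families up to isomorphism. Following \cite{ar:MierNoy04}, I would combine the rank--nullity generating-function interpretation of $T$---which yields the number of spanning subgraphs of given rank and corank---with additional specializations (the reliability and flow polynomials) to extract invariants such as small-cycle multiplicities, the degree sequence forced by low-order Tutte coefficients, and the structure of the cocycle space; these invariants, applied family-by-family, pin down the graphs. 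The main obstacle is exactly this reconstruction step: it is not a single uniform argument but a separate structural case analysis for each of $W_n, L_n, M_n, C_n^2$, and its subtlety is precisely what distinguishes $T$-uniqueness from $\chi$-uniqueness here.
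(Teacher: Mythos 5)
The paper offers no proof of this theorem at all: it is explicitly presented as a collection of results quoted from \cite{bk:DongKohTeo2005} and \cite{ar:MierNoy04}, so your citation-based treatment of parts (2) and (3) matches the paper's intent, and your deletion--contraction computation for part (1) is a correct and welcome filling-in of the one part that is elementary (every edge of a tree is a bridge, so $T(t_m;X,Y)=X\cdot T(t_m/e;X,Y)=\dots=X^m$, and the specialization gives $\chi(t_m;X)=X(X-1)^m$).

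There is, however, one genuine gap in your part (2): the claim that ``$T(H;X,Y)=T(G;X,Y)$ forces $\chi(H;X)=\chi(G;X)$'' is false as stated, because the Tutte polynomial determines the rank $n(G)-k(G)$ and the nullity but \emph{not} $n(G)$ and $k(G)$ separately, and the prefactor $(-1)^{n-k}X^{k}$ in the specialization needs $k$. A concrete counterexample to your intermediate claim: $P_3$ and $K_2\sqcup K_2$ both have Tutte polynomial $X^2$, yet $\chi(P_3;X)=X(X-1)^2$ while $\chi(K_2\sqcup K_2;X)=X^2(X-1)^2$. So ``$\chi$-unique $\Rightarrow$ $T$-unique'' is not immediate; in fact de Mier and Noy are careful to point out that neither uniqueness notion formally implies the other. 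The repair for the graphs at hand is short but must be said: $C_n$, $K_n$ and $K_{n,m}$ (for the parameter ranges where the claim is true) are $2$-connected, the Tutte polynomial factors over the blocks, and a product of two or more block polynomials (each with zero constant term) has vanishing linear coefficients; since these graphs have nonzero coefficient of $X^1$ in $T$, any $H$ with the same Tutte polynomial and no isolated vertices is itself $2$-connected, hence connected, hence has $k(H)=1$ and the same number of vertices, and only then does the specialization yield $\chi(H;X)=\chi(G;X)$ and let you invoke $\chi$-uniqueness. (Alternatively one can work with s.d.p.-style comparisons, where $n$, $m$, $k$ are supplied alongside the polynomial, which is the convention the surrounding sections of the paper lean on.) With that one step added, your argument for (2) is complete; parts (1) and (3) stand as written.
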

Now, Theorem \ref{th:T-unique} allows us to give more algebraic characterizations using recurrence relations
for these sequences via the chromatic and the Tutte polynomial.

\ifsubmit\else
\begin{framed}
\noindent
File: NREV-dp
\\
Label:  se:abstract-theorem
\end{framed}
\fi 
\newif
\ifejc
\ejctrue
\section{Distinctive power}
\label{se:linalg}
\subsection{s.d.p.-equivalence and d.p-equivalence of graph properties}
A class of graphs $\cS$ which consists of all graphs having the same number of vertices, edges and connected components
is called a {\em similarity class}.

Let $\cC$ be a graph property. Two graphs $G,H$ are $\cC$-equivalent if either both are in $\cC$
or both are not in $\cC$. 
We denote by $\bar{\cC}$ the graph property $\cG - \cC$.

Therefore we have:
\begin{proposition} 
\label{pr:dp-properties}
\label{th:char-0}
\begin{enumerate}[(i)]
\item
Two
graph properties
$\cC_1$ and $\cC_2$ are d.p.-equivalent iff 
either  $\cC_1 = \cC_2$ or $\cC_1  = \bar{\cC_2}$.
\item
Two
graph properties
$\cC_1$ and $\cC_2$ are s.d.p.-equivalent iff for every similarity class $\mathcal{S}$
either  $\cC_1 \cap \cS = \cC_2 \cap \cS$ or $\cC_1 \cap \cS  = \bar{\cC_2} \cap \cS$.
\end{enumerate}
\end{proposition}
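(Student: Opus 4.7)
The plan is to identify each graph property $\cC$ with the graph polynomial $\mathbf{1}_{\cC}:\cG \to \{0,1\} \subseteq \Z[X]$ (its indicator function, viewed as a constant polynomial). Then the $\cC$-equivalence relation defined in the text, namely ``$G_1 \sim_{\cC} G_2$ iff both lie in $\cC$ or both lie in $\bar{\cC}$'', is exactly the equivalence relation induced on graphs by $\mathbf{1}_{\cC}$. So d.p.-equivalence of $\cC_1$ and $\cC_2$ becomes the statement that the equivalence relations $\sim_{\cC_1}$ and $\sim_{\cC_2}$ are identical on $\cG$; similarly for s.d.p.-equivalence on similarity classes.

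For part (i), I would observe that $\sim_{\cC}$ has the unordered partition $\{\cC,\bar{\cC}\}$ of $\cG$ as its set of equivalence classes (which collapses to the one-block partition $\{\cG\}$ in the degenerate cases $\cC = \emptyset$ or $\cC = \cG$). Two equivalence relations coincide iff they induce the same partition, so $\sim_{\cC_1} = \sim_{\cC_2}$ forces $\{\cC_1,\bar{\cC_1}\} = \{\cC_2,\bar{\cC_2}\}$ as unordered partitions. Matching the blocks yields the two alternatives $\cC_1 = \cC_2$ and $\cC_1 = \bar{\cC_2}$. The converse direction is immediate, since complementation preserves $\sim_{\cC}$: clearly $\sim_{\cC} = \sim_{\bar{\cC}}$.

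For part (ii), the argument is the same, carried out one similarity class $\cS$ at a time. By definition, $\cC_1$ and $\cC_2$ are s.d.p.-equivalent iff $\sim_{\cC_1}$ and $\sim_{\cC_2}$ agree on every similarity class $\cS$. Restricting to a fixed $\cS$, the relation $\sim_{\cC_i}$ partitions $\cS$ into $\{\cC_i \cap \cS,\ \bar{\cC_i}\cap \cS\}$, and the same partition-matching argument as in (i) gives the stated dichotomy within $\cS$.

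The only subtlety, and thus the main thing I would be careful about, is the degenerate situation in which $\cC_1 \in \{\emptyset, \cG\}$ (or the analogous degeneracy inside some similarity class in part (ii)). There the partition $\{\cC_1,\bar{\cC_1}\}$ has only one non-empty block, so the equivalence relation is the full relation, which is equal to $\sim_{\cC_2}$ iff $\cC_2 \in \{\emptyset, \cG\}$; this still lines up with one of the two alternatives $\cC_1 = \cC_2$ or $\cC_1 = \bar{\cC_2}$, so the statement holds uniformly without needing a separate case.
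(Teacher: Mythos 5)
Your proof is correct, and while it rests on the same elementary observation as the paper's, it is organized along a genuinely different (and tighter) route. The paper proves the nontrivial direction of (i) by a four-fold inclusion argument: it first asserts that $\cC_1 \subseteq \cC_2$ or $\cC_1 \subseteq \bar{\cC_2}$ (the justification being that graphs $G \in \cC_1 \cap \cC_2$ and $H \in \cC_1 \cap \bar{\cC_2}$ would be $\cC_1$-equivalent but not $\cC_2$-equivalent), then invokes symmetry to obtain three analogous disjunctions, and leaves the final combination to the reader. You instead note that the $\cC$-equivalence relation is completely determined by its set of equivalence classes, namely the nonempty blocks among $\{\cC, \bar{\cC}\}$, so that d.p.-equivalence forces the two partitions $\{\cC_1,\bar{\cC_1}\}$ and $\{\cC_2,\bar{\cC_2}\}$ of $\cG$ to coincide, and matching blocks yields the dichotomy in one step. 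What your approach buys is the elimination of the case analysis, an actually completed argument rather than a sketch, and an explicit treatment of the degenerate cases $\cC \in \{\emptyset,\cG\}$, which the paper glosses over; what the paper's approach buys is only that it works directly with the definition of d.p.-comparability via implications between equivalences, without needing to pass to partitions. Part (ii) is handled the same way in both treatments, by relativizing to a fixed similarity class $\cS$. Your preliminary device of encoding $\cC$ as the indicator polynomial $\mathbf{1}_{\cC}$ is also a sensible way to make the statement precise, since the paper defines d.p.-equivalence only for graph polynomials and extends it to properties only implicitly through the $\cC$-equivalence relation introduced just before the proposition.
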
 
\begin{proof}
(i):
It is straightforward that
if $\cC_2 = \cC_1 $ or $\cC_2  = \bar{\cC_1}$ then
$\cC_1$ and $\cC_2$ are d.p.-equivalent. 
\\
For the other direction, we prove first that 
$\cC_1 \subseteq \cC_2 $ or $\cC_1 \subseteq \bar{\cC_2}$.
\\
By a symmetrical argument, we then prove also
$\cC_2 \subseteq \cC_1$ or $\cC_2 \subseteq \bar{\cC_1}$,
$\bar{\cC_1} \subseteq \cC_2$ or $\bar{\cC_1} \subseteq \bar{\cC_2}$ and
$\bar{\cC_2} \subseteq \cC_1$ or $\bar{\cC_2} \subseteq \bar{\cC_1}$.
Now the result follows.
\\
(ii): Fix $\cS$. The proof is the same but relativized to $\cS$.
\end{proof}
\begin{remark}
If $\cC_1$ and $\cC_2$ are s.d.p.-equivalent it is possible that for a similarity class $\cS$ we have
$\cC_1 \cap \cS = \cC_2 \cap \cS$ but for another similarity class $\cS'$ we have
$\cC_1 \cap \cS' = \bar{\cC_2} \cap \cS'$.
\end{remark}

\begin{proposition}
\label{prop:dp-incomparable}
\begin{enumerate}[(i)]
\item
Let $\cC_1$ and $\cC_2$ be two graph properties.
Assume  that
both $\cC_1$ and $\cC_2 $ are not empty and do not contain all finite graphs,
and that $\cC_1  \neq \cC_2 $ and $\cC_1  \neq \bar{\cC_2}$.
Then $\cC_1$ and $\cC_2$ are  d.p.-incomparable, i.e., $\cC_1 \not \leq_{d.p.} \cC_2$ and $\cC_2 \not \leq_{d.p.} \cC_1$.
\item
Let $\cC_1$ and $\cC_2$ be two graph properties.
Assume there is a similarity class $\cS$ such that
both $\cC_1 \cap \cS$ and $\cC_2 \cap \cS$ are not empty and do not contain all finite graphs in $\cS$,
 and that $\cC_1 \cap \cS \neq \cC_2 \cap \cS$ and $\cC_1  \cap \cS\neq \bar{\cC_2} \cap \cS$.
Then $\cC_1$ and $\cC_2$ are  s.d.p.-incomparable, i.e., $\cC_1 \not \leq_{s.d.p.} \cC_2$ and $\cC_2 \not \leq_{s.d.p.} \cC_1$.
\end{enumerate}
\end{proposition}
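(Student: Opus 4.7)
The plan is as follows. Reading a graph property $\cC$ as the two-block partition $\{\cC, \bar{\cC}\}$ of $\cG$, the relation $\cC_1 \leq_{d.p.} \cC_2$ asserts that each block of $\{\cC_2, \bar{\cC_2}\}$ is contained in a single block of $\{\cC_1, \bar{\cC_1}\}$. A four-way case distinction on which $\cC_1$-block receives $\cC_2$ and which receives $\bar{\cC_2}$ yields exactly four possibilities for $\cC_1$: namely $\cG$, $\emptyset$, $\cC_2$, or $\bar{\cC_2}$. Indeed, if both $\cC_2, \bar{\cC_2} \subseteq \cC_1$ then their union forces $\cC_1 = \cG$; if both lie in $\bar{\cC_1}$ then $\cC_1 = \emptyset$; and the two mixed cases give $\cC_1 = \cC_2$ and $\cC_1 = \bar{\cC_2}$ respectively by taking complements.

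For part (i), the four hypotheses on $\cC_1$ (nonempty, not all of $\cG$, $\cC_1 \neq \cC_2$, $\cC_1 \neq \bar{\cC_2}$) rule out each of these alternatives, so $\cC_1 \not\leq_{d.p.} \cC_2$. Exchanging the roles of $\cC_1$ and $\cC_2$, and noting that $\cC_2 \neq \bar{\cC_1}$ is equivalent to $\cC_1 \neq \bar{\cC_2}$, gives $\cC_2 \not\leq_{d.p.} \cC_1$, hence d.p.-incomparability.

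Part (ii) will be handled by exactly the same argument, relativised to the similarity class $\cS$: the four alternatives become $\cC_1 \cap \cS \in \{\cS, \emptyset, \cC_2 \cap \cS, \bar{\cC_2} \cap \cS\}$, each excluded by the relativised hypotheses, and the symmetric step is identical. I expect no real obstacle; the statement is essentially a one-sided strengthening of Proposition~\ref{th:char-0}, and the only point that requires care is keeping the distinction between d.p.-equivalence (two-sided) and incomparability (failure of both one-sided refinements), so that both $\cC_1 \not\leq_{d.p.} \cC_2$ and $\cC_2 \not\leq_{d.p.} \cC_1$ are verified rather than just inequivalence.
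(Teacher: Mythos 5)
Your proof is correct, and it takes a genuinely different (and somewhat cleaner) route than the paper's. The paper argues by witnesses: it observes that under the hypotheses one can pick $G_1 \in \cC_1 - \cC_2$, $G_2 \in \cC_2 - \cC_1$ and a third graph $G_3$ in one of the remaining nonempty cells of the Venn diagram (e.g.\ $G_3 \in \cC_1 \cap \cC_2$, ``the other cases being similar''); then $G_2, G_3$ are $\cC_2$-equivalent but not $\cC_1$-equivalent, refuting $\cC_1 \leq_{d.p.} \cC_2$, and symmetrically for the other direction. That argument carries a hidden case analysis over which of the four cells $\cC_1 \cap \cC_2$, $\cC_1 - \cC_2$, $\cC_2 - \cC_1$, $\bar{\cC_1} \cap \bar{\cC_2}$ is the (at most one) empty one. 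You instead prove the structural fact that $\cC_1 \leq_{d.p.} \cC_2$ forces $\cC_1 \in \{\emptyset, \cG, \cC_2, \bar{\cC_2}\}$, by reading the relation on indicator properties as refinement of two-block partitions, and then exclude all four alternatives by the hypotheses. This buys you three things: no witness graphs and no case analysis on nonempty cells; Proposition~\ref{th:char-0}(i) falls out of the same classification for free; and the relativization to a similarity class for part (ii) is verbatim, whereas the paper leaves (ii) to the reader. You also correctly flag the only delicate point in the symmetric step, namely that $\cC_2 \neq \bar{\cC_1}$ is equivalent to $\cC_1 \neq \bar{\cC_2}$, and you correctly verify failure of both one-sided refinements rather than mere inequivalence.
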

\begin{proof}
We prove only (i) and leave the proof of (ii) to the reader.
Assume 
$G_1 \in (\cC_1 - \cC_2)$, 
$G_2 \in (\cC_2 - \cC_1)$ and
$G_3 \in \cC_1 \cap \cC_2 $, the other cases being similar.
Then $G_2, G_3 \in \cC_2 $. 
If $\cC_1 \leq_{d.p.} \cC_2$, we would have that 
both $G_2, G_3 \in \cC_1 $, or
both $G_2, G_3 \not \in \cC_1 $, a contradiction.
\end{proof}

In the next two subsections we look at graph polynomials, which are either generating functions, or count colorings
which, in both cases, solely depend on a graph property $\cC$.

\subsection{Graph polynomials as generating functions}

Let $\cC$ be a graph property, and $\cD$ be a graph property closed under adding and removal isolated vertices.
Recall from Section \ref{se:gp} the definitions
$$
\bP_{\cC}^{ind}(G;X)= \sum_{A \subseteq V: G[A] \in \cC} X^{|A|}
\mbox{\ \ \ \    and   \ \ \ \ }
\bP_{\cD}^{span}(G;X)= \sum_{B \subseteq E: G\langle B \rangle \in \cD} X^{|B|}.
$$
Let
$|V(G)|= n(G)$ and
$|E(G)|= m(G)$. 

\begin{proposition}
\begin{enumerate}[(i)]
\item
$\cC \leq_{d.p.} \bP_{\cC}^{ind}(G;X)$ and
\item
$\cD \leq_{d.p.} \bP_{\cD}^{span}(G;X)$. 
\end{enumerate}
\end{proposition}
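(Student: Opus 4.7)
The plan is to extract the indicator $[G\in\cC]$ directly as a coefficient of $\bP_{\cC}^{ind}(G;X)$, and analogously $[G\in\cD]$ as a coefficient of $\bP_{\cD}^{span}(G;X)$. For part (i), the key observation is that $A=V(G)$ is the unique subset of $V(G)$ of size $n(G)$, and $G[V(G)]=G$; hence the coefficient of $X^{n(G)}$ in $\bP_{\cC}^{ind}(G;X)=\sum_{A:G[A]\in\cC}X^{|A|}$ equals $[G\in\cC]$. In particular, $\bP_{\cC}^{ind}(G;X)$ has degree exactly $n(G)$ with leading coefficient $1$ whenever $G\in\cC$, whereas its degree is strictly less than $n(G)$ when $G\notin\cC$. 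The symmetric fact for part (ii) is that $B=E(G)$ is the unique edge subset of size $m(G)$ and $G\langle E(G)\rangle=G$, so the coefficient of $X^{m(G)}$ in $\bP_{\cD}^{span}(G;X)$ equals $[G\in\cD]$. (The closure of $\cD$ under adding isolated vertices, while used elsewhere in the paper, is not needed for this reduction.)

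Given these observations, viewing the graph property $\cC$ as the $\{0,1\}$-valued graph invariant $\mathbf{1}_{\cC}$, the proof reduces to a case analysis on the hypothesis $\bP_{\cC}^{ind}(G_1;X)=\bP_{\cC}^{ind}(G_2;X)$. If both $G_1,G_2\in\cC$, or both are outside $\cC$, their $\cC$-indicators trivially agree. In the remaining mixed case, say $G_1\in\cC$ and $G_2\notin\cC$, the common polynomial has degree $n(G_1)$ with leading coefficient $1$, and one must show this is incompatible with the structure of $\bP_{\cC}^{ind}(G_2;X)$, whose contribution from $A=V(G_2)$ is zero. Part (ii) proceeds identically.

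The main obstacle is precisely this mixed case: $n(G)$ is not directly recoverable from the polynomial when $G\notin\cC$, so one must carefully combine the degree and leading-coefficient information to rule out the scenario in which a graph $G_2\notin\cC$ produces the same polynomial as some $G_1\in\cC$ (merely by having a unique maximum-size induced subgraph in $\cC$ sitting strictly inside $V(G_2)$). This is where the subtlety lies; I expect the argument to either appeal implicitly to the similarity class (which by Lemma~\ref{le:new} and Proposition~\ref{th:dp}(ii) pins down $n(G_1)=n(G_2)$ and makes the conclusion immediate from the coefficient of $X^{n(G)}$) or to combine top-coefficient data with further structural properties of $\bP_{\cC}^{ind}$.
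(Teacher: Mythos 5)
Your reduction to the coefficient of $X^{n(G)}$ (resp.\ $X^{m(G)}$) is exactly the paper's argument: the published proof consists of the single observation that $G\in\cC$ iff the coefficient of $X^{n(G)}$ in $\bP_{\cC}^{ind}(G;X)$ does not vanish. But the ``mixed case'' you flag and leave open is a genuine gap, and it cannot be closed, because the d.p.\ claim as stated is false. Take $\cC=\{K_2\}$, $G_1=K_2$ and $G_2=K_2\sqcup K_1$. Then $\bP_{\cC}^{ind}(G_1;X)=\bP_{\cC}^{ind}(G_2;X)=X^2$, yet $G_1\in\cC$ while $G_2\notin\cC$; hence $\cC\not\leq_{d.p.}\bP_{\cC}^{ind}$. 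This realizes precisely the scenario you were worried about (a graph outside $\cC$ whose unique maximum-size induced subgraph in $\cC$ sits strictly inside its vertex set), so no combination of degree and leading-coefficient information will rule it out. An analogous example defeats part (ii): with $\cD$ the closure of $\{C_3\}$ under adding isolated vertices, $G_1=C_3$ and $G_2=C_3\sqcup K_2$ both give $\bP_{\cD}^{span}(G;X)=X^3$, but only $G_1$ is in $\cD$.

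What the coefficient fact does prove is the s.d.p.\ version, which is exactly the Corollary that follows this Proposition in the paper: by Proposition \ref{th:dp}(ii) the reduction witnessing $\leq_{s.d.p.}$ may take $n(G)$ (resp.\ $m(G)$) as an additional argument, and then reading off the coefficient of $X^{n(G)}$ is legitimate. So of the two repairs you suggest at the end, the first (restrict to a similarity class) is the right one, but it only salvages $\cC\leq_{s.d.p.}\bP_{\cC}^{ind}(G;X)$ and $\cD\leq_{s.d.p.}\bP_{\cD}^{span}(G;X)$, not the d.p.\ statement. To make your write-up complete you should either exhibit a counterexample as above or explicitly downgrade the conclusion to s.d.p.; as it stands, you have correctly located the weak point of the paper's own proof without resolving it.
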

\begin{proof}
(i) follows from the fact that
$G \in \cC$ iff the coefficient of $X^{n(G)}$ in $\bP_{\cC}^{ind}(G;X)$ does not vanish.
\\
Similarly, (ii) follows from the fact that
$G \in \cC$ iff the coefficient of $X^{m(G)}$ in $\bP_{\cC}^{span}(G;X)$ does not vanish.
\end{proof}

From Lemma \ref{le:new} we get immediately:
\begin{corollary}
\begin{enumerate}[(i)]
\item
$\cC \leq_{s.d.p.} \bP_{\cC}^{ind}(G;X)$ and
\item
$\cD \leq_{s.d.p.} \bP_{\cD}^{span}(G;X)$. 
\end{enumerate}
\end{corollary}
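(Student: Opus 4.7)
The plan is almost trivial once we unpack what the Corollary claims: we are asserting the s.d.p.\ version of the inequalities already established in the preceding Proposition for d.p., so the entire content is a one-step application of Lemma \ref{le:new}. Concretely, the Proposition gives $\cC \leq_{d.p.} \bP_{\cC}^{ind}$ and $\cD \leq_{d.p.} \bP_{\cD}^{span}$, treating the graph property $\cC$ (respectively $\cD$) as the graph invariant which equals its characteristic function. Lemma \ref{le:new} then asserts that $\leq_{d.p.}$ implies $\leq_{s.d.p.}$ for any two graph polynomials (and equally for any two graph invariants, since only equality of values is compared), so the conclusions follow.

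If I wanted to spell out the one-line verification: suppose $G_1$ and $G_2$ are similar graphs with $\bP_{\cC}^{ind}(G_1;X) = \bP_{\cC}^{ind}(G_2;X)$. Then in particular $\bP_{\cC}^{ind}(G_1;X) = \bP_{\cC}^{ind}(G_2;X)$ holds without any restriction, so by Proposition's part (i) we obtain $G_1 \in \cC \iff G_2 \in \cC$, which is exactly $\cC \leq_{s.d.p.} \bP_{\cC}^{ind}$. Part (ii) is identical with $\bP_{\cD}^{span}$ in place of $\bP_{\cC}^{ind}$.

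The only conceptual point worth pausing on is that the comparison $\cC \leq_{d.p.} \bP$ juxtaposes a graph property (a Boolean invariant) with a genuine polynomial-valued invariant; this is admissible because both the d.p.\ and s.d.p.\ preorders are defined purely in terms of induced equivalence relations on graphs, and a graph property induces an equivalence relation with two classes. Once this interpretation is fixed, there is no obstacle — Lemma \ref{le:new} is a tautological weakening from "for all pairs" to "for all similar pairs," and the Corollary is its direct restatement with the two concrete choices from the Proposition plugged in.
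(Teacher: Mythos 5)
Your proof is correct and follows exactly the paper's route: the paper derives this corollary immediately from Lemma \ref{le:new} applied to the preceding proposition, which is precisely your argument. The explicit one-line verification and the remark about comparing a Boolean-valued property with a polynomial-valued invariant are fine but add nothing beyond what the paper already takes for granted.
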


\begin{proposition}
\label{prop:complement}
With
$|V(G)|= n(G)$ and
$|E(G)|= m(G)$ we have:
\begin{enumerate}[(i)]
\item
$\bP_{\cC}^{ind}(G;X) + \bP_{\bar{\cC}}^{ind}(G;X) = 
(1+X)^{n(G)}$
\item
$\bP_{\cD}^{span}(G;X) + \bP_{\bar{\cD}}^{span}(G;X) = 
(1+X)^{m(G)}$
\end{enumerate}
\end{proposition}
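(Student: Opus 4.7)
The plan is to observe that $\cC$ and $\bar{\cC} = \cG - \cC$ form a partition of the class of all finite graphs, so for every vertex subset $A \subseteq V(G)$ the induced subgraph $G[A]$ lies in exactly one of $\cC$ or $\bar{\cC}$. Summing the two generating functions therefore ranges over every $A \subseteq V(G)$ exactly once, each weighted by $X^{|A|}$. The binomial identity $\sum_{A \subseteq V(G)} X^{|A|} = \sum_{k=0}^{n(G)} \binom{n(G)}{k} X^k = (1+X)^{n(G)}$ then yields (i).

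For (ii), the same dichotomy works, but with a small subtlety: the definition of $\bP_{\cD}^{span}$ requires $\cD$ to be closed under adding isolated vertices, which is needed for the generating function to make sense as a polynomial counting edge subsets (so that it does not matter that $G\langle B \rangle$ carries all vertices of $G$, including isolated ones). I would first note that this closure property is preserved under complementation within $\cG$: if $\cD$ is closed under adding and removing isolated vertices, then so is $\bar{\cD}$, since a graph differs from the result of adding an isolated vertex precisely by the isolated vertex, so membership in $\cD$ versus $\bar{\cD}$ is invariant under this operation. Hence $\bP_{\bar{\cD}}^{span}(G;X)$ is well defined by the same formula. After that, the proof is again just a disjoint partition of $\{B : B \subseteq E(G)\}$ according to whether $G\langle B \rangle \in \cD$ or not, followed by the binomial identity $\sum_{B \subseteq E(G)} X^{|B|} = (1+X)^{m(G)}$.

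There is no real obstacle; the only thing to be careful about is verifying that $\bar{\cD}$ is still a legitimate input for the $\bP^{span}$ construction, which is immediate from the invariance noted above. I would therefore present the proof in two short paragraphs, one per item, each consisting of the partition observation followed by the binomial identity.
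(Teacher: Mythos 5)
Your proof is correct and takes essentially the same approach as the paper: both arguments partition the subsets of $V(G)$ (resp.\ $E(G)$) according to whether the induced (resp.\ spanning) subgraph lies in $\cC$ or $\bar{\cC}$ and then invoke the binomial identity, the paper merely grouping the subsets by cardinality first via the counts $c_i(G)+\bar{c}_i(G)=\binom{n(G)}{i}$. Your observation that $\bar{\cD}$ inherits closure under adding and removing isolated vertices is the same subtlety the paper addresses in part (ii) with its remark that $G\langle A\rangle\in\cD$ iff $(V(A),A)\in\cD$.
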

\begin{proof}
(i):
Put
$$
c_i(G) = | \{ A \subseteq V(G): |A|=i, G[A] \in \mathcal{C} \}|
$$
and
$$
\bar{c}_i(G) = | \{ A \subseteq V(G): |A|=i, G[A] \not\in \mathcal{C} \}|.
$$
Clearly, 
$$
c_i(G) + \bar{c}_i(G) = { n(G) \choose i},
$$
hence
$$
\sum_{i=0}^{n(G)} \left(c_i(G) + \bar{c}_i(G)\right) X^i = (1+X)^{n(G)}.
$$

(ii) is similar, but we need that for a set of edges $A \subseteq E(G)$ the spanning subgraph
$G\langle A\rangle =(V(G),A) \in \cD$ iff  $V(A),A) \in \cD$,
where $V(A) = \{ v \in V(G) : \mbox{ there is   } u \in V(G) \mbox{ with  } (u,v) \in A\}$.
\end{proof}

\ifrevised
\begin{proposition}
\label{th:char-1}
Let $\cC_1$ and $\cC_2$, $\cD_1$ and $\cD_2$ be graph properties 
such that
$\cC_1$ and $\cC_2$  and $\cD_1$ and $\cD_2$ are pairwise d.p.-equivalent,
\begin{enumerate}[(i)]
\item
\label{th:char-1.1}
$\bP_{\cC_1}^{ind}(G;X)$ and $\bP_{\cC_2}^{ind}(G;X)$
are s.d.p.-equivalent;
\item
\label{th:char-1.2}
If, additionally, $\cD_1$ and $\cD_2$ are closed under the addition and removal of isolated vertices, then
$\bP_{\cD_1}^{span}(G;X)$ and $\bP_{\cD_2}^{span}(G;X)$
are s.d.p.-equivalent;
\end{enumerate}
\end{proposition}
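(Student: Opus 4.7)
The plan is to reduce the hypothesis via Proposition \ref{th:char-0}(i), which asserts that two d.p.-equivalent graph properties $\cC_1, \cC_2$ must satisfy either $\cC_1 = \cC_2$ or $\cC_1 = \bar{\cC_2}$. This gives a clean dichotomy that I will handle case by case, and the same dichotomy will apply to $\cD_1,\cD_2$.

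For (\ref{th:char-1.1}): in the case $\cC_1 = \cC_2$ the polynomials $\bP_{\cC_1}^{ind}$ and $\bP_{\cC_2}^{ind}$ are literally equal, so s.d.p.-equivalence is immediate. In the case $\cC_1 = \bar{\cC_2}$, Proposition \ref{prop:complement}(i) yields the identity
\[
\bP_{\cC_1}^{ind}(G;X) \;=\; (1+X)^{n(G)} - \bP_{\cC_2}^{ind}(G;X),
\]
so there is a function $F \colon \Z[X] \times \Z^3 \to \Z[X]$, namely $F(p,n,m,k) = (1+X)^{n} - p$, such that $\bP_{\cC_1}^{ind}(G;X) = F(\bP_{\cC_2}^{ind}(G;X), n(G), m(G), k(G))$. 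By Proposition \ref{th:dp}(ii) this gives $\bP_{\cC_1}^{ind} \leq_{s.d.p.} \bP_{\cC_2}^{ind}$, and by symmetry (swap the roles of $\cC_1$ and $\cC_2$) the reverse inequality holds too.

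For (\ref{th:char-1.2}): the argument is identical with Proposition \ref{prop:complement}(ii) replacing (i), except that $(1+X)^{n(G)}$ becomes $(1+X)^{m(G)}$, and one needs the hypothesis that $\cD_1,\cD_2$ are closed under addition and removal of isolated vertices in order to apply Proposition \ref{prop:complement}(ii) (this closure is what makes the complement polynomial $\bP_{\bar{\cD_i}}^{span}$ well-defined in the same sense — spanning subgraphs are indexed by edge sets, and the closure condition ensures membership in $\cD_i$ depends only on the edge set, not on the presence of isolated vertices).

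There is really no main obstacle: once Proposition \ref{th:char-0}(i) supplies the dichotomy, Proposition \ref{prop:complement} supplies the explicit polynomial identity, and Proposition \ref{th:dp}(ii) packages this identity as s.d.p.-reducibility. The only point requiring any care is ensuring that the relevant $n(G)$ (resp.\ $m(G)$) is available as one of the parameters $n(G), m(G), k(G)$ permitted by Proposition \ref{th:dp}(ii), which it is by definition.
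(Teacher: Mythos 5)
Your proposal is correct and follows essentially the same route as the paper's own proof: the dichotomy from Proposition \ref{th:char-0}(i), then the complementation identity of Proposition \ref{prop:complement} packaged via Proposition \ref{th:dp}(ii) as an s.d.p.-reduction (the dependence on $n(G)$, resp.\ $m(G)$, being exactly why one only gets s.d.p.- rather than d.p.-equivalence). You merely spell out the function $F$ explicitly, which the paper leaves implicit.
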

\begin{proof}
We prove  only (i), (ii) is proved analogously.
\\
(i): We use Proposition 
\ref{pr:dp-properties}.
If $\cC_1 =  \cC_2$, clearly, 
$\bP_{\cC_1}^{ind}(G;X) = \bP_{\cC_2}^{ind}(G;X)$, 
hence they are d.p.-equivalent.
If $\cC_1 =  \bar{\cC_2}$, we use Proposition \ref{prop:complement} together with 
Proposition \ref{th:dp}. But Proposition \ref{prop:complement} depends on the $n(G)$, 
hence we get only that 
$\bP_{\cC_1}^{ind}(G;X) and \bP_{\cC_2}^{ind}(G;X)$ 
are s.d.p.-equivalent.
\end{proof}

Let 
$G_n$ be an indexed sequence of graphs  such that
the sequence of polynomials $X^{|V(G_n)|}$ is C-finite.
This assumption is true for all the examples from Section \ref{se:abstract-theorem},
and in particular for Theorem \ref{th:chebyshev}, provided the function $|V(G_n)|$ is linear in $n$.
We shall now show that C-finiteness of the sequences of polynomials $\bP_{\cC}^{ind}(G_n;X)$
of Theorem \ref{th:chebyshev}
is a semantic property graph polynomials as generating functions.
However, the particular form of the recurrence relation is not.

\begin{theorem}
\label{th:char-C-finite}
Let $\cC_1$ and $\cC_2$, $\cD_1$ and $\cD_2$ be graph properties 
such that
$\cC_1$ and $\cC_2$  and $\cD_1$ and $\cD_2$ are pairwise d.p.-equivalent,
and let $G_n$ be an indexed sequence of graphs. 
Furthermore, assume that the sequence of polynomials $X^{|V(G_n)|}$ is C-finite.
Then
\begin{enumerate}[(i)]
\item
$\bP_{\cC_1}^{ind}(G_n;\bX)$ is C-finite iff $\bP_{\cC_2}^{ind}(G_n;\bX)$ is C-finite. 
\item
$\bP_{\cD_1}^{span}(G_n;\bX)$ is C-finite iff $\bP_{\cD_2}^{span}(G_n;\bX)$ is C-finite; 
\end{enumerate}
\end{theorem}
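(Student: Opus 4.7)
I would reduce the d.p.-equivalence hypothesis to two elementary cases via Proposition \ref{pr:dp-properties}(i), and then exploit Proposition \ref{prop:complement} to relate the two polynomial sequences by a sequence I know to be C-finite. Concretely: if $\cC_1 = \cC_2$ the two sequences coincide and there is nothing to show. The only non-trivial case is $\cC_1 = \bar{\cC_2}$, in which Proposition \ref{prop:complement}(i) gives
\[
\bP_{\cC_1}^{ind}(G_n;X) + \bP_{\cC_2}^{ind}(G_n;X) \;=\; (1+X)^{n(G_n)}.
\]
So once I know that $(1+X)^{n(G_n)}$ is a C-finite sequence and that C-finite sequences in $\Z[X]$ are closed under sums and differences, the equivalence in (i) follows at once: each of $\bP_{\cC_1}^{ind}(G_n;X)$ and $\bP_{\cC_2}^{ind}(G_n;X)$ is obtained from the other by adding or subtracting a C-finite sequence.

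Next, I would verify the two closure properties. For the additive closure, the standard shift-operator proof goes through verbatim over the coefficient ring $\Z[X]$: if $a_m$ and $b_m$ are annihilated respectively by operators $P(S), Q(S) \in \Z[X][S]$ in the shift $S$, then $(PQ)(S)(a_m+b_m) = 0$, giving a C-finite recurrence of depth $\deg P + \deg Q$ whose coefficients remain in $\Z[X]$. For invariance under substitution, the recurrence $p_{m+q}(X) = \sum_{i<q} f_i(X) p_{m+i}(X)$ transforms under any $X \mapsto h(X)$ with $h \in \Z[X]$ into $p_{m+q}(h(X)) = \sum_{i<q} f_i(h(X))\, p_{m+i}(h(X))$, which is again a C-finite recurrence over $\Z[X]$. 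Applying this with $h(X) = 1+X$ converts the hypothesis that $X^{n(G_n)}$ is C-finite into the desired C-finiteness of $(1+X)^{n(G_n)}$.

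For part (ii) the argument is identical, substituting Proposition \ref{prop:complement}(ii) for (i) and $m(G_n)$ for $n(G_n)$; this requires the analogous assumption that $X^{m(G_n)}$ is C-finite, which I would either add as an explicit hypothesis or note holds automatically in all the examples of Section \ref{se:abstract-theorem}, where $|E(G_n)|$ is linear in $n$ and so $X^{m(G_n)}$ satisfies the one-step recurrence $s_{n+1} = X^a s_n$.

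The main obstacle, really the only non-routine step, is recognizing that the classical closure lemma for C-finite sequences over $\Z$ extends verbatim to $\Z[X]$-valued sequences under the shift operator; once this polynomial-coefficient version of the ring-of-C-finite-sequences fact is in hand, the rest of the proof is a short combination of Propositions \ref{pr:dp-properties} and \ref{prop:complement}.
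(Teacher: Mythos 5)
Your proof is correct and follows essentially the same route as the paper, which likewise reduces via Proposition \ref{pr:dp-properties} to the case $\cC_1=\bar{\cC_2}$ and then combines Proposition \ref{prop:complement} with closure of C-finite sequences under sums and differences; you merely supply the shift-operator and substitution details the paper leaves implicit. Your observation that part (ii) really needs $X^{m(G_n)}$ (rather than $X^{|V(G_n)|}$) to be C-finite is a fair catch that the paper's statement glosses over.
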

\begin{proof}
This follows in both cases from the fact that the sum and difference of two C-finite sequences
is again C-finite together with Proposition \ref{prop:complement}.
\end{proof}

\else
\begin{theorem}
\label{th:char-1}
Let $\cC_1$ and $\cC_2$, $\cD_1$ and $\cD_2$ be graph properties 
such that
$\cC_1$ and $\cC_2$  and $\cD_1$ and $\cD_2$ are pairwise d.p.-equivalent,
 and let $G_n$ be an indexed sequence of graphs. Then
\begin{enumerate}[(i)]
\item
\label{th:char-1.1}
$\bP_{\cC_1}^{ind}(G;X)$ and $\bP_{\cC_2}^{ind}(G;X)$
are d.p.-equivalent;
\item
$\bP_{\cC_1}^{ind}(G_n;\bX)$ is C-finite iff $\bP_{\cC_2}^{ind}(G_n;\bX)$ is C-finite. 
\item
\label{th:char-1.2}
If, additionally, $\cD_1$ and $\cD_2$ are closed under the addition and removal of isolated vertices, then
$\bP_{\cD_1}^{span}(G;X)$ and $\bP_{\cD_2}^{span}(G;X)$
are d.p.-equivalent;
\item
$\bP_{\cD_1}^{span}(G_n;\bX)$ is C-finite iff $\bP_{\cD_2}^{span}(G_n;\bX)$ is C-finite; 
\end{enumerate}
\end{theorem}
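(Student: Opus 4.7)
The plan is to leverage the structural description of d.p.-equivalent graph properties provided by Proposition \ref{pr:dp-properties}(i)---either $\cC_1 = \cC_2$ (and analogously $\cD_1 = \cD_2$) or the properties are complementary---and combine it with Proposition \ref{prop:complement}. In the equality case the polynomials coincide graph by graph, so all four conclusions are immediate. The remainder of the argument is devoted to the complementary case.

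For claim (\ref{th:char-1.1}), assuming $\cC_1 = \bar{\cC_2}$, Proposition \ref{prop:complement}(i) gives
\[
\bP_{\cC_1}^{ind}(G;X) \;=\; (1+X)^{n(G)} \;-\; \bP_{\cC_2}^{ind}(G;X).
\]
To exhibit the function $F$ required by Proposition \ref{th:dp}(i), I would extract $n(G)$ from $\bP_{\cC_2}^{ind}(G;X)$ by reading off its degree, which equals $n(G)$ whenever $\cC_2$ contains enough induced subgraphs of $G$; then $F$ sends $p(X) \mapsto (1+X)^{\deg p} - p(X)$. Claim (\ref{th:char-1.2}) for spanning subgraphs follows by the same scheme from Proposition \ref{prop:complement}(ii), with $n(G)$ replaced by $m(G)$; the closure of $\cD_i$ under addition and removal of isolated vertices is what makes both sides of the complementation identity well defined. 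The main obstacle is precisely this recovery step: for degenerate properties the degree of $\bP_{\cC_2}^{ind}(G;X)$ may drop strictly below $n(G)$, in which case $F$ as constructed fails, and one can only conclude s.d.p.-equivalence via Proposition \ref{th:dp}(ii), where $n(G)$ (or $m(G)$) is passed in as extra data.

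For the C-finiteness claims (ii) and (iv), I would use the fact that the set of C-finite sequences in $\Z[\bX]$ is closed under termwise sums and differences. Under the implicit auxiliary assumption that $(1+X)^{n(G_n)}$, respectively $(1+X)^{m(G_n)}$, is itself C-finite---which holds whenever $n(G_n)$, respectively $m(G_n)$, is linear in $n$, covering all the indexed families singled out in Section \ref{se:abstract-theorem}---the identity
\[
\bP_{\cC_1}^{ind}(G_n;X) + \bP_{\cC_2}^{ind}(G_n;X) \;=\; (1+X)^{n(G_n)}
\]
transports C-finiteness of one summand to the other, and symmetrically for the spanning case. Combining the complementary case with the trivial equality case yields the two C-finiteness equivalences. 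In my write-up I would also make the auxiliary C-finiteness assumption explicit in the statement, since the argument genuinely depends on it.
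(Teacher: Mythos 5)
Your proposal follows essentially the same route as the paper: reduce via Proposition \ref{pr:dp-properties}(i) to the cases $\cC_1=\cC_2$ and $\cC_1=\bar{\cC_2}$, handle the latter with the complementation identity of Proposition \ref{prop:complement} and Proposition \ref{th:dp}, and transport C-finiteness through that identity using closure of C-finite sequences under sums and differences. Moreover, the two caveats you raise --- that in general one can only conclude s.d.p.-equivalence because recovering $n(G)$ (resp.\ $m(G)$) from the polynomial alone may fail, and that the C-finiteness transfer needs $(1+X)^{n(G_n)}$ (equivalently $X^{|V(G_n)|}$) to be C-finite as an explicit hypothesis --- are exactly the corrections the authors themselves make in the revised form of this result, where the d.p.-claims are weakened to s.d.p.-equivalence and the C-finiteness statement is split off with that auxiliary assumption added.
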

\begin{proof}
We prove (i) and (ii), (iii) and (iv) are proved analogously.
\\
(i): We use Proposition 
\ref{pr:dp-properties}.
If $\cC_1 =  \cC_2$, clearly, 
$\bP_{\cC_1}^{ind}(G;X) = \bP_{\cC_2}^{ind}(G;X)$, hence they are d.p.-equivalent.
If $\cC_1 =  \bar{\cC_2}$, we use Proposition \ref{prop:complement} together with 
Proposition \ref{th:dp}. 
\\
(ii) follows from Proposition \ref{prop:complement}.
\end{proof}
\fi 

\subsection{Generalized chromatic polynomials}
Recall from the introduction the definition of
$\chi_{\cC}(G;k)$ as the number of colorings of $G$
with at most $k$ colors such that each color class induces a graph in $\cC$.
\begin{theorem}[J. Makowsky and B. Zilber, cf. \cite{ar:KotekMakowskyZilber11}]
$\chi_{\cC}(G;k)$ is a polynomial in $k$
for any graph property $\cC$.
\end{theorem}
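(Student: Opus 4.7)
The plan is to decompose the count of $\cC$-admissible colorings by the unordered partition of $V(G)$ they induce, since this partition is the only part of the structure that depends on $G$ and $\cC$; everything else is counting how color labels can be assigned, which is manifestly polynomial in $k$.

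Concretely, let $n = |V(G)|$, and for each $1 \le j \le n$ let $p_j^{\cC}(G)$ denote the number of unordered partitions of $V(G)$ into exactly $j$ non-empty parts such that every part induces a subgraph of $G$ lying in $\cC$. This quantity is a non-negative integer depending only on $G$ and $\cC$, and in particular it does not depend on $k$. I would then observe that any coloring $c : V(G) \to [k]$ whose non-empty color classes each induce a subgraph in $\cC$ is determined by two independent pieces of data: its induced partition into $j$ non-empty color classes (where $j$ is the number of colors actually used), which is counted by $p_j^{\cC}(G)$, together with an injective assignment of the $j$ parts to $j$ of the $k$ available color labels, of which there are $k(k-1)\cdots(k-j+1) = k_{(j)}$.

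Combining these, one obtains the identity
$$
\chi_{\cC}(G;k) \;=\; \sum_{j=1}^{n} p_j^{\cC}(G) \cdot k_{(j)},
$$
where $k_{(j)}$ is the falling factorial of degree $j$ in $k$. Since the sum is finite and each falling factorial is a polynomial in $k$, the right-hand side is a polynomial in $k$ of degree at most $n$, establishing the claim. There is no substantive obstacle here; the only conceptual point is the decoupling of the color-labelling (which contributes the polynomial factor $k_{(j)}$) from the combinatorial data $p_j^{\cC}(G)$. One small bookkeeping issue to flag is the convention for the empty color class: under the interpretation ``color class'' $=$ ``non-empty preimage,'' the formula above is exact and no assumption on $\cC$ containing the empty graph is needed.
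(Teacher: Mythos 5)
Your argument is correct: decomposing a $\cC$-admissible coloring into the induced partition of $V(G)$ into $j$ non-empty parts (counted by $p_j^{\cC}(G)$, independent of $k$) and an injective labelling of those parts by colors (contributing the falling factorial $k_{(j)}$) yields $\chi_{\cC}(G;k)=\sum_{j} p_j^{\cC}(G)\,k_{(j)}$, which is visibly a polynomial in $k$. The paper states this theorem without proof, citing Kotek--Makowsky--Zilber, and your falling-factorial decomposition is essentially the standard argument given there, so there is nothing to correct; your closing remark that ``color class'' must mean non-empty preimage is exactly the right convention to make the identity exact without any assumption on whether the empty graph lies in $\cC$.
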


\ifrevised
\else
\begin{theorem}
\label{th:char-2}
Let $\cC_1$ and $\cC_2$ two d.p.-equivalent graph properties.
Then
$\chi_{\cC_1}(G;k)$ and $\chi_{\cC_2}(G;k)$
are d.p.-equivalent. 
\end{theorem}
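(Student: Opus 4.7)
Plan: First I would apply Proposition~\ref{th:char-0}(i), which says that two d.p.-equivalent graph properties $\cC_1, \cC_2$ must satisfy $\cC_1 = \cC_2$ or $\cC_1 = \bar{\cC_2}$. The first case is trivial, since then $\chi_{\cC_1}(G;k) = \chi_{\cC_2}(G;k)$ as polynomials for every graph $G$, so the two graph polynomials coincide.

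The substantive case is $\cC_1 = \bar{\cC_2}$. By Proposition~\ref{th:dp}(i), d.p.-equivalence reduces to exhibiting functions $F, F' \colon \Z[k] \to \Z[k]$ with $\chi_{\cC_1}(G;k) = F(\chi_{\cC_2}(G;k))$ and $\chi_{\cC_2}(G;k) = F'(\chi_{\cC_1}(G;k))$ for every graph $G$. To attack this I would expand each polynomial in the falling-factorial basis,
$$
\chi_\cC(G;k) = \sum_{j=0}^{n(G)} p_j(G,\cC)\, k^{\underline{j}},
$$
where $p_j(G,\cC)$ counts partitions of $V(G)$ into $j$ non-empty blocks $B$ with $G[B] \in \cC$. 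Since $\{k^{\underline{j}}\}_{j\geq 0}$ is a basis of $\Z[k]$, the polynomial $\chi_\cC(G;k)$ determines and is determined by the sequence $\bigl(p_j(G,\cC)\bigr)_j$, so the task reduces to recovering $\bigl(p_j(G,\cC_2)\bigr)_j$ from $\bigl(p_j(G,\cC_1)\bigr)_j$ and conversely.

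The hard part will be that there is no clean complementary identity for $p_j$ analogous to Proposition~\ref{prop:complement}. A partition of $V(G)$ into $j$ blocks can be \emph{mixed}, with some blocks inducing graphs in $\cC_2$ and others in $\cC_1 = \bar{\cC_2}$; such mixed partitions are counted by neither $p_j(G,\cC_1)$ nor $p_j(G,\cC_2)$. Consequently $p_j(G,\cC_1) + p_j(G,\cC_2) \neq S(n(G), j)$ in general, in sharp contrast to the induced-subgraph situation in which every $A \subseteq V(G)$ is counted by exactly one of $\bP_{\cC}^{ind}$ or $\bP_{\bar{\cC}}^{ind}$, yielding the identity summing to $(1+X)^{n(G)}$. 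The naive inclusion-exclusion $\prod_B [G[B]\in\cC_1] = \prod_B (1 - [G[B]\in\cC_2])$, summed over partitions, expands into terms counting partitions with a designated subset of blocks in $\cC_2$ and the remaining blocks unrestricted, not partitions lying entirely in $\cC_2$, so the inversion does not close.

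I suspect that without further hypotheses the theorem actually fails: taking $\cC_2 = \cG$ gives $\chi_{\cC_2}(G;k) = k^{n(G)}$, which separates graphs by order, whereas $\cC_1 = \bar{\cC_2} = \emptyset$ gives $\chi_{\cC_1}(G;k) = 0$ on every non-empty $G$, which does not. A correct version of the statement therefore likely requires an auxiliary hypothesis on $\cC_1, \cC_2$---for instance that both properties contain $K_1$, or that they be hereditary, or closed under disjoint unions---that makes the partition-count inversion possible. Pinning down such a hypothesis and then carrying out the inversion explicitly would be the main technical content of any correct variant of the theorem.
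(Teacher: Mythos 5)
Your diagnosis is correct, and it is the essential point: the statement is false as written, and your counterexample ($\cC_2 = \cG$, $\cC_1 = \emptyset = \bar{\cC_2}$, giving $\chi_{\cC_2}(G;k) = k^{n(G)}$ versus $\chi_{\cC_1}(G;k) = 0$ on all non-empty graphs, so the former separates graphs by order while the latter does not) is valid. The paper itself concedes this: in its revised form the theorem is withdrawn and replaced by Proposition~\ref{prop:char-2}, which exhibits exactly such a counterexample with $\cC_1$ the disconnected graphs and $\cC_2$ the connected graphs, where $\chi_{\cC_1}(K_i;X) \equiv 0$ while $\chi_{\cC_2}$ distinguishes cliques of different orders. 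The one-line proof attached to the withdrawn theorem (``this follows from $G \in \cC$ iff $\chi_{\cC}(G;1)=1$'') establishes only $\cC \leq_{d.p.} \chi_{\cC}$, i.e.\ that the polynomial refines the property; it says nothing about recovering $\chi_{\bar{\cC}}$ from $\chi_{\cC}$, which is precisely the step your analysis shows cannot be closed. Your structural explanation of the obstruction --- mixed partitions, with some blocks in $\cC_2$ and others in $\bar{\cC_2}$, are counted by neither $p_j(G,\cC_1)$ nor $p_j(G,\cC_2)$, so there is no analogue of Proposition~\ref{prop:complement} --- is exactly what the paper flags as an open problem (``What can we say about $\chi_{\bar{\cC}}(G;k)$ in terms of $\chi_{\cC}(G;k)$?''). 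The only adjustment I would suggest is one of emphasis: rather than hunting for an auxiliary hypothesis under which the claim might be salvaged, promote your counterexample to the main conclusion, which is what the revised paper does; note also that even the weaker s.d.p.-analogue, which does hold for generating functions by Proposition~\ref{th:char-1}, is left open for generalized chromatic polynomials.
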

\begin{proof}
This follows from the fact that
$G \in \cC$ iff $\chi_{\cC}(G;1) =1$.
\end{proof}
\fi 

In contrast to Proposition \ref{prop:complement}
the relationship between $\chi_{\cC}(G;k)$ and $\chi_{\bar{\cC}}(G;k)$
is not  at all obvious.

\begin{problem}
What can we say about $\chi_{\bar{\cC}}(G;k)$ in terms of $\chi_{\cC}(G;k)$?
\end{problem}

\begin{proposition}
\label{prop:char-2}
There are two classes $\cC_1$ and $\cC_2$ which are d.p.-equivalent but such that
$\chi_{\cC_1}$ and $\chi_{\cC_2}$ are not d.p.-equivalent.
\end{proposition}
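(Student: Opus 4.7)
The plan is to invoke Proposition~\ref{th:char-0}, which forces any two d.p.-equivalent graph properties to satisfy $\cC_1=\cC_2$ or $\cC_1=\bar{\cC_2}$. Since $\cC_1=\cC_2$ trivially yields $\chi_{\cC_1}=\chi_{\cC_2}$, it suffices to exhibit a single property $\cC$ for which $\chi_{\cC}$ and $\chi_{\bar{\cC}}$ are not d.p.-equivalent, and then set $\cC_1=\cC$, $\cC_2=\bar{\cC}$.

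I would take $\cC$ to be the class of edgeless graphs, so that $\chi_{\cC}(G;k)$ is the ordinary chromatic polynomial $\chi(G;k)$, while $\chi_{\bar{\cC}}(G;k)$ counts colorings in which every non-empty color class induces a subgraph containing at least one edge. To avoid any ambiguity about empty color classes, I would work with the Stirling-type expansion $\chi_{\cC}(G;k)=\sum_j S^{\cC}(G,j)\, k^{\underline{j}}$, where $S^{\cC}(G,j)$ is the number of set partitions of $V(G)$ into $j$ non-empty blocks each inducing a graph in $\cC$, and $k^{\underline{j}}=k(k-1)\cdots(k-j+1)$.

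For the separating pair of graphs I would use two trees on four vertices, namely $P_4$ and $K_{1,3}$. Since every tree on $n$ vertices has chromatic polynomial $k(k-1)^{n-1}$, these two trees are $\chi_{\cC}$-equivalent. Direct enumeration of the partitions of their vertex sets into non-singleton edge-spanning blocks gives, for $P_4=v_1v_2v_3v_4$, precisely the two admissible partitions $\{V\}$ and $\{\{v_1,v_2\},\{v_3,v_4\}\}$, yielding $\chi_{\bar{\cC}}(P_4;k)=k+k(k-1)=k^2$; whereas for $K_{1,3}$ only the trivial partition $\{V\}$ survives, because any size-$2{+}2$ partition isolates a pair of leaves whose induced subgraph is edgeless, giving $\chi_{\bar{\cC}}(K_{1,3};k)=k$. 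Since $k^2\neq k$ as polynomials in $k$, we would conclude $\chi_{\bar{\cC}}\not\leq_{d.p.}\chi_{\cC}$, which already breaks d.p.-equivalence of $\chi_{\cC_1}$ and $\chi_{\cC_2}$.

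The only delicate point is pinning down the convention on empty color classes so that the expansion above is in force; once that is settled, the obstruction is a tiny partition count. The main step is conceptual rather than technical: seeing that even with only two d.p.-equivalence classes of $\cC$'s on offer (namely $\cC$ and $\bar{\cC}$), the generalized chromatic polynomial construction is asymmetric enough to produce distinct distinctive powers, which is precisely why trees provide the cleanest witness.
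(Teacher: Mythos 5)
Your proof is correct and follows the same strategy as the paper's: take a graph property and its complement (which are d.p.-equivalent) and exhibit an explicit pair of graphs that one of the two generalized chromatic polynomials separates but the other does not. The paper's witness is the pair connected/disconnected, distinguishing cliques of different sizes via $\chi_{\cC_2}(K_i;2)=2^i-2$ while $\chi_{\cC_1}(K_i;X)=0$; yours is edgeless/non-edgeless with the chromatically equivalent trees $P_4$ and $K_{1,3}$, and your partition counts are correct (and robust to the empty-color-class convention).
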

\begin{proof}
Let $\cC_1$ be all the disconnected graphs and
Let $\cC_2$ be all the connected graphs. As they are complements of each other, they are d.p.-equivalent.
\\
We compute for $K_i$:
$$
\chi_{\cC_1}(K_i; j) =0, j \in \N^+
$$
because there is no way to partition $K_i$ into any number of disconnected  parts.
Hence $\chi_{\cC_1}(K_i; X) =0$.
$$
\chi_{\cC_2}(K_i; 2) = 2^i -2
$$
because every partion of $K_i$ into two nonempty parts gives two connected graphs.
\\
Therefore $\chi_{\cC_2}$ distinguishes between cliques of different size,
whereas $\chi_{\cC_1}$ does not.
\end{proof}

We note, however, that the analogue of Proposition \ref{th:char-1} for generalized chromatic polynomials
remains open.

\subsection{d.p.-equivalence of graph polynomials}
The converse of Theorem
\ref{th:char-1}(\ref{th:char-1.1}) and (\ref{th:char-1.2}) is not true:

\begin{proposition}
\label{prop:counterexample}
There are graph properties $\cC_1$ and $\cC_2$ which are not d.p.-equivalent,
but such that
\begin{enumerate}[(i)]
\item
$\bP_{\cC_1}^{ind}(G;X)$ and $\bP_{\cC_2}^{ind}(G;X)$
are d.p.-equivalent.
\item
$\chi_{\cC_1}(G;X)$ and $\chi_{\cC_2}(G;X)$
are d.p.-equivalent.
\end{enumerate}
\end{proposition}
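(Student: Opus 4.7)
My plan is to exhibit a pair of ``size-based'' graph properties $\cC_1$ and $\cC_2$---i.e., properties of the form $\{H \in \cG : n(H) \in S\}$ for some $S \subseteq \N$---chosen so that they are neither equal nor complementary. For such classes, both $\bP_{\cC}^{ind}(G;X)$ and $\chi_{\cC}(G;k)$ automatically depend on $G$ only through $n(G)$, which makes it easy to arrange that the two equivalence relations they induce on $\cG$ coincide, even though $\cC_1$ and $\cC_2$ themselves induce strictly coarser (and distinct) equivalence relations on $\cG$.

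Concretely I would take $\cC_j = \{H \in \cG : n(H) \leq j\}$ for $j = 1, 2$. Immediately $\cC_1 \subsetneq \cC_2$ and, e.g., $K_3$ lies in neither, so $\cC_1 \neq \cC_2$ and $\cC_1 \neq \bar{\cC_2}$; Proposition \ref{pr:dp-properties}(i) then yields that $\cC_1$ and $\cC_2$ are not d.p.-equivalent. For part (i), since a subset $A \subseteq V(G)$ contributes to $\bP_{\cC_j}^{ind}(G;X)$ exactly when $|A| \leq j$, one obtains
$$
\bP_{\cC_j}^{ind}(G;X) = \sum_{i=0}^{j} \binom{n(G)}{i} X^i,
$$
and in particular the coefficient of $X$ equals $n(G)$ in both cases, so each of the two polynomials recovers $n(G)$ and hence determines the other. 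Thus $\bP_{\cC_1}^{ind}$ and $\bP_{\cC_2}^{ind}$ induce the same kernel on $\cG$, and Proposition \ref{th:dp}(i) provides the function $F$ establishing d.p.-equivalence.

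For part (ii), the same size-based reasoning shows that the number of colorings $c : V(G) \to [k]$ in which every color class has size at most $j$ depends only on the multiset of color-class sizes, hence only on $n(G)$. Moreover the ``all distinct colors'' coloring contributes the falling factorial $k(k-1) \cdots (k-n(G)+1)$, a polynomial in $k$ of degree $n(G)$ with leading coefficient $1$, and no valid coloring contributes a term of higher degree; hence $\chi_{\cC_j}(G;k)$ has degree exactly $n(G)$, so different values of $n(G)$ yield polynomials of different degrees, and each $\chi_{\cC_j}(G;k)$ determines $n(G)$. Thus $\chi_{\cC_1}$ and $\chi_{\cC_2}$ share the same kernel and are d.p.-equivalent. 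The main subtlety to flag at the start of the proof is the null-graph convention: since $K_0 \in \cC_j$ for both $j$, empty color classes are permissible, which is what keeps $\chi_{\cC_j}(G;k)$ a genuine polynomial in $k$ rather than a function that degenerates for $k > n(G)$, and also accounts for the constant term $1$ in $\bP_{\cC_j}^{ind}(G;X)$.
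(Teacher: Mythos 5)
Your proof is correct and follows essentially the same route as the paper's: both arguments pick tiny vertex-count-based properties (the paper takes $\cC_1=\{K_1\}$ with $\cC_2=\{K_2,E_2\}$ for part (i) and $\cC_2=\{K_1,K_2,E_2\}$ for part (ii)) so that $\bP_{\cC_j}^{ind}(G;X)$ and $\chi_{\cC_j}(G;X)$ each depend only on $n(G)$ and determine $n(G)$, while the underlying properties are neither equal nor complementary. The only differences are cosmetic: you use a single pair $\cC_j=\{H: n(H)\le j\}$ serving both parts (which matches the literal wording of the statement more closely, since the paper changes $\cC_2$ between (i) and (ii)), and you recover $n(G)$ from $\chi_{\cC_2}$ via the degree of the polynomial rather than via the paper's argument using the smallest $r$ with $\chi_{\cC_2}(G;r)>0$.
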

\begin{proof}
For (i)
Let $\cC_1 = \{K_1\}$ and $\cC_2 =\{K_2, E_2\}$
where $E_n$ is the graph on $n$ vertices and no edges.
\\
We compute: 
\begin{gather}
\bP_{\cC_1}^{ind}(G;X) = n(G)\cdot X \notag \\
\bP_{\cC_2}^{ind}(G;X) =  {n(G) \choose 2} \cdot X^2 \notag
\end{gather}
\ifrevised
For (ii) we
choose $\cC_1=\{K_1\}$ as before, but $\cC_2=\{K_1,K_2,E_2\}$. 
\\
{\bf Claim 1:} 
$\chi_{\cC_2}(G,X)\leq_{d.p.} n(G)$
\\§
Proof of Claim 1: Let $G_1$ and $G_2$ be two graphs with the same number of vertices. 
W.l.o.g. assume they have the same vertex set $V(G_1)=V(G_2)=V$.
Now notice for every $f:V \to [k]$, $f$ is a $\cC_2$-coloring of $G_1$ iff it is a 
$f$ is a $\cC_2$-coloring of $G_2$.
Hence $\chi_{\cC_2}(G_1,X)=\chi_{\cC_2}(G_2,X)$ whenever $G_1$ and $G_2$ have the same number of vertices. 
\\
{\bf Claim 2:}  $n(G) \leq_{d.p.} \chi_{\cC_2}(G,X)$
\\
Proof of Claim 2: First denote for every m,
$n_{even}(m) = \prod_{i=0}^{m-1} \binom{2(m-i)}{2}$
and 
$n_{odd}(m) = \prod_{i=0}^{m-1} \binom{2(m-i)+1}{2}$.
For every graph $G$, there is a natural number $m(G)$ such that $n(G)=2m(G)$ or $n(G)=2m(G)+1$.
If $n(G)=2m(G)$, $\chi_{\cC_2}(G,m(G))=n_{even}(m(G))$.
If $n(G)=2m(G)+1$, $\chi_{\cC_2}(G,m(G))=n_{odd}(m(G))$.  
Note $n_{odd}(r) > n_{even}(r)$ for every natural number $r$.
The minimal natural number $r$ such that $\chi(G,r)>0$ is equal to $m(G)$. 
We get that the minimal $r$ such that $\chi_{\cC_2}(G,r)>0$ determines $n(G)$. 
Hence $\chi_{\cC_1}$ and  $\chi_{\cC_2}$ are d.p.-equivalent. 
\else
For (ii) we compute: 
\begin{gather}
\chi_{\cC_1}(G;X) =  {X \choose n}\cdot n! \notag \\
\chi_{\cC_2}(G;X) = 
\begin{cases}
\prod_{i=0}^m {2(m -i) \choose 2} \cdot {X \choose m} n!, & n(G)=2m  \\
0, & n(G) = 2m+1
\end{cases}
\notag 
\end{gather}
$\bP_{\cC_1}^{ind}(G;X)$ and $\bP_{\cC_2}^{ind}(G;X)$,
and $\chi_{\cC_1}(G;X)$ and $\chi_{\cC_2}(G;X)$,
are d.p.-equivalent, 
because  for both
two graphs $G_1, G_2$ get pairwise the same value of the polynomials iff $n(G_1)=n(G_2)$.
\fi 
\end{proof}
We leave it to the reader to construct the corresponding counterexample for
$\bP_{\cD}^{span}(G;X)$.

We cannot use Proposition \ref{prop:dp-incomparable} to show that there
infinitely many d.p.-incomparable graph polynomials of the form
$\bP_{\cC}^{ind}(G;X)$. However, we can construct explicitly 
infinitely many d.p.-incomparable graph polynomials of this form.

\subsection{Many d.p.-inequivalent graph polynomials}

For the rest of this section,
let 
$C_i$ be the undirected circle on $i$ vertices, and $C_i^*$ the graph which consists of a copy of $C_{i-1}$
together with a new vertex $v$ which is connected to exactly one of the vertices of $C_{i-1}$.
Clearly, $C_i$ and $C_i^*$ are similar.
Furthermore, let
$\cC_i = \{C_i\}$, and let $G_i^k$ consist of the disjoint union of $k$-many copies of $C_i$,
and let $\hat{G}_i^k$ consist of the disjoint union of $k-1$ copies of $C_i^*$ together
with one copy of $C_i$.
Again, $\hat{G}_i^k$ and $G_i^k$ are similar.

We compute:
\begin{lemma}
\label{incomp}
\begin{gather} 
\bP_{\cC_{j}}^{ind}(G_{i}^{k};X) = \bP_{\cC_{j}}^{ind}(\hat{G}_{i}^{k};X) = 0 \mbox{ for } i \neq j, i \neq j+1, \tag{i} \\ 
\bP_{\cC_{i}}^{ind}(G_{i}^{k};X) = k \cdot  X^{i} \tag{ii}  \\
\bP_{\cC_{i}}^{ind}(\hat{G}_{i}^{k};X) =  X^{i} \tag{iii}  
\end{gather}
\end{lemma}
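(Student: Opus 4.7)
The plan is to analyze directly, for each of the three assertions, which vertex subsets $A$ yield $G[A] \cong C_j$, and then read off the polynomial. First I would reduce the computation to a counting problem: since every element of $\cC_j = \{C_j\}$ has exactly $j$ vertices, we have
\[
\bP_{\cC_j}^{ind}(G;X) \;=\; N_j(G)\cdot X^j,
\qquad N_j(G) = |\{A \subseteq V(G) : G[A] \cong C_j\}|.
\]
Moreover, since $C_j$ is connected, any $A$ with $G[A] \cong C_j$ must lie inside a single connected component of $G$, so it suffices to count induced $C_j$-subgraphs component by component in $G_i^k$ and $\hat{G}_i^k$.

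Next I would classify the induced cycle subgraphs of the two possible components $C_i$ and $C_i^*$. In $C_i$ itself, removing any nonempty set of vertices breaks the cycle and leaves a disjoint union of paths, so the only induced cycle subgraph is $C_i$ itself, of length $i$. For $C_i^*$, let $v$ denote its pendant vertex and $w$ its unique neighbour; any $A$ that contains $v$ cannot induce a cycle, because $v$ has degree at most $1$ in $C_i^*[A]$ whereas every vertex of a cycle has degree $2$. Hence $A \subseteq V(C_{i-1})$, and the previous observation applied to $C_{i-1}$ shows that the only induced cycle subgraph of $C_i^*$ is all of $C_{i-1}$, of length $i-1$.

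Finally I would assemble the counts. The graph $G_i^k$ consists of $k$ disjoint copies of $C_i$, each of which contributes exactly one induced cycle (of length $i$), so $N_j(G_i^k) = k$ if $j = i$ and $0$ otherwise; this yields (ii) and the $G_i^k$ half of (i). The graph $\hat{G}_i^k$ consists of one copy of $C_i$ and $k-1$ copies of $C_i^*$; the $C_i$-component contributes one induced $C_i$ and each $C_i^*$-component contributes one induced $C_{i-1}$, so
\[
N_j(\hat{G}_i^k) \;=\; \begin{cases} 1 & j = i, \\ k-1 & j = i-1 \text{ (equivalently } i = j+1\text{)}, \\ 0 & \text{otherwise,} \end{cases}
\]
which yields (iii) and the remaining half of (i). No step presents a real obstacle; the only observation requiring a short argument is the degree condition ruling out induced cycles through the pendant vertex of $C_i^*$.
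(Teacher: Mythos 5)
Your proof is correct and complete; the paper itself offers no proof of this lemma (it is introduced with "We compute:"), and your reduction to counting induced copies of $C_j$ component by component, together with the observations that a cycle has no proper induced cycle subgraphs and that the pendant vertex of $C_i^*$ cannot lie on an induced cycle, is exactly the computation the paper intends. Your classification also correctly accounts for the exceptional case $i=j+1$, where the $C_i^*$-components of $\hat{G}_i^k$ contribute induced copies of $C_{i-1}$.
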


\begin{theorem}
\label{th:dp-incomparable}
For all $i,j$ with $i \neq j$ and $i \neq j+1$ the polynomials 
$\bP_{\cC_{i}}^{ind}$ and
$\bP_{\cC_{j}}^{ind}$ are d.p.-incomparable, hence
there are infinitely many d.p.-inequivalent graph polynomials of the form $\bP_{\cC}^{ind}(G;X)$.
\end{theorem}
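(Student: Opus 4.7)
The plan is to prove d.p.-incomparability by exhibiting, for each direction, a pair of graphs distinguished by one of the two polynomials but not by the other, using the computations already done in Lemma \ref{incomp}. The key witnesses are simply $G_i^1, G_i^2$ for one direction and $G_j^1, G_j^2$ for the other; the graphs $\hat{G}_i^k$ will not be needed here (they would be relevant only for s.d.p.-incomparability, since $\hat{G}_i^k$ and $G_i^k$ are similar).

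For the first direction, $\bP_{\cC_i}^{ind} \not\leq_{d.p.} \bP_{\cC_j}^{ind}$, I would take $A = G_i^1$ and $B = G_i^2$. By Lemma \ref{incomp}(i), the hypothesis $i \neq j$ and $i \neq j+1$ gives $\bP_{\cC_j}^{ind}(A;X) = 0 = \bP_{\cC_j}^{ind}(B;X)$, so $\bP_{\cC_j}^{ind}$ fails to distinguish $A$ and $B$. By Lemma \ref{incomp}(ii), however, $\bP_{\cC_i}^{ind}(A;X) = X^i \neq 2X^i = \bP_{\cC_i}^{ind}(B;X)$. This contradicts the definition of $\bP_{\cC_i}^{ind} \leq_{d.p.} \bP_{\cC_j}^{ind}$.

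For the reverse direction, $\bP_{\cC_j}^{ind} \not\leq_{d.p.} \bP_{\cC_i}^{ind}$, I would take $A' = G_j^1$ and $B' = G_j^2$. Here I cannot simply apply Lemma \ref{incomp}(i) with $i$ and $j$ swapped, because the lemma's hypothesis is asymmetric and we are not assuming $j \neq i+1$; so I would argue directly that in a disjoint union of $k$ copies of $C_j$ the only induced cycles are the components themselves, hence $\bP_{\cC_i}^{ind}(G_j^k;X) = 0$ for all $k$ whenever $i \neq j$. Lemma \ref{incomp}(ii), with $i$ replaced by $j$, then gives $\bP_{\cC_j}^{ind}(A';X) = X^j \neq 2X^j = \bP_{\cC_j}^{ind}(B';X)$, establishing the second non-inequality.

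For the final claim of infinitely many pairwise d.p.-inequivalent such polynomials, I would select any infinite subset $I \subseteq \N$ of integers $\geq 3$ with consecutive gaps at least $2$ (e.g.\ all odd integers $\geq 3$). For any two distinct indices $i,j \in I$ we have $|i - j| \geq 2$, so both $i \neq j$ and $i \neq j+1$ hold, and the first claim yields d.p.-incomparability of $\bP_{\cC_i}^{ind}$ and $\bP_{\cC_j}^{ind}$. The main (very mild) obstacle is the asymmetry in the hypothesis of Lemma \ref{incomp}(i): the obvious symmetric reading fails when $j = i+1$, so the second direction must be argued from first principles about the induced subgraphs of $G_j^k$; everything else is an immediate bookkeeping consequence of the lemma.
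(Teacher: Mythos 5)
Your proof is correct, and while it follows the same overall strategy as the paper --- exhibit, via Lemma \ref{incomp}, a pair of graphs that one polynomial distinguishes and the other does not --- your choice of witnesses differs in a way that matters. The paper uses the pairs $(G_j^2,\hat G_j^2)$ and $(G_i^2,\hat G_i^2)$ and invokes Lemma \ref{incomp}(i) to claim $\bP_{\cC_i}^{ind}(G_j^2;X)=\bP_{\cC_i}^{ind}(\hat G_j^2;X)=0$; but that application of the lemma (graph index $j$, class index $i$) needs $j\neq i$ \emph{and} $j\neq i+1$, whereas the theorem's hypothesis only excludes $i=j$ and $i=j+1$. In the case $j=i+1$ the component $C_j^{*}=C_{i+1}^{*}$ contains an induced $C_i$, so $\bP_{\cC_i}^{ind}(\hat G_{i+1}^2;X)=X^i\neq 0$ and the paper's pair fails to do its job. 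Your witnesses $(G_i^1,G_i^2)$ and $(G_j^1,G_j^2)$ avoid $\hat G$ entirely, and you correctly observe both that this is permissible for d.p.\ (as opposed to s.d.p.) incomparability, since similarity of the witnesses is not required, and that the vanishing $\bP_{\cC_i}^{ind}(G_j^k;X)=0$ for $i\neq j$ must be argued directly rather than read off from the asymmetric Lemma \ref{incomp}(i); your direct argument (an induced subgraph of a disjoint union of $j$-cycles is a cycle only if it is a full component) is sound. One small remark on the last step: the restriction to an index set with gaps at least $2$ is unnecessary, since for any distinct $i<j$ one automatically has $i\neq j$ and $i\neq j+1$, so the theorem already applies to the ordered pair $(i,j)$; but this extra caution is harmless.
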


\begin{proof}
Assume $i,j \geq 3$ with $i \neq j$ and $i \neq j+1$.
We first prove 
$\bP_{\cC_{i}}^{ind} \not <_{d.p.} \bP_{\cC_{j}}^{ind}$ for $i \neq j$ and $i \neq j+1$.
\\
We look at the graphs $G_j^2$ and $\hat{G}_j^2$.
$\bP_{\cC_{j}}^{ind}(G_{j}^{2};X) = 2 \cdot X^i$ by Lemma \ref{incomp}(ii).
$\bP_{\cC_{j}}^{ind}(\hat{G}_{j}^{2};X) = X^{i}$ by Lemma \ref{incomp}(iii).
Hence,
$\bP_{\cC_{j}}^{ind}$ distinguishes between the two graphs $G_j^2$ and $\hat{G}_j^2$.
However,
$\bP_{\cC_{i}}^{ind}(G_{j}^{2};X) = 
\bP_{\cC_{i}}^{ind}(\hat{G}_{j}^{2};X) = 0$, by Lemma \ref{incomp}(i).
Hence,
$\bP_{\cC_{i}}^{ind}$ does not distinguish between the two graphs.

To prove
$\bP_{\cC_{j}}^{ind} \not <_{d.p.} \bP_{\cC_{i}}^{ind}$ for $j \neq i$ and $j \neq i+1$.
we look at the graphs $G_i^2$ and $\hat{G}_i^2$.
In this case 
$\bP_{\cC_{j}}^{ind}$ does not distinguish between the two graphs $G_i^2$ and $\hat{G}_i^2$,
but $\bP_{\cC_{i}}^{ind}$ does.
\end{proof}

\begin{theorem}
\label{th:dp-incomparable-span}
There are infinitely many d.p.-inequivalent graph polynomials of the form $\bP_{\cC}^{span}(G;X)$.
\end{theorem}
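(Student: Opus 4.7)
My plan is to mimic the proof of Theorem \ref{th:dp-incomparable} as closely as possible, replacing the induced-subgraph generating function with a spanning-subgraph one. The obstacle is that, by definition, $\bP_{\cD}^{span}$ requires $\cD$ to be closed under addition and removal of isolated vertices, whereas a single cycle $C_i$ is not. I will therefore take the ``cycle-plus-isolated-vertices'' closure.

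Concretely, let $\cD_i$ denote the class of all graphs isomorphic to $C_i \sqcup E_m$ for some $m \geq 0$, where $E_m$ is the edgeless graph on $m$ vertices. By construction, $\cD_i$ is closed under adding and removing isolated vertices. Because $G\langle B \rangle$ is a spanning subgraph, the condition $G\langle B\rangle \in \cD_i$ on $B \subseteq E(G)$ is equivalent to ``$B$ spans a cycle of length exactly $i$ and no other edges.'' Hence
$$
\bP_{\cD_i}^{span}(G;X) \;=\; c_i(G) \cdot X^i,
$$
where $c_i(G)$ is the number of $i$-cycles in $G$.

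Next I would reuse the graphs from the previous proof: let $G_i^k$ be the disjoint union of $k$ copies of $C_i$, and let $\hat{G}_i^k$ be the disjoint union of $k-1$ copies of $C_i^\ast$ (a cycle $C_{i-1}$ with one pendant vertex attached) together with one copy of $C_i$. A direct count shows that $G_i^k$ and $\hat{G}_i^k$ are similar (same number of vertices, edges, and components), and that
\begin{gather*}
\bP_{\cD_j}^{span}(G_i^k;X) = \bP_{\cD_j}^{span}(\hat{G}_i^k;X) = 0 \quad \text{for } j \neq i,\; j \neq i-1, \\
\bP_{\cD_i}^{span}(G_i^k;X) = k \cdot X^i, \qquad \bP_{\cD_i}^{span}(\hat{G}_i^k;X) = X^i.
\end{gather*}
This is the exact analogue of Lemma \ref{incomp}, with the shifted index $j = i-1$ (instead of $j = i+1$) reflecting that the ``defect'' graphs $C_i^\ast$ contain a shorter cycle rather than a longer one.

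Finally, to prove d.p.-incomparability for indices $i, j \geq 3$ with $i \neq j$ and $i \neq j-1$, I will apply the same argument as in Theorem \ref{th:dp-incomparable}: the pair $(G_j^2, \hat{G}_j^2)$ is separated by $\bP_{\cD_j}^{span}$ (values $2X^j$ and $X^j$) but collapsed by $\bP_{\cD_i}^{span}$ (both $0$), showing $\bP_{\cD_i}^{span} \not\leq_{d.p.} \bP_{\cD_j}^{span}$; swapping roles via $(G_i^2, \hat{G}_i^2)$ and the dual index condition yields the reverse non-relation. Since there are infinitely many indices $i$ producing pairwise d.p.-incomparable polynomials, this establishes the theorem. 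The only delicate point is to verify that the shifted index condition (involving $j = i-1$ rather than $j = i+1$) still leaves infinitely many pairs $(i,j)$ satisfying both required exclusions, which is immediate.
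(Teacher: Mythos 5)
Your proof is correct and essentially identical to the paper's: the paper likewise replaces $\cC_i$ by $\cD_i = \{C_i \sqcup E_m : m \in \N\}$ to respect closure under isolated vertices, reuses the same graphs $G_i^k$ and $\hat{G}_i^k$, and computes the same values $k\cdot X^i$, $X^i$ and $0$ to separate the pairs. (One cosmetic slip: the excluded index in Lemma \ref{incomp} is likewise $j=i-1$, i.e.\ $i=j+1$, so there is no shift relative to the induced case; this does not affect your argument.)
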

\begin{proof}
The proof mimics the proof of Theorem \ref{th:dp-incomparable} with following changes:
Instead of $\cC_i$ we use $\cD_i = \{ C_i \sqcup E_j: j \in \N \}$ 
and
\begin{gather}
\bP_{\cC_{j}}^{span}(G_{i}^{k};X) = 0 \mbox{ for } i \neq j, i \neq j+1 \notag \\
\bP_{\cC_{i}}^{span}(G_{i}^{k};X) = k \cdot X^{i}. \notag \\
\bP_{\cC_{j}}^{span}(\hat{G}_{i}^{k};X) = 
\begin{cases}
0 &  i \neq j, i \neq j+1 \\
(k-1) \cdot X^j & i=j+1
\end{cases}
\notag \\
\bP_{\cC_{i}}^{span}(\hat{G}_{i}^{k};X) =  X^{i}. \notag 
\end{gather}
\end{proof}

Next we look at chromatic polynomials $\chi_i(G;X) = \chi_{\cC_i}(G;X)$.
We use the following obvious lemma:
\begin{lemma}
\label{le:chrom}
\begin{enumerate}[(i)]
\item
For $X = \lambda \in \N$:
$$
\chi_i(G_i^k;\lambda) =
\begin{cases}
\lambda_{(k)} &  \lambda \geq k \\
0 & \mbox{ else }
\end{cases}
$$
\item
$$
\chi_j(G_i^k, \lambda) = 0 
$$
provided that $i \neq j$.
\item
$$
\chi_j(\hat{G}_i^k, \lambda) = 0 
$$
provided that $k \geq 2$ or  $k=1, i \neq j$.
\end{enumerate}
\end{lemma}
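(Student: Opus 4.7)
The plan is to classify, in each of the three cases, exactly which subsets of vertices can serve as color classes, and then count. The whole argument rests on one structural observation: if $S$ is a color class required to satisfy $G[S] \cong C_j$, then $G[S]$ must be connected, so $S$ lies entirely within a single connected component of the host graph. Hence it suffices to understand which induced subgraphs of the components of $G_i^k$ (respectively $\hat{G}_i^k$) are isomorphic to $C_j$.

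For (i), every component of $G_i^k$ is a copy of $C_i$, and within a single $C_i$ the only subset whose induced subgraph is a cycle is the full vertex set: any proper nonempty subset omits at least one vertex of the cycle, removing the two incident edges, and therefore induces a disjoint union of paths. Consequently every color class in a valid $\chi_i(G_i^k;\lambda)$-coloring must coincide with an entire component, and a valid coloring is precisely an injection from the $k$ components into the palette of $\lambda$ colors. This gives $\lambda_{(k)}$ colorings when $\lambda \geq k$ and $0$ otherwise.

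For (ii), the same structural remark applies, but now the color class must induce $C_j$ with $j \neq i$. Within a single $C_i$ the possible induced subgraphs are the full cycle $C_i$ (wrong length) and disjoint unions of paths (not a cycle at all); none is isomorphic to $C_j$. Hence no color class exists, and the count is $0$.

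For (iii), the key point is that the component $C_i^*$ has a pendant vertex $v$ of degree $1$ in $\hat{G}_i^k$. Whatever color class $S$ contains $v$ satisfies $\deg_{G[S]}(v) \leq \deg_{\hat{G}_i^k}(v) = 1$, whereas every vertex of $C_j$ has degree $2$, so $G[S] \not\cong C_j$. Therefore whenever at least one $C_i^*$ is present, i.e. whenever $k \geq 2$, no valid coloring exists. The remaining case $k=1$, $i \neq j$ is immediate from (ii), since then $\hat{G}_i^1 = C_i = G_i^1$. No step should be the main obstacle; the lemma is labelled \emph{obvious} and the entire proof is essentially a careful unpacking of definitions once the connectedness remark is made.
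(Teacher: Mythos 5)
Your proof is correct. The paper states this lemma as ``obvious'' and supplies no proof, so there is nothing to compare against; your argument --- connectedness of $C_j$ confining each colour class to a single component, the observation that the only subset of $V(C_i)$ whose induced subgraph is a cycle is all of $V(C_i)$, and the degree-one pendant vertex of $C_i^*$ killing every colouring in case (iii) --- is exactly the intended unpacking, and in particular the pendant-vertex step correctly explains why part (iii), unlike Lemma~\ref{incomp}, needs no exclusion of the case $i=j+1$.
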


\begin{theorem}
\label{th:dp-incomparable-chrom}
For all $i \neq j$ the polynomials 
$\chi_i$ and $\chi_j$
are d.p.-incomparable, hence
there are infinitely many d.p.-incomparable graph polynomials of the form $\chi_{\cC}$.
\end{theorem}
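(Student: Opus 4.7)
The plan is to mirror the proof of Theorem \ref{th:dp-incomparable}, but with Lemma \ref{le:chrom} now doing essentially all the combinatorial work, which makes the argument cleaner than its generating-function analogue: there is no awkward ``$i=j+1$'' exclusion to worry about here.

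Fix $i \neq j$, both at least $3$. To establish $\chi_i \not\leq_{d.p.} \chi_j$, I would exhibit two graphs on which $\chi_j$ agrees but $\chi_i$ disagrees; the natural candidates are $G_i^1 = C_i$ and $G_i^2$ (two disjoint copies of $C_i$). By Lemma \ref{le:chrom}(i), $\chi_i(G_i^1; X) = X$ and $\chi_i(G_i^2; X) = X(X-1)$ as polynomials (since these agree at every $\lambda \geq k$, they agree as polynomials), so $\chi_i$ separates $G_i^1$ from $G_i^2$. But by Lemma \ref{le:chrom}(ii), $\chi_j(G_i^k; \lambda) = 0$ for every $\lambda \in \N$ and every $k$, hence $\chi_j(G_i^1; X) = 0 = \chi_j(G_i^2; X)$, so $\chi_j$ identifies them. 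This is precisely the witness required for $\chi_i \not\leq_{d.p.} \chi_j$.

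For the reverse non-comparison, I would appeal to symmetry: swap the roles of $i$ and $j$ in the argument and use $G_j^1$, $G_j^2$ as the witnessing pair. The two non-comparisons together yield d.p.-incomparability of $\chi_i$ and $\chi_j$. Letting $i$ range over $\{3, 4, 5, \ldots\}$ then produces an infinite antichain $\{\chi_i\}_{i \geq 3}$ under $\leq_{d.p.}$, giving the final claim.

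There is no real obstacle; the only things to double-check are (a) that the identity of values at all $\lambda \in \N$ forces equality of polynomials (which holds because $\Z$ is infinite), and (b) that the parts (ii) and (iii) of Lemma \ref{le:chrom} actually cover the cases invoked. If desired, the graphs $\hat{G}_i^k$ from the previous proofs could be used as alternate witnesses via Lemma \ref{le:chrom}(iii), but they are not needed because the pair $(G_i^1, G_i^2)$ already suffices.
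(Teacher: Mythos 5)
Your proposal is correct and takes essentially the same approach as the paper: the same witness graphs $G_i^k$, the same Lemma \ref{le:chrom}, and the symmetric swap of $i$ and $j$ for the two non-comparisons (indeed, the paper's own second half already uses the pair $G_j^1$, $G_j^2$). The only, harmless, difference is that you use $(G_i^1, G_i^2)$ where the paper's first half uses $(G_i^2, \hat{G}_i^2)$, so your argument needs only parts (i) and (ii) of the lemma and avoids part (iii) and the hatted graphs altogether.
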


\begin{proof}
$\chi_i \not \leq_{d.p.} \chi_j$:
\\
We look at the graphs $G_i^2$ and $\hat{G}_i^2$.
By Lemma \ref{le:chrom} $\chi_j$ does not distinguish between $G_i^2$ and $\hat{G}_i^2$.
However, $\chi_i$ distinguishes between them.

To show that
$\chi_j \not \leq_{d.p.} \chi_i$,
we look at the graphs $G_j^2$ and $\hat{G}_j^2$.
By Lemma \ref{le:chrom} $\chi_i$ does not distinguish between $G_j^1$ and $G_j^2$.
However, $\chi_j$ does distinguish between them.
\end{proof}


\subsection{Generating functions of a relation}
\label{se:relation}
If, instead of counting induced (spanning) subgraphs with a certain graph property $\cC$ ($\cD$),
we count $r$-ary relations with a property $\Phi(A)$, we get a generalization of both
the generating functions of induced (spanning) subgraphs.
Here the summation is
defined by
$$
\bP_{\Phi}(G;X) = \sum_{A \subseteq E(G): \Phi(A)} X^{|A|}.
$$

For example,
the generating matching polynomial, defined as
$$
m(G;X) = \sum_{A \subseteq E(G): \Phi_{match}(A)} X^{|A|}.
$$
can be written as
$$
m(G;X) = \sum_{A \subseteq E(G): G\langle A \rangle \in \cD_{match}} X^{|A|}
$$
with  $\cD_{match}$ being the disjoint union of isolated vertices and isolated edges.

However, not every graph polynomial
$\bP_{\Phi}(G;X)$
can be written as a generating function of induced (spanning) subgraphs.

Consider the graph polynomial
$$
DOM(G;X)  = \sum_{A \subseteq V(G): \Phi_{dom}(A)} X^{|A|}
$$ 
where $\Phi_{dom}(A)$ says that $A$ is a dominating set of $G$.

We compute:
\begin{gather}
DOM(K_2,;X) = 2X +X^2  \label{dom1}\\
DOM(E_2,;X) = X^2 \label{dom2}
\end{gather}

\begin{theorem}
\label{th:dominating}
\begin{enumerate}[(i)]
\item
There is no graph property $\cC$ such that
$$
DOM(G;X) = \bP_{\cC}^{ind}(G;X).
$$
\item
There is no graph property $\cD$ such that
$$
DOM(G;X) = \bP_{\cD}^{span}(G;X).
$$
\end{enumerate}
\end{theorem}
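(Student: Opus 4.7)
The plan is to exploit the explicit values (\ref{dom1}) and (\ref{dom2}) by isolating, in each of the two candidate generating-function forms, a coefficient (or the degree) that is forced by the structure of the sum and then observing that this forced value is incompatible with $DOM$ on the two tiny graphs $K_2$ and $E_2$.

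\textbf{Part (i).} My first step is to read off the coefficient of $X^{1}$ in the generating function $\bP_{\cC}^{ind}(G;X)$. By definition this coefficient equals
\[
|\{v\in V(G): G[\{v\}]\in \cC\}|.
\]
Every singleton induced subgraph is isomorphic to $K_{1}$, and $\cC$ is closed under isomorphism, so this coefficient is either $0$ (if $K_{1}\notin\cC$) or $n(G)$ (if $K_{1}\in\cC$). Now I compare with (\ref{dom1}) and (\ref{dom2}): in $DOM(K_{2};X)$ the coefficient of $X$ is $2=n(K_{2})$, forcing $K_{1}\in\cC$, whereas in $DOM(E_{2};X)$ the coefficient of $X$ is $0$ while $n(E_{2})=2$, forcing $K_{1}\notin\cC$. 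These two conclusions contradict one another, so no such $\cC$ exists.

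\textbf{Part (ii).} Here my plan is simpler: I use a degree argument on $E_{2}$. Since $E(E_{2})=\emptyset$, the sum defining $\bP_{\cD}^{span}(E_{2};X)$ ranges over the single subset $B=\emptyset$, giving
\[
\bP_{\cD}^{span}(E_{2};X) \in \{0,1\}
\]
depending only on whether $E_{2}\in\cD$. In particular its degree is at most $m(E_{2})=0$. But by (\ref{dom2}), $DOM(E_{2};X)=X^{2}$ has degree $2$, which is impossible.

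The arguments are short because the critical information is already packaged in the two evaluations on $K_{2}$ and $E_{2}$; the only thing to verify is that each candidate generating form constrains a low-degree coefficient (or the degree itself) in a way that the dominating polynomial violates. There is no real obstacle: the main conceptual point is choosing, for each form, the right invariant to read off. For induced subgraphs the natural invariant is the degree-one coefficient (since it is determined by whether $K_{1}$ lies in $\cC$), and for spanning subgraphs it is the overall degree (bounded by $m(G)$). No further machinery is needed.
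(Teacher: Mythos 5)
Your proof is correct and takes essentially the same approach as the paper: part (i) is the paper's argument verbatim in spirit, deriving the contradiction from the coefficient of $X$ in $DOM(K_2;X)$ and $DOM(E_2;X)$ via whether $K_1\in\cC$. The only difference is in part (ii), where you use the empty edge set of $E_2$ (so $\bP_{\cD}^{span}(E_2;X)\in\{0,1\}$ cannot equal $X^2$) while the paper bounds the coefficient of $X$ in $\bP_{\cD}^{span}(K_2;X)$ by $1$ against the value $2$ in $DOM(K_2;X)$ --- both are valid one-line arguments.
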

\begin{proof}
(i):
Assume, for contradiction, there is such a $\cC$, and that
$K_1 \in \cC$.
The coefficient of $X$ in 
$\bP_{\cC}^{ind}(E_2;X)$ is $2$ because $K_1 \in \cC$. 
However, the coefficient of $X$ in $DOM(E_2;X)$ is $0$, by equation
(\ref{dom2}), a contradiction.

Now, assume $K_1 \not \in \cC$.
The coefficient of $X$ in 
$\bP_{\cC}^{ind}(K_2;X)$ is $0$, because $K_1 \not \in \cC$.
However, the coefficient of $X$ in $DOM(K_2;X)$ is  $2$, by equation
(\ref{dom1}), another contradiction.

(ii):
Assume, for contradiction, there is such a $\cD$.
The coefficient of $X$ in 
$\bP_{\cD}^{span}(K_2;X)$ is  $\leq 1$, because $K_2$ has only one edge. 
However, the coefficient of $X$ in $DOM(K_2;X)$ is  $2$, by equation
(\ref{dom1}), a contradiction.
\end{proof}

We can use Equation (\ref{dom1}) also to show the following:

\begin{theorem}
\label{th:notchrom}
There is no graph property $\cC$ such that
$$
DOM(G;X) = \chi_{\cC}(G;X).
$$
\end{theorem}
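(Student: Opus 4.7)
The plan is to evaluate both polynomials at $X = 1$ and obtain a direct contradiction. By the definition of the generalized chromatic polynomial, $\chi_{\cC}(G; 1)$ counts the functions $f \colon V(G) \to \{1\}$ whose (unique nonempty) color class induces a graph in $\cC$. Since there is exactly one such function, namely the constant map, we always have $\chi_{\cC}(G; 1) \in \{0, 1\}$, regardless of the choice of $\cC$: the value is $1$ when $G \in \cC$ and $0$ otherwise.

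Applying this to $G = K_2$ gives $\chi_{\cC}(K_2; 1) \leq 1$. But by Equation (\ref{dom1}), $DOM(K_2; 1) = 2 \cdot 1 + 1^2 = 3$. Since $3 \notin \{0, 1\}$, the two polynomials cannot agree on $K_2$, so no graph property $\cC$ can satisfy $DOM(G; X) = \chi_{\cC}(G; X)$ for all $G$.

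The only step that requires any reflection is the uniform bound $\chi_{\cC}(G; 1) \leq 1$, which is immediate from the definition; everything else is pure arithmetic. In contrast to the proof of Theorem \ref{th:dominating}, no case distinction on whether $K_1 \in \cC$ is needed here, because the dependence on $\cC$ is completely absorbed into the $\{0, 1\}$-valuedness of $\chi_{\cC}(K_2; X)$ at $X = 1$. The main obstacle, if any, is merely to notice that the trivial evaluation at $X = 1$ already suffices and that one need not search for a more elaborate algebraic invariant distinguishing the two families of polynomials.
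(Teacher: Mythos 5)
Your proof is correct and follows essentially the same route as the paper: both evaluate at $X=1$ on $K_2$, observe that $\chi_{\cC}(K_2;1)\in\{0,1\}$ (the paper phrases this as a case split on whether $K_2\in\cC$, which you absorb into a single uniform bound), and contrast this with $DOM(K_2;1)=3$ from Equation (\ref{dom1}). Nothing is missing.
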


\begin{proof}
First we note that $\chi_{\cC}(G;1)=1$ iff $\chi_{\cC}(G;1) \neq 0$ iff $G \in \cC$.
\\
Assume that $K_2 \in \cC$.  Then we have, using Equation (\ref{dom1}),
$$
\chi_{\cC}(K_2;1) = 1 = DOM(K_2,1) =3,
$$
a contradiction.
\\
Assume that $K_2  \not \in \cC$.  then we have, using Equation (\ref{dom1}),
$$
\chi_{\cC}(K_2;1) = 0 = DOM(K_2,1) =3,
$$
another contradiction.
\end{proof}

\subsection{Determinant polynomials}

There are only two matrices associated with graphs which have been used to define
graph polynomials: the adjacency matrix and the Laplacian.
The two resulting determinant polynomials are d.p.-incomparable.
It is conceivable to to define other matrix presentations of graphs,
and ask when they give rise to d.p.-equivalent determinant polynomials. 
The characterization and recognition problem in this case amounts to the question
when the characteristic polynomial of a matrix is the
the characteristic polynomial arising from a graph.
However, in this paper we do not pursue this further.

\subsection{Characterizing d.p.-equivalence for special classes of graph polynomials}
Theorems \ref{th:char-1} and Proposition \ref{prop:char-2} and Proposition \ref{prop:counterexample}
show that d.p.-equivalence of $\cC$ and $\cC_1$, respectively $\cD$ and $\cD_1$, is not enough
to characterize d.p.-equivalence of generating functions or generalized chromatic polynomials
defined by $\cC$ and $\cD$.
Sometimes d.p.-equivalence of graph properties only implies and s.d.p.-equivalence 
of the corresponding graph polynomials.

\begin{problem}
Characterize d.p.-equivalence of graph polynomials arising from
$\cC$ and $\cD$ as
\begin{enumerate}
\item
generalized chromatic polynomials;
\item
generating functions of induced are spanning subgraphs;
\item
generating functions of relations.
\end{enumerate}
\end{problem}

\ifskip\else
\marginpar{Should we add something?}
\begin{framed}

Should we add something like this:

\begin{theorem}[Kind of....]
Given two graph polynomials 
$\bP_1(G;X)$ and 
$\bP_2(G;X)$, and a family of graphs
$G_n$ such that on $(G_n)_{n \in \N}$ 
$\bP_1(G;X)$ and 
$\bP_2(G;X)$ are d.p.-equivalent, \
and the sequence $\bP_1(G_n;X)$
is C-finite.
then there is a graph polynomial $\bP_3$ d.p.-equivalent to  $\bP_2(G;X)$
and the sequence $\bP_3(G_n;X)$
is C-finite.
\end{theorem}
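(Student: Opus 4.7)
The plan is to construct $\bP_3$ as a relabeling of $\bP_2$ that coincides with $\bP_1$ on the prescribed family $(G_n)$. By Proposition~\ref{th:dp}(i), whenever $\bP_3 = F\circ \bP_2$ with $F$ injective on the range of $\bP_2$, the polynomials $\bP_3$ and $\bP_2$ are d.p.-equivalent. I would allow $\bP_3$ to carry one additional indeterminate $Z$, which provides enough room to define $F$ concretely.

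The hypothesis that $\bP_1$ and $\bP_2$ are d.p.-equivalent on $(G_n)$ says precisely that the partial map $\phi:\bP_2(G_n;X)\mapsto \bP_1(G_n;X)$ is well defined and injective from $\{\bP_2(G_n;X):n\in\N\}$ onto $\{\bP_1(G_n;X):n\in\N\}$. I would then extend $\phi$ to a total injection $F:\Z[X]\to\Z[X,Z]$ by setting
\[
F(p) \;=\;
\begin{cases}
\phi(p), & p \in \{\bP_2(G_n;X):n\in\N\}, \\
p + XZ, & \text{otherwise}.
\end{cases}
\]
Every value produced by the first branch lies in $\Z[X]$, while every value produced by the second branch contains the monomial $XZ$; hence the two branches map into disjoint sets and $F$ is an injection on all of $\Z[X]$. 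Setting $\bP_3(G;X,Z):=F(\bP_2(G;X))$ gives an isomorphism-invariant function, hence a graph polynomial, d.p.-equivalent to $\bP_2$ by the injectivity of $F$ on the range of $\bP_2$. By construction $\bP_3(G_n;X,Z)=\bP_1(G_n;X)$ for every $n\in\N$, so any linear recurrence with constant coefficients in $\Z[X]$ witnessing C-finiteness of $\bP_1(G_n;X)$ also witnesses C-finiteness of $\bP_3(G_n;X,Z)$ in $\Z[X,Z]$, since $\Z[X]\subseteq\Z[X,Z]$.

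The main obstacle is arranging the extension $F$ so that its two branches do not accidentally collide on some value of $\bP_2$; introducing the auxiliary indeterminate $Z$ resolves this cleanly, which is why the conclusion produces $\bP_3$ with one more indeterminate than $\bP_2$ rather than the same number. Everything else is bookkeeping: the d.p.-equivalence hypothesis on $(G_n)$ is exactly what makes $\phi$ well defined and injective, and C-finiteness is preserved under the inclusion of polynomial rings.
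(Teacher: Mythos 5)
Your argument is correct as a proof of the literal statement, but there is nothing in the paper to compare it against: this ``theorem'' appears only inside a commented-out draft note (excluded from the compiled text by the \verb|\ifskip| switch), explicitly flagged as tentative (``Kind of\ldots'', ``Should we add something like this?''), and the authors give no proof, remarking only that ``there is a bit of freedom what we have to require on the special form of the three graph polynomials.'' Your construction settles the statement as stated: the hypothesis of d.p.-equivalence on the family is exactly what makes the partial map $\phi:\bP_2(G_n;X)\mapsto\bP_1(G_n;X)$ well defined and injective, the auxiliary indeterminate $Z$ cleanly separates the two branches of $F$ so that $F$ is injective on all of $\Z[X]$ (hence on the range of $\bP_2$, giving d.p.-equivalence of $\bP_3$ and $\bP_2$ via Proposition~\ref{th:dp}(i) in both directions), and $\bP_3(G_n;X,Z)=\bP_1(G_n;X)$ inherits the recurrence with coefficients in $\Z[X]\subseteq\Z[X,Z]$. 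The one caveat worth stating explicitly is that your $\bP_3$ is a purely formal relabeling of $\bP_2$: it does not belong to any of the structured classes the paper actually studies (generating functions of induced or spanning subgraphs, generalized chromatic polynomials, determinant polynomials), and the authors' hesitation evidently concerns whether $\bP_3$ can be taken in the same ``special form'' as $\bP_1$ and $\bP_2$. Your proof shows the unconstrained version is almost trivially true, which arguably explains why the authors did not find it worth including; the interesting open question, which your argument does not touch, is the constrained one.
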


There is a bit of freedom what we have to require on the special form of
the three graph polynomials $\bP_i(G;X)$ for $i=1,2,3$.
\end{framed}
\fi 

\ifsubmit\else
\begin{framed}
\noindent
File: NREV-conclu
\\
Label:  se:conclu
\end{framed}
\fi 
\section{Conclusions and open problems}
In the light of our general framework to study Wilf's characterization and recognition problems
for graph graph polynomials,
we have shown how to characterize the instances of  a graph polynomial $\bP(G;\bX)$
of various indexed sequences of graphs $G_{i}$ or $G_{i,j}$ using C-finite 
sequences of polynomials on $\Z[\bX]$.
Our method works for many graph polynomials and indexed sequences of graphs
as described in the general framework of
\cite{ar:FischerMakowsky08}, provided that each graph in the indexed sequence
is $\bP$-unique.
This improves the characterization of the instances of the defect matching polynomial
given in \cite{ar:GodsilGutman1981}, and generalizes it to infinitely many other graph polynomials
and indexed sequences of graphs.
It also shows that for graph polynomials given as generating functions of induced or spanning
subgraphs with a given property, C-finiteness is a semantic property.
It remains unclear, whether this also applies to generalized chromatic polynomials.

However, this approach to the algebraic characterization of graph properties,
as envisaged by the late Herbert Wilf in \cite{ar:Wilf1973}, is just a very small step
forward. The characterization of the polynomials which are instances of
the prominent graph polynomials,
the matching, chromatic or characteristic polynomials, remains wide open.

In the final section
we also briefly discussed whether and when such a characterization found for a graph polynomial
$\bP$ sheds light on other graph polynomials which are d.p.- or s.d.p-equivalent to $\bP$.
In forthcoming paper we shall discuss d.p.- or s.d.p-equivalence of graph polynomials
from a logical point of view, \cite{ar:KotekMakowskyRavve-APAL}.

\subsection*{Acknowledgements}

We would like to thank 
I. Averbouch and V. Rakita of our research seminar for their useful comments
during presentation of this material,
and
the referees of an earlier version of the paper, for their constructive comments.

\small
\section*{References}
\newcommand{\etalchar}[1]{$^{#1}$}

%
\end{document}

\end{document}